\definecolor{darkgreen}{rgb}{0,0.5,0}
\definecolor{darkred}{rgb}{0.7,0,0}
\theoremstyle{plain}
\newtheorem{lemma}{Lemma}[section]
\newtheorem{thm}[lemma]{Theorem}
\newtheorem{prop}[lemma]{Proposition}
\newtheorem{cor}[lemma]{Corollary}
\theoremstyle{definition}
\newtheorem{rmk}[lemma]{Remark}
\numberwithin{equation}{section}
\newcommand{\ga}{\gamma}
\newcommand{\de}{\delta}
\newcommand{\Om}{\Omega}
\newcommand{\la}{\lambda}
\newcommand{\La}{\Lambda}
\newcommand{\si}{\sigma}
\newcommand{\Si}{\Sigma}
\renewcommand{\th}{\theta}
\newcommand{\Th}{\Theta}
\newcommand{\Ups}{\Upsilon}
\newcommand{\R}{\ensuremath{{\mathbb R}}}
\newcommand{\N}{\ensuremath{{\mathbb N}}}
\newcommand{\C}{\ensuremath{{\mathbb C}}}
\newcommand{\Hyp}{\ensuremath{{\mathbb H}}}
\DeclareMathOperator{\inj}{inj}
\newcommand{\norm}[1]{\Vert#1\Vert}  
\newcommand{\bnorm}[1]{\big\Vert#1\big\Vert}
\newcommand{\arsinh}{{\rm arsinh}}
\newcommand{\beq}{\begin{equation}}
\newcommand{\eeq}{\end{equation}}
\newcommand{\beqw}{\begin{equation*}}
\newcommand{\eeqw}{\end{equation*}}
\newcommand{\beqs}{\begin{equation*}}
\newcommand{\eeqs}{\end{equation*}}
\newcommand{\beqa}{\begin{equation}\begin{aligned}}
\newcommand{\eeqa}{\end{aligned}\end{equation}}
\newcommand{\beqas}{\begin{equation*}\begin{aligned}}
\newcommand{\eeqas}{\end{aligned}\end{equation*}}
\newcommand{\brmk}{\begin{rmk}}
\newcommand{\ermk}{\end{rmk}}
\newcommand{\partref}[1]{\hbox{(\csname @roman\endcsname{\ref{#1}})}}
\newcommand{\half}{\frac{1}{2}}
\newcommand{\dist}{\text{dist}}
\renewcommand{\i}{\mathrm{i}}
\newcommand{\pt}{\partial_t}
\newcommand{\M}{\ensuremath{{\mathcal M}}_{-1}}
\newcommand{\abs}[1]{\vert#1\vert} 
\newcommand{\babs}[1]{\left| #1\right|}
\newcommand{\eps}{\varepsilon}
\newcommand{\na}{\nabla}
\newcommand{\Hol}{{\mathcal{H}}} 
\newcommand{\Col}{\mathcal{C}}
\newcommand{\thin}{\text{-thin}}
\newcommand{\thick}{\text{-thick}}
\newcommand{\Rea}{\mathrm{Re}\,}
\newcommand{\ov}{\overline}
\newcommand{\supp}{\text{supp}}
\newcommand{\ddt}{\frac{d}{dt}}
\newcommand{\thalf}{\tfrac12}
\newcommand{\Area}{\text{Area}}
\newcommand{\define}{\mathrel{\mathrm {:=}}}
\title
{{\sc
Holomorphic quadratic differentials dual to Fenchel-Nielsen coordinates
}
\\ 
}
\author{Nadine Gro{\ss}e and Melanie Rupflin}
\begin{document}

\begin{abstract}
We discuss bases of the space of holomorphic quadratic differentials that are dual to the differentials of Fenchel-Nielsen coordinates and hence appear naturally when considering functions on the set of
hyperbolic metrics which are invariant under pull-back by diffeomorphisms, such
as eigenvalues of the Laplacian. The precise estimates derived in the current paper form the basis for the proof of the sharp eigenvalue estimates on degenerating surfaces obtained in \cite{eigenvalue}.
\end{abstract}
\maketitle
\vspace{-1cm}
\section{Introduction and results}\label{sect:intro}
Let $M$ be a closed oriented surface of genus $\gamma\geq 2$ (always assumed to be connected) and let $\M$ be the set of smooth hyperbolic metrics on $M$. We recall that the tangent space to $T_g\M$ splits orthogonally
$$T_g\M= \{L_Xg, X\in \Gamma(TM)\}\oplus H(g)$$
into the directions generated by the pull-back by diffeomorphisms and the \textit{horizontal space}
$H(g)=\Rea(\Hol(M,g))$, which is given by the real part of the  complex vector space 
$$\Hol(M,g)\define \{\Psi\ |\ \text{ holomorphic quadratic differentials on } (M,g)\}$$
whose (complex) dimension is $3(\gamma-1)$. 

In the present paper we analyse bases of $H(g)$ which appear naturally if one studies functions $f\colon\M\to\R$ that are defined in terms of geometric properties of $(M,g)$, such as eigenvalues of differential operators on $(M,g)$, as such functions are of course invariant under the pull-back by diffeomorphisms. It is natural to view any such $f$ as a function of the Fenchel-Nielsen coordinates $\{\ell_j,\psi_j\}$ of $(M,g)$ whose definition we recall below. The derivatives of such a function with respect to the Fenchel-Nielsen coordinates are then determined in terms of the $L^2$-gradient  of $f$, which is itself an element of $H(g)$ as the above splitting is orthogonal, 
 and the dual bases $\{\La^j,\Psi^j\}$ to the differentials of the Fenchel-Nielsen coordinates that we consider in the present paper. 

To formulate our results,  we first recall that any hyperbolic surface $(M,g)$ can be decomposed
into pairs of pants by cutting along a family  $\{\si^j\}_{j=1}^{3(\gamma-1)}$ of pairwise disjoint simple closed geodesics and that the metric on a pair of pants is uniquely determined (up to pull-back by diffeomorphisms) by the lengths of its three boundary curves. Keeping track of how the decomposing geodesics were chosen, the hyperbolic metric $g$ on $M$ is hence determined (up to pull-back by diffeomorphisms) by the Fenchel-Nielsen coordinates $\{\ell_j,\psi_j\}_{j=1}^{3(\gamma-1)}$, consisting of the 
 length coordinates $\ell_j=L_g(\si^j)$ of the geodesics along which we cut,
and the  twist coordinates $\psi_j$  which describe how the pairs of pants are glued together along $\si^j$.

We then note that for a function $f\colon\M\to \R$ as considered above 
the derivatives of $f$ with respect to the Fenchel-Nielsen coordinates are given by 
\beq \label{eq:derivative} \frac{\partial f}{\partial \ell_j}=\langle \nabla f, \Rea \Lambda^j\rangle\ \text{ and }  \frac{\partial f}{\partial \psi_j}=\langle \nabla f, \Rea \Psi^j\rangle\eeq
where 
 $\Psi^j, \La^j$ of $\Hol(M,g)$ are the elements of $\Hol(M,g)$ that are 
 dual to the (real differentials of the) Fenchel-Nielsen coordinates $\psi_j, \ell_j$ in the sense that for every $i,j$
\beq \label{def:Psi_La}
d\ell_j(\Rea(\La^i))=\de_j^i=d\psi_j(\Rea(\Psi^i)) \text{ and } 
d\psi_j(\Rea(\La^i))=0=d\ell_j(\Rea(\Psi^i)).
\eeq 
Here and in the following $d\ell_j\colon T_g\M\to \R$ is
given by $d\ell_j(k)=\frac{d}{d\eps}|_{\eps=0} L_{g_\eps}(\si^j(\eps))$ where  $g_\eps$ is a smooth curve of metrics in $\M$ so that $\partial_\eps\vert_{\eps=0}g_\eps=k$
 and  $\si^j(\eps)$ is the unique simple closed geodesic in $(M,g_\eps)$ homotopic to $\si^j$, with $d\psi_j$ defined in the same way. 

The main results of this paper, i.e. Theorem \ref{thm:La} and \ref{thm:Dehn-twists-main}, yield precise estimates on this dual basis of $H(g)$. In situation where the $L^2$-gradient of $f$ is known, e.g. in the case of simple eigenvalues of the Laplacian  compare \cite[Lemma 2.4]{eigenvalue}, such estimates can be combined with properties of $\na f$ to study the dependences of $f$ on the geometry of $(M,g)$. In particular, the results we prove in the current paper  play a crucial role in our proof of sharp estimates on the principal eigenvalue on degenerating hyperbolic surfaces in \cite{eigenvalue}.

Before we turn to our analysis of $\{\La^j,\Psi^j\}$, it is useful to first consider a related basis of $\Hol(M,g)$ 
which is dual to the \textit{complex} differentials of the 
length coordinates 
$$\partial \ell_j\colon\Hol(M,g)\to \C,$$
which were introduced in \cite[Remark 4.1]{RT-neg} and \cite{Wolpert82} as follows:
We view $\Hol(M,g)$ as real vector space with complex structure $J$ and identify $\Hol(M,g)$ with a subspace of $T_g\M$ via the isomorphism $\Phi\mapsto \Rea(\Phi)$ and define
\beq 
\partial \ell_j(\Phi)\define \thalf \big(d\ell_j(\Phi)-\i d\ell_j(J\Phi)\big)
\label{def:dell} \eeq

As \cite[Theorem~3.7]{Wolpert82} assures that $\Upsilon\mapsto (\partial\ell_1,\ldots, \partial \ell_{3(\gamma-1)})$ is an isomorphism from $\Hol(M,g)$ to $\C^{3(\gamma-1)}$,  we may consider the corresponding dual basis $\{\Th^j\}$ of $\Hol(M,g)$, and the corresponding renormalised basis $\{\Om^j\}$, which are 
characterised by 
\beq 
\label{def:Om-by-Th}
\partial\ell_j(\Th^i)=\de_j^i, \text{ respectively } 
\Om^j\define -\frac{\Th^j}{\norm{\Th^j}_{L^2(M,g)}}.
\eeq

 \begin{prop} \label{prop:RT-new} Let $(M,g)$ be any closed oriented hyperbolic surface of genus $\gamma\geq 2$ and let $\mathcal{E}=\{\si^1, \ldots, \si^{3(\gamma-1)}\}$ be any set of simple closed geodesics that decompose $(M,g)$ into pairs of pants and let  
 $\eta\in (0,\arsinh(1))$ and $\bar L<\infty$ be so that 
\begin{align}
 \mathcal{E}\ \textrm{contains all simple closed geodesics\ } \si \textrm{ of } (M,g) 
 \textrm{ of length } L_g(\si)\leq 2\eta\label{ass:eta}
\end{align}
and
\begin{align}\label{ass:upperbound}
\ L_g(\si)\leq \bar L \textrm{ for every } \si\in \mathcal{E}.
\end{align} 
 Then, there exist constants 
 $C$ and $\eps_1>0$ that depend only on the genus, $\eta$ and $\bar L$ so that for every $j=1,\ldots, 3(\gamma-1)$, 
 the  elements 
 $\{\Om^j\}_{j=1}^{3(\gamma-1)}$ defined in \eqref{def:Om-by-Th} above, satisfy
\begin{align}
\label{est:RT-neg1-new}
 \Vert  {\Om}^j\Vert_{L^\infty(M\setminus \Col( {\si}^j), g)} + 
 \Vert  {\Om}^j - b_0( {\Om}^j, \Col( {\si}^j))dz^2\Vert_{L^\infty(\Col (\sigma^j), g)} &\leq C \ell_j^{3/2},\\
  \label{est:RT-neg3-new}
1-C\ell_j^3\leq b_0( {\Om}^j, \Col( {\si}^j)) \Vert dz^2\Vert_{L^2(\Col( {\si}^j),g)} &\leq \, 1,\\
  \label{est:RT-neg4-new}
 |\langle  {\Om}^i,  {\Om}^j\rangle_{L^2(M,g)}|&\leq \, C\ell_i^{3/2} \ell_j^{3/2} \text{ for every } i\neq j
\end{align} 
as well as 
\beq
\label{est:RT-neg3-newest}
b_0( {\Om}^j, \Col( {\si}^j)) \Vert dz^2\Vert_{L^2(\Col( {\si}^j),g)}\geq \eps_1>0.
\eeq

Here $\Col( {\si}^j)\subset M$ denotes the collar around the geodesic $\si^j$, cf. Lemma \ref{lemma:collar}, and $b_0(\cdot,\Col(\si^j))dz^2$ denotes the principal part of the Fourier expansion \eqref{eq:Laurent} on the collar $\Col(\si^j)$.
\end{prop}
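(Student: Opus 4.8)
The strategy is to identify the dual basis $\{\Th^j\}$ with the basis studied by Wolpert \cite{Wolpert82} and by Rupflin--Topping \cite{RT-neg}, for which estimates of the type \eqref{est:RT-neg1-new}--\eqref{est:RT-neg4-new} are essentially known, and then to track the normalisation constants carefully enough to also obtain the two-sided bound \eqref{est:RT-neg3-newest}. First I would recall the collar lemma (Lemma \ref{lemma:collar}) and the Fourier/Laurent expansion \eqref{eq:Laurent} of a holomorphic quadratic differential $\Psi$ on a collar $\Col(\si^j)$ in terms of the natural collar coordinate, so that $b_0(\Psi,\Col(\si^j))dz^2$ is the zero-mode, which is the only mode that fails to decay as $\ell_j\to 0$. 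The key classical input is the Gardiner/Wolpert formula expressing $\partial\ell_j(\Phi)$ as an integral of $\Phi$ over the collar, which shows that $\partial\ell_j(\Phi)$ is, up to a universal constant and an error controlled by a power of $\ell_j$, equal to $b_0(\Phi,\Col(\si^j))$ times an explicit factor. In particular, applying this to $\Phi=\Th^i$ and using \eqref{def:Om-by-Th} gives that $b_0(\Th^j,\Col(\si^j))$ is comparable to $\ell_j^{-2}$ (after accounting for $\norm{dz^2}_{L^2(\Col(\si^j),g)}\sim \ell_j^{-1/2}$ up to constants depending on $\eta,\bar L$), while the off-diagonal zero-modes $b_0(\Th^j,\Col(\si^i))$ for $i\neq j$ are of lower order.

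Next I would set up the linear algebra. Let $\tilde\Om^j$ be the "model" differentials that on $\Col(\si^j)$ equal (a normalised version of) $dz^2$ and are small elsewhere — these are the building blocks used in \cite{RT-neg} to approximately invert the map $\Upsilon\mapsto(\partial\ell_j)$. One shows $\partial\ell_i(\tilde\Om^j)=c\,\de_i^j+O(\ell^{3/2})$-type errors, so the Gram-type matrix relating $\{\tilde\Om^j\}$ to $\{\Th^j\}$ is a small perturbation of a diagonal matrix; inverting it expresses $\Th^j$, and hence $\Om^j$ after the $L^2$-normalisation in \eqref{def:Om-by-Th}, as $\tilde\Om^j$ plus a correction of size $O(\ell_j^{3/2})$ in $L^\infty(M\setminus\Col(\si^j))$ and in $L^\infty$ on each collar after subtracting its own zero-mode. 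This yields \eqref{est:RT-neg1-new}. The estimate \eqref{est:RT-neg3-new} then follows because $\Om^j$ has unit $L^2$-norm by construction and the $L^2$-mass of $\Om^j$ is concentrated, up to an $O(\ell_j^3)$ relative error, in the zero-mode on $\Col(\si^j)$: subtracting the zero-mode removes almost all the $L^2$-norm, so $b_0(\Om^j,\Col(\si^j))\norm{dz^2}_{L^2(\Col(\si^j),g)}$ is $1-O(\ell_j^3)$ and at most $1$ by Pythagoras. The cross term \eqref{est:RT-neg4-new} follows by splitting $\langle\Om^i,\Om^j\rangle_{L^2}$ over $\Col(\si^i)$, $\Col(\si^j)$ and the rest: on $\Col(\si^i)$ the differential $\Om^j$ is $O(\ell_j^{3/2})$ pointwise while $\Om^i$ is $L^2$-bounded, and symmetrically, giving the product bound $C\ell_i^{3/2}\ell_j^{3/2}$.

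Finally, for the lower bound \eqref{est:RT-neg3-newest} I would argue that $b_0(\Om^j,\Col(\si^j))\norm{dz^2}_{L^2(\Col(\si^j),g)}$ cannot be too small: since $\norm{\Om^j}_{L^2(M,g)}=1$ and, by \eqref{est:RT-neg1-new} together with the control of collar geometry via $\eta$ and $\bar L$, the contribution of $M\setminus\Col(\si^j)$ and of the non-zero Fourier modes on $\Col(\si^j)$ to the $L^2$-norm is bounded above by $C\ell_j^3\leq C\bar L^{-?}$... more robustly, one uses that these contributions are $o(1)$ as $\ell_j\to 0$ and uniformly bounded away from $1$ for $\ell_j$ in the compact range $[\cdot,2\eta]$, an estimate that holds by the (uniform) non-degeneracy of the isomorphism in \cite[Theorem 3.7]{Wolpert82} on the set of hyperbolic surfaces with pants decomposition lengths bounded above by $\bar L$ and systole controlled by $\eta$; hence the zero-mode carries a definite fraction $\eps_1>0$ of the total mass. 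The main obstacle I anticipate is precisely this last uniformity: making the constants $C$ and $\eps_1$ depend only on $\gamma,\eta,\bar L$ (and not on the individual surface) requires a compactness argument — either a direct quantitative version of Wolpert's isomorphism, or a contradiction/compactness argument using Mumford-type compactness for the moduli of hyperbolic surfaces with bounded pants-decomposition lengths — to upgrade the pointwise-on-moduli-space statements to uniform ones, and to handle the transition between the "thin" regime $\ell_j\to 0$, where explicit collar estimates dominate, and the "thick" regime, where one relies on compactness.
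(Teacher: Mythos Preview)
Your proposal identifies the right ingredients but the logical structure has a gap that the paper explicitly flags. The model differentials $\tilde\Om^j$ from \cite{RT-neg} (recalled in Lemma~\ref{lemma:RT-neg}) are only available for the \emph{short} geodesics, those with $\ell_j\leq 2\eta_1$ for some small $\eta_1=\eta_1(\gamma)$; there are only $k\leq 3(\gamma-1)$ of them and they span $W_{\eta_1}^\perp$, not all of $\Hol(M,g)$. Your matrix-inversion step therefore cannot produce all of the $\Th^j$ directly. Even if one manufactured candidate $\tilde\Om^j$ for every $j$, the claim that the Gram matrix is a small perturbation of a diagonal matrix fails when some $\ell_j$ are of order one: the $O(\ell^{3/2})$ off-diagonal errors are then not small, and nothing you have written rules out near-degeneracy of the matrix. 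Remark~\ref{rmk:inverse} in the paper makes exactly this point --- obtaining quantitative control on the inverse of such a matrix is \emph{equivalent} to the main step of the proof, not a consequence of the perturbative estimates.

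The paper's order is the reverse of yours. It \emph{first} proves the lower bound $b_0(\Om^j,\Col(\si^j))\geq\eps_0$ for geodesics with $\ell_j\geq\ell_0$ via a contradiction/Deligne--Mumford compactness argument (Lemma~\ref{lemma:lower-est-princ-part}, using the convergence of the spaces $W_i$ from \cite{RTZ}). This lower bound is then used as \emph{input} to compare $\Om^j$ with $\tilde\Om^j$ for the short geodesics (Lemma~\ref{lemma:Om}): writing $\Om^j=a_j\big(\tilde\Om^j+\sum_{m>k}c_m^j\Om^m\big)$, where the sum runs over the \emph{long} geodesics, one needs $b_0(\Om^m,\Col(\si^m))\geq\eps_0$ precisely in order to bound $|c_m^j|$. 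Only after this comparison do \eqref{est:RT-neg1-new}--\eqref{est:RT-neg4-new} follow for short $\si^j$; for long $\si^j$ they hold trivially since $\ell_j^{3/2}$ is bounded below. So the compactness argument you correctly anticipate, but defer to \eqref{est:RT-neg3-newest}, is in fact the starting point of the whole proof, not the last step. (A minor scaling slip: by \eqref{eq:dell-c} one has $b_0(\Th^j,\Col(\si^j))=-\ell_j/\pi^2$, and by \eqref{est:sizes_on_collars} $\norm{dz^2}_{L^2(\Col(\si^j))}\sim\ell_j^{-3/2}$, not $\ell_j^{-1/2}$.)
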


We remark that bases of the space of holomorphic quadratic differentials which are related to Fenchel-Nielsen and other choices of coordinates on Teichm\"uller space have been considered by many authors and we refer in particular to the
works of 
Masur \cite{Masur}, Yamada \cite{Yamada1,Yamada2}, Wolpert ~\cite{Wolpert82, WII, Wolpert03, Wolpert08, Wolpert12} and the recent work of Mazzeo-Swoboda \cite{Mazzeo-Swoboda} and the references therein for an overview of existing results.
For the present work a comparison with the paper \cite{Wolpert82, WII, Wolpert08, Wolpert12}  of Wolpert, the work \cite{Masur} of Masur and the previous joint work \cite{RT-neg} of Topping and the second author is particularly relevant. The above Proposition \ref{prop:RT-new} can be seen as an analogue of Wolpert's Theorem 8 and Corollary 9 of \cite{Wolpert12}, see also \cite[Lemma~3.12]{Wolpert08}, on the behaviour of a gradient basis, but in the setting of dual bases. To be more precise,
in \cite[Corollary~9]{Wolpert12}, see also \cite[Lemma~3.12]{Wolpert08}, Wolpert proves error estimates for the elements $\text{grad }\ell_j$ which represent the gradient of the length functionals $d\ell_j$, i.e.~are characterised by $d \ell_j(k)=\langle \text{grad }\ell_j,k\rangle$ for any $k\in H(g)=\Rea(\Hol(M,g))$, which have the same optimal order of errors as our estimates (amounting to error rates of $O(\ell_j^{3/2})$ when elements are normalised to have unit $L^2$-norm).  The motivation of the current paper is not to improve or reprove results on such gradient bases, which have been very influential in the study of Teichm\"uller space, but rather to develop the relevant results for dual bases, first for the complex differentials $\partial \ell_j$ and then, more importantly, for the real differentals $(d\ell_j,d\psi_j)$.

We stress that it is control on dual bases, rather than gradient bases, that is essential for the applications that motivate the present paper, namely studying the dependence of functions, such as eigenvalues, on the Fenchel-Nielsen coordinates, compare \eqref{eq:derivative} and see \cite{eigenvalue} for a first instance of such an application. 

We also remark that while we could have tried to derive bounds on the dual basis from bounds on the corresponding gradient basis as obtained in \cite{Wolpert08, Wolpert12}, any such proof would require quantative control on the inverse of the matrix formed by all the inner products $\langle \text{grad } \ell_j, \text{grad } \ell_k\rangle$ of the elements of the gradient basis and, as we shall explain further in Remark \ref{rmk:inverse}, obtaining the necessary control on this inverse is essentially equivalent to the main step of our proof of Proposition \ref{prop:RT-new}. We instead base our proof of Proposition \ref{prop:RT-new} on the results obtained in the joint work \cite{RT-neg} of Topping and the second author that we recall in  Lemma \ref{lemma:RT-neg} below, 
where the above estimates were proven for elements of the corresponding  dual basis $\tilde \Om^1,\ldots,\tilde\Om^k$ of $(\ker(\partial\ell_1, \ldots, \partial\ell_k))^\perp$, for $\si^1,\ldots,\si^j$ 
the geodesics of $(M,g)$ whose length is no more than a sufficiently small constant. 
We also note that while some dual bases of coordinates had already been considered by Masur, the results of \cite{Masur} do not yield the quantitative control on the dual basis that is needed in applications such as \cite{eigenvalue}.

We note that  Proposition \ref{prop:RT-new} implies in particular that 
 for every $\de>0$ there exists $C_\de=C(\de, \eta, \bar L, \gamma)$ so that 
\beq 
\label{est:Linfty_Om_thick}
\norm{\Om^j}_{L^\infty(\de\thick(M,g))}\leq C_\de\ell_j^{3/2} \text{ while }
\norm{\Om^j}_{L^\infty(M,g)}\leq C\ell_j^{-1/2},
\eeq
see \cite{RT-neg} for the analogue results on the $\tilde\Om^j$. Here we note that the second inequality in \eqref{est:Linfty_Om_thick} is obtained by combining the fact that $\norm{dz^2}_{L^{\infty}(\Col(\si^j))}\leq C\ell_j^{-2}\leq C\ell_j^{-1/2} \norm{dz^2}_{L^2(\Col(\si^j))}$, compare \eqref{est:sizes_on_collars} and \eqref{est:dz-lower}, with
 \eqref{est:RT-neg1-new} and  \eqref{est:RT-neg3-new}.
 
We also remark that the lower bound \eqref{est:RT-neg3-newest} is equivalent to an upper bound for the  elements $\Th^j$ of the original dual basis of $\norm{\Th^j}_{L^2(M,g)}\leq C\ell_j^{-\half}$, compare Lemma \ref{lemma:Om}, and hence that 
\beq 
 \label{est:Linfty_Om_global}
\norm{\Th^j}_{L^\infty(\de\thick(M,g))}\leq C_\de\ell_j. 
\eeq

We now turn back to the analysis of the elements $\{\La^j,\Psi^j\}$ which are dual to the real differentials of Fenchel-Nielsen coordinates and hence appear when considering the dependence of functions on Fenchel-Nielsen coordinates. We first remark that while by definition 
$d\ell_j(\tfrac12\Rea\Th^i)=\de_{j}^i=d\ell_j(\Rea\La^i) \text{ for every } 1\leq i,j\leq 3(\gamma-1)$, compare also \eqref{eq:length-evol} and \eqref{eq:dell-c}, the elements $\half\Rea\Th^i$ will in general not leave the twist coordinates invariant and hence not agree with $\Rea\La^i$. However, we shall see that the difference between these elements is only of order $O(\ell_j)=\norm{\La^j}_{L^2}\cdot O(\ell_j^{3/2})$, so that we also obtain error estimates such as the analogue of \eqref{est:RT-neg1-new} with this sharp error rate for the elements $\La^j$. To be more precise, we will show
\begin{thm}\label{thm:La}
 Let $(M,g)$ be any closed oriented hyperbolic surface of genus $\gamma$ and let $\mathcal{E}=\{\si^1, \ldots, \si^{3(\gamma-1)}\}$ be any decomposing set of simple closed geodesics. 
 Then there exists a constant $C$ that depends only on the genus and the numbers $\eta\in (0,\arsinh(1))$ and $\bar L<\infty$ for which \eqref{ass:eta} and \eqref{ass:upperbound} are satisfied so that 
the elements $\La^j$ which induce only a change of the length coordinate $\ell_j$ as specified in \eqref{def:Psi_La} are given by
\beqs
\label{eq:writing_La} 
\La^j=\tfrac12\Th^j+\sum_k \i \cdot c_k^j \Om^k
\eeqs
for coefficients $c_k^j\in \R$ which satisfy
\beq 
\label{est:coeff-lemma-La}\abs{c_k^j}\leq C\ell_j\ell_k^{3/2} \text{ for every } j,k=1,\ldots, 3(\gamma-1) .
\eeq
In particular 
\beq \label{est:La-minus-Th}
\norm{\La^j-\tfrac12\Th^j}_{L^\infty(M,g)}\leq C\ell_j.
\eeq
\end{thm}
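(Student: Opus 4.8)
\emph{Proof strategy.}
The plan is in two stages: first use the length half of the duality \eqref{def:Psi_La} to pin down the $\Th^j$--part of $\La^j$, then determine the remaining coefficients from the twist conditions. For the first stage, note that taking real and imaginary parts in the definition \eqref{def:dell} of $\partial\ell_i$ gives, for every $\Phi\in\Hol(M,g)$,
\beq\label{pf:dell-realimag}
 d\ell_i(\Rea\Phi)=2\,\Rea\big(\partial\ell_i(\Phi)\big),\qquad d\ell_i\big(\Rea(\i\Phi)\big)=-2\,\Ima\big(\partial\ell_i(\Phi)\big).
\eeq
Since $\partial\ell_i(\Th^j)=\delta_i^j$ and $\Om^k=-\Th^k/\norm{\Th^k}_{L^2}$ by \eqref{def:Om-by-Th}, \eqref{pf:dell-realimag} yields $d\ell_i(\Rea\Th^j)=2\delta_i^j$ and $d\ell_i(\Rea(\i\Om^k))=0$ for all $i,j,k$, so that the common kernel of $d\ell_1,\dots,d\ell_{3(\gamma-1)}$ on $\Hol(M,g)$ is exactly $\mathrm{span}_{\R}\{\i\Om^1,\dots,\i\Om^{3(\gamma-1)}\}$. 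As $\thalf\Th^j$ already satisfies the length conditions $d\ell_i(\Rea(\thalf\Th^j))=\delta_i^j$, the difference $\La^j-\thalf\Th^j$ must lie in this kernel, which is precisely the asserted form $\La^j=\thalf\Th^j+\sum_k\i\,c_k^j\Om^k$ with real $c_k^j$; the coefficients are then uniquely fixed by the remaining conditions $d\psi_i(\Rea\La^j)=0$, which, using $\Th^j=-\norm{\Th^j}_{L^2}\Om^j$, read
\beq\label{pf:system}
\sum_k c_k^j\,d\psi_i\big(\Rea(\i\Om^k)\big)=\thalf\norm{\Th^j}_{L^2}\,d\psi_i(\Rea\Om^j)\qquad\text{for } i=1,\dots,3(\gamma-1).
\eeq

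The real work is then to estimate the matrix $\mathcal A_{ik}:=d\psi_i(\Rea(\i\Om^k))$ and the right-hand side of \eqref{pf:system} precisely enough to solve it with the claimed bounds. Here I would use the first-variation formulas for the Fenchel--Nielsen coordinates along a holomorphic deformation $\Rea\Phi$, expressing $d\ell_i(\Rea\Phi)$ and $d\psi_i(\Rea\Phi)$ through the principal Fourier coefficient $b_0(\Phi,\Col(\si^i))$ of $\Phi$ on the collar $\Col(\si^i)$ (its real part governing the variation of $\ell_i$, its imaginary part that of $\psi_i$), up to an error controlled by $\norm\Phi$ away from $\Col(\si^i)$ and by the non-principal part of $\Phi$ on $\Col(\si^i)$, together with Proposition~\ref{prop:RT-new}. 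Two features are decisive. First, as the collars $\Col(\si^i),\Col(\si^k)$ are disjoint for $i\neq k$, \eqref{est:RT-neg1-new} bounds $\Om^k$ by $C\ell_k^{3/2}$ on $\Col(\si^i)$; using the holomorphy of $\Om^k$ throughout that collar together with the pointwise size of $dz^2$ near $\si^i$ one controls $b_0(\Om^k,\Col(\si^i))$, hence the off-diagonal entries of $\mathcal A$ and the $i\neq j$ components of the right-hand side. Second, and crucially, $b_0(\Om^j,\Col(\si^j))$ is \emph{real} by \eqref{est:RT-neg3-new}, so the would-be leading contribution to $d\psi_j(\Rea\Om^j)$, which is proportional to $\Ima\, b_0(\Om^j,\Col(\si^j))$, vanishes, and what remains is governed by the $O(\ell_j^{3/2})$ remainder in \eqref{est:RT-neg1-new}; this cancellation is what keeps the correction $\sum_k\i c_k^j\Om^k$ of order $\ell_j$ rather than larger. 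Combined with the lower bound \eqref{est:RT-neg3-newest} for the diagonal of $\mathcal A$ (reflecting that the principal part $\i\,dz^2$ genuinely twists its collar), this shows $\mathcal A$ is invertible with $\norm{\mathcal A^{-1}}$ bounded in terms of $\gamma,\eta,\bar L$, and solving \eqref{pf:system} gives $\abs{c_k^j}\leq C\ell_j\ell_k^{3/2}$, i.e.\ \eqref{est:coeff-lemma-La}. The bound \eqref{est:La-minus-Th} then follows at once, since by \eqref{est:Linfty_Om_thick} and $\ell_k\leq\bar L$,
\beq\label{pf:Linfty}
\norm{\La^j-\thalf\Th^j}_{L^\infty(M,g)}\leq\sum_k\abs{c_k^j}\,\norm{\Om^k}_{L^\infty(M,g)}\leq C\ell_j\sum_k\ell_k^{3/2}\,\ell_k^{-1/2}\leq C\ell_j.
\eeq

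I expect the estimation step above to be the main obstacle. Converting the variation formulas into bounds on $\mathcal A$ and on the right-hand side of \eqref{pf:system} sharp enough to yield the rate $\ell_j\ell_k^{3/2}$ (rather than a weaker power), and establishing invertibility of $\mathcal A$ with a uniformly controlled inverse, are both delicate: besides the cancellation coming from the reality of $b_0(\Om^j,\Col(\si^j))$ --- without which $c_j^j$, and hence $\La^j-\thalf\Th^j$, would be far too large --- one must control $\mathcal A^{-1}$ uniformly also when some of the $\ell_i$ are merely bounded above by $\bar L$, where the ``close to a rescaled identity'' picture breaks down and one has to argue via positivity or a compactness argument with constants depending on $\eta,\bar L$, and one likely needs to handle the coupled nature of \eqref{pf:system} by a bootstrap starting from a crude bound such as $\abs{c_k^j}\leq C\ell_j$.
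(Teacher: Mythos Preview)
Your reduction to the form $\La^j=\thalf\Th^j+\sum_k \i c_k^j\Om^k$ with $c_k^j\in\R$ is correct and matches the paper, as does the final step \eqref{pf:Linfty}. The gap is the ``first-variation formula'' you invoke for $d\psi_i(\Rea\Phi)$. Unlike the length case \eqref{eq:length-evol}, there is no formula expressing $d\psi_i(\Rea\Phi)$ as $\al_i\,\Ima b_0^i(\Phi)$ plus an error controlled only by the collar-decay part and the complement of $\Col(\si^i)$: the twist coordinate is a global gluing datum, not local data on the collar. What \emph{is} available is the exact identity $\langle\Upsilon,\Psi^i\rangle=8\pi\i\,b_0^i(\Upsilon)$ (equation \eqref{eq:inner-prod-Psi}), but this computes the pairing with the \emph{dual} element $\Psi^i$, not with $\text{grad}\,\psi_i$; passing from one to the other requires inverting a Gram matrix involving the unknown $\La^k$'s themselves, so the argument becomes circular. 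Wolpert's length--twist duality relates $\Psi^i$ to $\text{grad}\,\ell_i$, not $\text{grad}\,\psi_i$ to anything computable here. Without a concrete source for the entries of your matrix $\mathcal A$ and the right-hand side of \eqref{pf:system}, the plan does not close.

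The paper avoids evaluating $d\psi_i$ on holomorphic directions altogether. It constructs, via hyperbolic trigonometry on the hexagons of the pants adjacent to $\si^j$ (Lemma~\ref{lemma:hex}, Corollary~\ref{cor:h}), an explicit \emph{non-horizontal} tensor $h^j\in T_g\M$ with $d\ell_i(h^j)=\de_i^j$ and $d\psi_i(h^j)=0$, so that $\Rea\La^j=P_g^H(h^j)$. The point of the construction is that $h^j$ has the form $\xi_1(s)(ds^2-d\th^2)+\xi_2(s)(ds^2+d\th^2)$ on a large sub-collar $\Col_{c_5}(\si^j)$, while $\norm{h^j}_{L^\infty}\leq C\ell_j$ on the remainder of its support, which lies in a fixed thick part. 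One then controls $\norm{\La^j-\thalf\Th^j}_{L^2}^2$ through the inner products $\langle\Rea(\i\Om^k),h^j\rangle$: on $\Col_{c_5}(\si^j)$ these vanish identically (Fourier orthogonality in $\th$, using that $b_0^j(\Om^k)\in\R$), and off it they are $O(\ell_j\ell_k^{3/2})$ by \eqref{est:Linfty_Om_thick}. A quadratic inequality against \eqref{est:proof-c-initial} yields $\max_k\abs{c_k^j}\leq C\ell_j$, and pairing \eqref{eq:proof-lemma-La} with $\i\Om^k$ upgrades to $\abs{c_k^j}\leq C\ell_j\ell_k^{3/2}$. The geometric construction of $h^j$ is the substitute for your missing formula; it is where the real content of the proof lives.
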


For the elements $\Psi^j$ that generate only a Dehn-twist we recall that Wolpert's length-twist duality \cite[Theorem 2.10]{Wolpert82} establishes that 
$\Psi^j$ can be written in terms of the gradient of the corresponding length coordinate, which would give one route to obtain error estimates on these elements, by using the bounds on the $\text{grad }\ell_j$ proven in \cite{Wolpert08}. 
In applications, in particular to the study of eigenvalues as carried out in \cite{eigenvalue}, it is however very useful to be able to characterise the $\Psi^j$ in terms of 
the dual basis $\Om^j$ respectively $\Th^j$, as the principal parts of the $\Th^j$ are determined explicitly, namely $b_0(\Th^j,\Col(\si^i))=-\de^{ij} \frac{\ell_j}{\pi^2}$, compare \eqref{eq:dell-c}, and as
 this dual basis appears explicitly in the characterisation of the $L^2$-gradient of small eigenvalues on degenerating surfaces, with Theorem 2.5 of \cite{eigenvalue} e.g. establishing that 
$$\na \la\sim \frac{1}{8\pi \la} \Rea(\Th^1)$$
on surfaces with one degenerating disconnecting closed geodesic, compare also  
 Lemma \ref{lemma:Om}. We will hence furthermore prove  

\begin{thm}\label{thm:Dehn-twists-main}
 Let $(M,g)$ be any closed oriented hyperbolic surface of genus $\gamma$ and let $\mathcal{E}=\{\si^1, \ldots, \si^{3(\gamma-1)}\}$ be any decomposing set of simple closed geodesics. 
 Then there exists a constant $C$ that depends only on the genus and the numbers $\eta\in (0,\arsinh(1))$ and $\bar L<\infty$ for which \eqref{ass:eta} and \eqref{ass:upperbound} are satisfied so that the elements $\Psi^j$ which generate Dehn-twists as described in \eqref{def:Psi_La} are given by 
\beq
\label{eq:write-Psi-with-Om}
\frac{\Psi^j}{\Vert \Psi^j\Vert_{L^2(M,g)}}= -a_j \i\Om^j+\i\sum_{k\neq j} c_k^j \Om^k \text{ for some } a_j\in \R^+, c_{k}^j\in \R
\eeq
for coefficients
\beq
\label{est:coeff-Psi}
\abs{c_k^j}\leq C\ell_k^{3/2} \ell_j^{3/2} \text{ and } \abs{1-a_j}\leq C\ell_j^3,\eeq
in particular
  \begin{equation}\label{est:lemma-Psi1}
\Vert \tfrac{\Psi^j}{\Vert \Psi^j\Vert_{L^2(M,g)}} +\i \Om^j\Vert_{L^\infty (M,g)} \leq C\ell_j^{3/2},
 \end{equation}
where furthermore
 \begin{align}\label{est:lemma-Psi4}
  \left|  \Vert \Psi^j\Vert_{L^2(M,g)} - 8\pi \Vert dz^2\Vert_{L^2(\Col(\si^j),g)}^{-1}\right| \leq C\ell_j^{9/2}.
  \end{align}
\end{thm}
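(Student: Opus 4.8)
The plan is to reduce the characterisation \eqref{def:Psi_La} of $\Psi^j$ to the inversion of a Gram matrix in the basis $\{\Th^m\}$ by means of Wolpert's length--twist duality, and then to carry out this inversion with the sharp rates using Proposition \ref{prop:RT-new}. First, since $\partial\ell_i(\Th^m)=\de_i^m$, definition \eqref{def:dell} gives $d\ell_i(\Rea\Th^m)=2\de_i^m$ and $d\ell_i(\Rea(\i\Th^m))=0$ for all $i,m$; writing $\Psi^j$ in the basis $\{\Th^m\}$ and comparing real parts, the conditions $d\ell_i(\Rea\Psi^j)=0$ therefore force $\Psi^j=\i\sum_m t_m^j\Th^m$ with $t_m^j\in\R$. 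For the twist conditions I use that Wolpert's length--twist duality \cite[Theorem~2.10]{Wolpert82}, combined with the K\"ahler identity for the Weil--Petersson structure, yields $d\psi_i(\Rea(\i\Th^m))=\tfrac{c_0}{\ell_i}\langle\Rea\Th^i,\Rea\Th^m\rangle_{L^2(M,g)}$ for a universal constant $c_0>0$ (the factor $\ell_i^{-1}$ reflecting the normalisation of the twist coordinate). Hence $\de_i^j=d\psi_i(\Rea\Psi^j)=\tfrac{c_0}{\ell_i}\sum_m\tilde Q_{im}t_m^j$ with $\tilde Q_{im}=\langle\Rea\Th^i,\Rea\Th^m\rangle_{L^2(M,g)}$, so that $t^j=\tfrac{\ell_j}{c_0}\tilde Q^{-1}e_j$. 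Passing to $\Om^m=-\Th^m/\norm{\Th^m}_{L^2}$ and setting $\tilde P_{im}:=\langle\Rea\Om^i,\Rea\Om^m\rangle_{L^2(M,g)}$ (so $\tilde Q_{im}=\norm{\Th^i}_{L^2}\norm{\Th^m}_{L^2}\tilde P_{im}$), this rearranges to
\beqs \Psi^j=\frac{-\ell_j\,\i}{c_0\norm{\Th^j}_{L^2}}\sum_m (\tilde P^{-1})_{mj}\,\Om^m. \eeqs

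By \eqref{est:RT-neg4-new} and the normalisation $\norm{\Om^m}_{L^2}=1$ one has $\tilde P=\mu I+N$, where $\mu=\langle\Rea\Om^i,\Rea\Om^i\rangle_{L^2(M,g)}$ is a fixed positive constant and $N$ is symmetric with vanishing diagonal and $\abs{N_{im}}\leq C\ell_i^{3/2}\ell_m^{3/2}$. The key step is to show that $\tilde P$ is invertible with
\beqs (\tilde P^{-1})_{jj}=\mu^{-1}\big(1+O(\ell_j^3)\big),\qquad (\tilde P^{-1})_{jm}=O(\ell_j^{3/2}\ell_m^{3/2})\ \ (m\neq j), \eeqs
uniformly over the data permitted by \eqref{ass:eta} and \eqref{ass:upperbound}. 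Splitting $\{1,\dots,3(\gamma-1)\}=T\cup B$ with $T=\{j:\ell_j<\eps_0\}$ for a suitable $\eps_0=\eps_0(\gamma)$: the block $\tilde P_{TT}$ equals $\mu I$ plus a symmetric matrix of operator norm at most $C\sum_{j\in T}\ell_j^3<\mu/2$, so a Neumann series provides its inverse with the stated entrywise decay; the block $\tilde P_{BB}$ is the Gram matrix of the linearly independent vectors $\{\Rea\Om^j:j\in B\}$ and is uniformly invertible by a compactness argument over the moduli of admissible configurations (precompact once one passes to Deligne--Mumford limits, along which the $\Om^j$, $j\in B$, converge to linearly independent differentials on the limit surface); and a Schur-complement decomposition combines the two, the coupling block having entries of size $O(\ell_j^{3/2})$, $j\in T$. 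This is the step Remark \ref{rmk:inverse} refers to; equivalently, the required control on $\tilde Q^{-1}$ may be extracted from the proof of Proposition \ref{prop:RT-new}.

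It remains to assemble the estimates. Combining \eqref{est:RT-neg3-new}, $\Om^j=-\Th^j/\norm{\Th^j}_{L^2}$ and the explicit principal part $b_0(\Th^j,\Col(\si^j))=-\ell_j/\pi^2$ (see \eqref{eq:dell-c}) gives $\norm{\Th^j}_{L^2}=\tfrac{\ell_j}{\pi^2}\norm{dz^2}_{L^2(\Col(\si^j),g)}\big(1+O(\ell_j^3)\big)$, hence $\ell_j/\norm{\Th^j}_{L^2}=\pi^2\norm{dz^2}_{L^2(\Col(\si^j),g)}^{-1}\big(1+O(\ell_j^3)\big)$. Substituting this and the bounds on $\tilde P^{-1}$ into the formula above yields
\beqs \Psi^j=\frac{-\pi^2\,\i}{c_0\norm{dz^2}_{L^2(\Col(\si^j),g)}}\big(1+O(\ell_j^3)\big)\Big[\mu^{-1}\big(1+O(\ell_j^3)\big)\,\Om^j+\sum_{m\neq j}O(\ell_j^{3/2}\ell_m^{3/2})\,\Om^m\Big]. \eeqs
Using $\norm{\Om^m}_{L^2}=1$ and \eqref{est:RT-neg4-new} (the off-diagonal contributions to the $L^2$-norm of the bracket are $O(\ell_j^3)$ because $\sum_m\ell_m^3\leq C$), the bracket has $L^2$-norm $\mu^{-1}(1+O(\ell_j^3))$, so $\norm{\Psi^j}_{L^2(M,g)}=\tfrac{\pi^2}{c_0\mu}\norm{dz^2}_{L^2(\Col(\si^j),g)}^{-1}(1+O(\ell_j^3))$; the universal constant $\tfrac{\pi^2}{c_0\mu}$ equals $8\pi$ by the normalisation in Wolpert's duality (which one also reads off from \eqref{est:lemma-Psi4}, recalling that $\norm{dz^2}_{L^2(\Col(\si^j),g)}^{-1}$ is comparable to $\ell_j^{3/2}$, cf.\ \eqref{est:sizes_on_collars}, \eqref{est:dz-lower}), giving \eqref{est:lemma-Psi4}. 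Dividing $\Psi^j$ by $\norm{\Psi^j}_{L^2}$ then puts it in the form \eqref{eq:write-Psi-with-Om} with $a_j$ the coefficient of $-\i\Om^j$, which is positive (as $c_0>0$ and $\tilde P^{-1}$ is positive definite) and satisfies $\abs{1-a_j}\leq C\ell_j^3$, while $\abs{c_k^j}\leq C\ell_j^{3/2}\ell_k^{3/2}$, i.e.\ \eqref{est:coeff-Psi}. Finally \eqref{est:lemma-Psi1} is immediate: $\tfrac{\Psi^j}{\norm{\Psi^j}_{L^2}}+\i\Om^j=(1-a_j)\i\Om^j+\i\sum_{k\neq j}c_k^j\Om^k$ has $L^\infty$-norm at most $C\ell_j^3\norm{\Om^j}_{L^\infty}+\sum_{k\neq j}C\ell_j^{3/2}\ell_k^{3/2}\norm{\Om^k}_{L^\infty}\leq C'\ell_j^{3/2}$ by \eqref{est:Linfty_Om_thick}.

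The main obstacle is the second step: obtaining the sharp entrywise control $O(\ell_j^{3/2}\ell_k^{3/2})$ on $\tilde P^{-1}$ (equivalently on $\tilde Q^{-1}$), which as indicated in Remark \ref{rmk:inverse} is essentially the heart of the matter. The delicate points there are the block decomposition separating the degenerating from the non-degenerating geodesics, and establishing the uniform invertibility of the "thick" block $\tilde P_{BB}$; pinning down the universal constant in the last step so that \eqref{est:lemma-Psi4} comes out with exactly $8\pi$ is routine but has to be tracked carefully.
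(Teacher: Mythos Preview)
Your route via Wolpert's length--twist duality and inversion of the Gram matrix $\tilde P$ is genuinely different from the paper's, and could in principle be made to work, but as written it has two real gaps and is considerably less direct.

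The paper does not invert any Gram matrix. Instead it constructs the Dehn-twist deformation explicitly as a quadratic differential $K^j$ supported on $\Col(\si^j)$ (essentially the Lie derivative of $g$ in the angular direction) and computes, for every $\Upsilon\in\Hol(M,g)$,
\[
\langle \Upsilon,\Psi^j\rangle_{L^2}=\langle \Upsilon,K^j\rangle_{L^2}=8\pi\,\i\, b_0(\Upsilon,\Col(\si^j)).
\]
This single identity delivers at once (i) $\Psi^j\perp\ker(\partial\ell_j)=\mathrm{span}\{\Om^k:k\neq j\}$, so that $\Psi^j/\|\Psi^j\|_{L^2}$ is $-a_j\i$ times the normalised projection of $\Om^j$ onto $\ker(\partial\ell_j)^\perp$; (ii) the exact constant $8\pi$, from which \eqref{est:lemma-Psi4} follows in one line; and (iii) reality of all coefficients. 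The bounds \eqref{est:coeff-Psi} then come from estimating the \emph{single} projection $P_g^{\ker\partial\ell_j}(\Om^j)$ by $C\ell_j^{3/2}$ using \eqref{est:RT-neg1-new}, followed by one pass with \eqref{est:RT-neg4-new}; no block decomposition, compactness argument, or Schur complement is needed.

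The gaps in your argument are: first, your determination of the constant $8\pi$ is circular---you cannot read it off from the conclusion \eqref{est:lemma-Psi4} you are proving; it would have to be extracted from the precise normalisation in Wolpert's symplectic formula and the relation between the Weil--Petersson and $L^2$ pairings, which you do not carry out. Second, the entrywise bounds $(\tilde P^{-1})_{jm}=O(\ell_j^{3/2}\ell_m^{3/2})$ are only sketched: the Neumann series handles the thin block, but the uniform invertibility of $\tilde P_{BB}$ by compactness is merely asserted (it requires the convergence of the $\Om^j$, $j\in B$, to linearly independent elements on the Deligne--Mumford limit, i.e.\ essentially Lemmas~\ref{lemma:W}--\ref{lemma:princ-part} plus the isomorphism \eqref{def:iso} on the limit surface), and the Schur-complement step preserving the sharp $\ell_j^{3/2}\ell_m^{3/2}$ rate for mixed indices is not verified. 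All of this is fixable, but the paper's explicit computation of $\langle\Upsilon,\Psi^j\rangle$ sidesteps the entire matrix inversion.
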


\section{Proofs of the results}
In this section  we prove our main results: In Section \ref{subsec:holo_main_1} we prove the properties of the dual basis $\{\Th^j\}$  of $\{\partial \ell_j\}$ stated in Proposition~\ref{prop:RT-new}.
Section~\ref{subsec:holo_twist} is then concerned with the analysis of 
the elements $\Psi^j$ which generate Dehn-twists and hence the proof of Theorem \ref{thm:Dehn-twists-main}, while the properties of the elements $\La^j$ which induce a change of only the length coordinates are analysed in Section~\ref{subsec:holo_la}, where we prove Theorem \ref{thm:La}. Before that we recall well-known properties and results of holomorphic quadratic differentials that are used throughout the proofs of our main results.

\subsection{Preliminaries: Properties of holomorphic quadratic differentials}\label{subsec:holo_main_1} $ $

Before we begin with the proofs of our main results, we recall some standard properties of holomorphic quadratic differentials as well as results on $\Hol(M,g)$ from the joint works \cite{RT2,RT-neg} and \cite{RTZ} of Topping (respectively Topping, Zhu)  
and the second author that we will use later on. 
We note that alternatively we could also use other bases of $\Hol(M,g)$, such as the gradient basis of the length coordinates considered by Wolpert in \cite{WII,Wolpert08}, as basis of our work.

We recall that a quadratic differential is a complex tensor $\Psi$ which is given in local isothermal coordinates $(x,y)$
as $\Psi=\psi \cdot dz^2$, $z=x+\i y$. Here $\psi$ is a complex function which for elements of $\Hol(M,g)$ is furthermore asked to be holomorphic. 
 Using the normalisation that $\abs{dz^2}_{g}=2\rho^{-2}$ for $g=\rho^2(dx^2+dy^2)$ we may write 
the  (hermitian) $L^2$-inner product on the space of quadratic differentials locally as  
\beq 
\label{eq:inner-prod-hallo} 
\langle\Psi,\Phi\rangle_{L^2}= \int \psi\cdot \bar \phi \abs{dz^2}_g^2dv_g= 4\int \psi\cdot \bar\phi \rho^{-2}dxdy.\eeq
In particular
\beq 
\label{eq:Re_inner_prod} 
\langle \Rea(\Psi), \Rea(\Phi)\rangle_{L^2(M,g)} =\thalf \Rea\langle \Psi, \Phi\rangle_{L^2(M,g)}, \eeq
where here and in the following we use the standard abuse of notation that all $L^2$ inner-products, be it of quadratic differentials as in \eqref{eq:inner-prod-hallo} or of real $(0,2)$ tensors  as in \eqref{eq:Re_inner_prod}, are denoted by the same notation $\langle\cdot, \cdot\rangle_{L^2}$.

We also recall that this relation implies that the projection $P_g^H$ from the space of symmetric real $(0,2)$-tensors
onto $H(g)=\Rea(\Hol(M,g))$ and the projection $P^\Hol_g$ from the space of $L^2$-quadratic differentials onto $\Hol(M,g)$ are related by 
$$P_g^H(\Rea(\Phi))= \Rea(P_g^\Hol(\Phi)).$$
We furthermore recall from  \cite[Proposition 4.10]{RT-neg}
that for any quadratic differential $\Upsilon$
\beq 
\label{est:L1-proj}
\norm{P^\Hol_g(\Upsilon)}_{L^1(M,g)}\leq C \norm{\Upsilon}_{L^1(M,g)}
\eeq
for a constant $C$ that depends only on the genus.

Let now $\Col(\si)$ be a collar around a simple closed geodesic $\si$ in $(M,g)$ described by the Collar lemma~\ref{lemma:collar} of Keen-Randol that we recall in the appendix. We we will often use that on $\Col(\si)$ we
may represent any $\Upsilon\in \Hol(M,g)$ by 
its Fourier series in collar coordinates $(s,\th)$ 
\beq\label{eq:Laurent}
\Upsilon= \sum_{n=-\infty}^\infty b_n(\Upsilon) e^{n(s+\i\theta)} dz^2,\qquad\quad b_n(\Upsilon)=b_n(\Upsilon, \Col(\si)) \in \C, \quad z=s+ \i \theta \eeq
and that on $\Col(\si)$ we may split $\Upsilon$ orthogonally into its principal part $b_0(\Upsilon)dz^2$ and its collar decay part $\Upsilon-b_0(\Upsilon)dz^2$. Hence, for any $\Upsilon, \Psi\in \Hol(M,g)$ 
\beqa \label{eq:orth-Four1}
\langle \Upsilon, \Psi\rangle_{L^2(\Col(\si))} 
&=b_0(\Upsilon)\cdot \ov{b_0(\Psi)} \norm{dz^2}_{L^2(\Col(\si))}^2 +\langle \Upsilon, \Psi-b_0(\Psi)dz^2\rangle_{L^2(\Col(\si))},
\eeqa
where here and in the following we sometimes abbreviate 
$b_0(\Psi)=b_0(\Psi,\Col(\si))$ respectively $b_0^i(\Upsilon)=b_0(\Upsilon,\Col(\si^i))$ if it is clear from the context that we work on a fixed collar respectively on collars around a fixed collection $\{\si^i\}$ of simple closed geodesics. We will also use the convention that norms over $\Col(\si)$ are always computed with respect to the hyperbolic metric $g=\rho^2(ds^2+d\th^2)$.

We recall that for every $\de>0$ we may bound an arbitrary
 element $\Upsilon\in \Hol(M,g)$ by 
\beq 
\label{est:Linfty-by-L1} 
\norm{\Upsilon}_{L^\infty(\de\thick(M,g))}\leq C_\de \norm{\Upsilon}_{L^2(M,g)},
\eeq
where $C_\de$ depends on $\de$ and the genus. Indeed, 
\cite[Lemma~2.6]{RT-horizontal} ensures that \eqref{est:Linfty-by-L1} holds true for $C_\de=C\de^{-1/2}$, $C$ depending only on the genus, and indeed also with the $L^\infty$-norm on the left hand side replaced by the $C^k$-norm (then with $C$ depending additionally on $k$).

We also recall that the collar regions around disjoint geodesics are disjoint,  that 
the $\arsinh(1)$ thin part of a hyperbolic surface is always contained in the union of the collars around the simple closed geodesics of length less than $2\arsinh(1)$, that such geodesics are always disjoint and that their number is no more than $3(\gamma-1)$. 

If $\{\si^1, \ldots, \si^k\}$ is the set of all simple closed geodesics of $(M,g)$ of length no more than some constant 
$2\eta<2\arsinh(1)$
we hence have that, as observed in \cite[Lemma~2.4]{RTZ},
\beq 
\label{est:W2}
\norm{w}_{L^\infty(M,g)}\leq C_\eta  \norm{w}_{L^1(M,g)}
\eeq
for all elements 
$w\in W_\eta\define\{\Upsilon\in \Hol(M,g): b_0(\Upsilon, \Col(\si^j))=0,\ 1\leq j\leq k\}.$ Here and in the following all constants are allowed to depend on the genus in addition to the indicated dependences unless explicitly said otherwise.

We also recall the well-known fact that along a curve $(g(t))_t$ of hyperbolic metrics with $g(0)=g$ and $\pt g(0)=\Rea \Upsilon$ for $\Upsilon \in\Hol(M,g)$ the evolution of the length $\ell(t)$ of the simple closed geodesic $\si_t\subset (M,g(t))$ homotopic to $\si_0$ is given by 
\beq 
\label{eq:length-evol}
\tfrac{d}{dt}\, \ell=-\tfrac{2\pi^2}{\ell} \Rea(b_0(\Upsilon, \Col(\si_0))) \qquad \text{ at } t=0,
\eeq
see e.g. \cite[Remark 4.12]{RT-neg} or \cite{Wolpert82}.
So, as observed in \cite[Remark 4.1]{RT-neg}, if we select any  $k$ disjoint simple closed geodesics $\si^j$ in $(M,g)$ 
we have
\beq 
\label{eq:ker-dell}
\ker(\partial \ell_1,\ldots,\partial \ell_k)=\{\Upsilon\in \Hol(M,g): b_0^j(\Upsilon)=b_0(\Upsilon, \Col(\si^j))=0 \text{ for } j=1,\ldots,k\},\eeq
where $\partial \ell_j$ is defined as in \eqref{def:dell} and thus given by
\beq
\label{eq:dell-c}
\partial \ell_j(\Upsilon)=\tfrac12 (-\tfrac{2\pi^2}{\ell_j} \Rea(b_0^j(\Upsilon))+\i \tfrac{2\pi^2}{\ell_j}\Rea(b_0^j(\i \Upsilon))) =-\tfrac{\pi^2}{\ell_j}b_0^j(\Upsilon).
\eeq
In particular for $\{\si^1, \ldots, \si^k\}$  chosen as above as the set of geodesics of length $\leq 2\eta$ we have that $W_\eta=\ker(\partial \ell_1,\ldots,\partial \ell_k)$ and recall that as a consequence of \cite[Theorem~3.7]{Wolpert82}, 
$$\text{codim}(W_\eta)=\text{codim}(\ker(\partial \ell_1,\ldots,\partial \ell_k))=k,$$
see also \cite{RTZ} for an alternative proof in case that $\eta$ is sufficiently small.
 
The fine properties of the elements of  $W_\eta^{\perp}$, $\eta$ small, were analysed in \cite{RT-neg} and we shall use in particular the following version of \cite[Lemma~4.5]{RT-neg}, compare \cite[Lemma~3.12]{Wolpert08} for a closely related result on the corresponding gradient basis.

\begin{lemma} \label{lemma:RT-neg} [Contents of \cite[Lemma~4.5]{RT-neg}]
For any genus $\gamma \geq 2$ there exists a number 
 $\eta_1 \in (0, \arsinh (1))$ so that for every $\bar \eta\in (0,\eta_1]$ the following holds true for a constant $C$ that depends only on $\bar \eta$ and the genus: 
\newline
Let $(M,g)$ be a closed oriented hyperbolic surface of genus $\gamma$ and let $\{ \si^1, \ldots, \si^k\}$ be the set of all simple closed geodesics in $(M,g)$ of length no more than $2 \bar \eta$. Define
\[ W=W_{\bar \eta}\define \{ \Upsilon\in \Hol(M,g)\ |\ \partial\ell_j(\Upsilon)=0, \quad j=1,\ldots, k\},\]
$\partial \ell_j$ the differentials of the length coordinates associated to $\si^j$, compare \eqref{def:dell}.
\newline
Then there exists a (unique) basis $\tilde{\Om}^1,\ldots, \tilde{\Om}^k$ of $W^\perp$, normalised by $\Vert \tilde{\Om}^j\Vert_{L^2(M,g)}=1$, so that 
\[ b_0(\tilde{\Om}^j, \Col(\si^i))=0\quad \text{for}\ i\neq j \text{ while }  b_0(\tilde{\Om}^j, \Col(\si^j))\in \R^+\]
and each $\tilde{\Om}^j$ is concentrated essentially only on the corresponding collar in the sense that
\begin{align}\label{est:RT-neg1}
  \Vert \tilde{\Om}^j\Vert_{L^\infty(M\setminus \Col(\si^j), g)} +
\Vert \tilde{\Om}^j - b_0(\tilde{\Om}^j, \Col(\si^j)) dz^2\Vert_{L^\infty(\Col(\si^j), g)}\leq C \ell_j^{3/2}
\end{align}
and
\begin{align}\label{est:RT-neg3}
 1-C\ell_j^3\leq b_0(\tilde{\Om}^j, \Col(\si^j)) \Vert dz^2\Vert_{L^2(\Col(\si^j),g)} \leq 1.
\end{align}
Furthermore, the $\tilde{\Om}^j$'s are nearly orthogonal in the sense that for $i\neq j$
\begin{align}\label{est:RT-neg4}
 |\langle \tilde{\Om}^i, \tilde{\Om}^j\rangle_{L^2(M,g)}|\leq C\ell_i^{3/2} \ell_j^{3/2}
\end{align}
and satisfy 
\beq \label{est:RT-neg-Linfty}
\norm{\tilde\Om^j}_{L^\infty(M,g)}\leq C\ell_j^{-1/2} \text{ with }
\norm{\tilde\Om^j}_{L^\infty(\de\thick(M,g))}\leq C_\de \ell_j^{3/2}
\eeq
for any $\de>0$, where $C_\de$ depends on $\de$, $\bar\eta$ and the genus.
\end{lemma}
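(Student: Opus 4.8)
\textbf{Plan for the proof of Lemma \ref{lemma:RT-neg}.}
Since this is quoted as ``Contents of \cite[Lemma~4.5]{RT-neg}'', the cleanest route is to cite the original proof and only indicate how the statement is assembled; but let me sketch how I would actually prove it from scratch. Fix the genus $\gamma$ and let $\eta_1$ be the constant whose existence is needed; throughout we work with $\bar\eta\in(0,\eta_1]$, with $\{\si^1,\ldots,\si^k\}$ the collection of all simple closed geodesics of length $\leq 2\bar\eta$ in $(M,g)$, which is automatically disjoint and of cardinality $\leq 3(\gamma-1)$, and with $W=W_{\bar\eta}=\ker(\partial\ell_1,\ldots,\partial\ell_k)$, so that $\dim W^\perp=k$ by \eqref{eq:ker-dell} and the codimension statement recalled after it. The first step is to build, for each $j$, an explicit approximate element: one takes the ``model'' holomorphic quadratic differential $dz^2$ on the collar $\Col(\si^j)$, cuts it off smoothly away from that collar, and projects the result into $\Hol(M,g)$ using $P_g^\Hol$. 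Call the outcome $\Xi^j$. Using \eqref{est:L1-proj} together with the pointwise/size estimates on $dz^2$ on a collar (the bounds \eqref{est:sizes_on_collars}, \eqref{est:dz-lower} referred to in the excerpt, i.e.\ $\norm{dz^2}_{L^2(\Col(\si^j))}\sim \ell_j^{-1/2}$ up to constants and exponential smallness of $dz^2$ near the collar boundary) and the fact that $P_g^\Hol$ is the identity on $\Hol$ and kills the ``error'' only up to an $L^1$-controlled term, one shows $\Xi^j$ is within $O(\ell_j^{3/2})\cdot\norm{dz^2}_{L^2}$ of $dz^2$ in the relevant norms; here \eqref{est:W2} (the $L^\infty\leq C_\eta L^1$ bound on $W_\eta$) is the key tool that upgrades the $L^1$ control of the correction term into the $L^\infty$ estimates \eqref{est:RT-neg1}, after observing that the correction lies in (a space close to) $W$.

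The second step is to pass from the approximate family $\{\Xi^j\}$ to the genuine basis $\{\tilde\Om^j\}$ with the exact normalisations $b_0(\tilde\Om^j,\Col(\si^i))=0$ for $i\neq j$, $b_0(\tilde\Om^j,\Col(\si^j))\in\R^+$, and $\norm{\tilde\Om^j}_{L^2}=1$. Since $\Upsilon\mapsto(b_0(\Upsilon,\Col(\si^1)),\ldots,b_0(\Upsilon,\Col(\si^k)))$ is, by \eqref{eq:dell-c}, a linear isomorphism from $W^\perp$ to $\C^k$, there is a unique element of $W^\perp$ with any prescribed tuple of principal coefficients; one takes the one whose principal-coefficient tuple is $(0,\ldots,\rho_j,\ldots,0)$ with $\rho_j>0$ chosen to make the $L^2$-norm one. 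The content is quantitative: one must show this exact element differs from (a normalised multiple of) $\Xi^j$ by $O(\ell_j^{3/2})$ in $L^\infty$. This is where one inverts a near-diagonal $k\times k$ matrix — the matrix of principal coefficients $b_0(\Xi^j,\Col(\si^i))$, which by Step 1 is $\rho_j\,\delta^{ij}$ up to entries of size $O(\ell_i^{?}\ell_j^{?})$ — so that its inverse is again near-diagonal with well-controlled off-diagonal entries; feeding that back through \eqref{est:W2} gives \eqref{est:RT-neg1} and, reading off the principal coefficient, the two-sided bound \eqref{est:RT-neg3} (the upper bound $b_0\norm{dz^2}_{L^2}\leq 1$ being essentially Bessel's inequality once $\tilde\Om^j$ is decomposed orthogonally on $\Col(\si^j)$ as in \eqref{eq:orth-Four1}, and the lower bound $1-C\ell_j^3$ coming from estimating the collar-decay part plus the contribution away from $\Col(\si^j)$, both of which are $O(\ell_j^{3/2})$ by \eqref{est:RT-neg1}, hence contribute $O(\ell_j^3)$ to the squared norm).

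For the near-orthogonality \eqref{est:RT-neg4}, with $i\neq j$ one splits $M=\Col(\si^i)\cup\Col(\si^j)\cup(\text{rest})$ (collars are disjoint) and uses \eqref{eq:orth-Four1} on each collar together with \eqref{est:RT-neg1}: on $\Col(\si^i)$, $\tilde\Om^j$ has vanishing principal part so only the decay part of $\tilde\Om^j$ (size $O(\ell_j^{3/2})$) pairs with $\tilde\Om^i$ (size $O(\ell_i^{-1/2})$ but in $L^2(\Col(\si^i))$ of order $1$), and symmetrically on $\Col(\si^j)$; on the complement both are $O(\ell_i^{3/2})$ and $O(\ell_j^{3/2})$ respectively, with bounded area. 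Assembling the three contributions gives $|\langle\tilde\Om^i,\tilde\Om^j\rangle|\leq C\ell_i^{3/2}\ell_j^{3/2}$. Finally \eqref{est:RT-neg-Linfty} follows from \eqref{est:RT-neg1}, \eqref{est:RT-neg3} and the model bounds $\norm{dz^2}_{L^\infty(\Col(\si^j))}\leq C\ell_j^{-2}$ and $\norm{dz^2}_{L^2(\Col(\si^j))}\geq c\ell_j^{-1/2}$, which give $\norm{b_0(\tilde\Om^j,\Col(\si^j))dz^2}_{L^\infty}\leq C\ell_j^{-1/2}$ globally and, on $\de\thick(M,g)$ (where $dz^2$ is exponentially small, of size $\leq C_\de\ell_j^{?}$, certainly $\leq C_\de\ell_j^{3/2}$), the claimed $O(\ell_j^{3/2})$ bound.

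\textbf{Main obstacle.} The genuinely delicate point is the quantitative inversion in Step 2: one needs that for $\bar\eta$ small the matrix $\big(b_0(\Xi^i,\Col(\si^j))\big)_{i,j}$ is invertible with an inverse whose size is controlled in a way compatible with the target error rates, uniformly over all hyperbolic surfaces of the given genus and all admissible $\bar\eta$. This requires the off-diagonal entries of that matrix to be not merely small but small \emph{relative to} the diagonal entries $\rho_j\sim\ell_j^{1/2}$-ish in the right weighted sense, which in turn is exactly the kind of ``separation of scales across different collars'' estimate that \eqref{est:W2} and the disjointness/number bounds on short geodesics are designed to provide; making the bookkeeping of powers of $\ell_i,\ell_j$ come out to the sharp $3/2$ exponents is where all the care goes. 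Everything else is a routine, if lengthy, combination of the collar model estimates with \eqref{est:L1-proj}, \eqref{est:W2}, and \eqref{est:Linfty-by-L1}.
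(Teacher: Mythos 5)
The paper itself offers no proof of this lemma: it is imported verbatim from \cite[Lemma~4.5]{RT-neg}, so your first instinct --- that the correct ``proof'' here is the citation --- is exactly what the paper does, and on that level your proposal matches the paper. Your additional from-scratch sketch also follows what is essentially the strategy of \cite{RT-neg} (cut off $dz^2$ on the collar, project with $P_g^\Hol$, control the correction via \eqref{est:L1-proj} and \eqref{est:W2}, then invert a near-diagonal matrix of principal coefficients), so the architecture is sound.

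Two pieces of your bookkeeping are off, however, and one of them matters for the claimed exponents. First, \eqref{est:sizes_on_collars} gives $\norm{dz^2}_{L^2(\Col(\si^j))}\sim \ell_j^{-3/2}$, not $\ell_j^{-1/2}$, and correspondingly the normalising principal coefficient satisfies $b_0(\tilde\Om^j,\Col(\si^j))\sim \ell_j^{3/2}$ (compare \eqref{est:princ-parts-Om-Th}), not ``$\ell_j^{1/2}$-ish''; with the exponents as you state them, the computation $\norm{b_0\, dz^2}_{L^\infty}\leq \norm{dz^2}_{L^\infty}/\norm{dz^2}_{L^2}$ would yield $C\ell_j^{-3/2}$ rather than the asserted global bound $C\ell_j^{-1/2}$ in \eqref{est:RT-neg-Linfty}, so the sharp $L^2$ lower bound $\norm{dz^2}_{L^2(\Col(\si^j))}\geq c\,\ell_j^{-3/2}$ is genuinely needed. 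Second, your derivation of the near-orthogonality \eqref{est:RT-neg4} extracts only one factor of smallness from each collar and therefore only gives $O(\ell_i^{3/2})+O(\ell_j^{3/2})$, which is weaker than the product bound. The correct pairing on $\Col(\si^i)$ is: since $b_0(\tilde\Om^j,\Col(\si^i))=0$, use \eqref{eq:orth-Four1} to replace $\tilde\Om^i$ by its collar-decay part $\tilde\Om^i-b_0(\tilde\Om^i)dz^2$, of size $O(\ell_i^{3/2})$ in $L^\infty(\Col(\si^i))$ by \eqref{est:RT-neg1}, and pair it against $\tilde\Om^j$ in $L^1(\Col(\si^i))$, which is itself $O(\ell_j^{3/2})$ because $\Col(\si^i)\subset M\setminus\Col(\si^j)$ and the area is bounded; only then does $\ell_i^{3/2}\ell_j^{3/2}$ appear. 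This is exactly how the paper argues for the analogous estimate on the $\Om^j$ in the proof of Proposition \ref{prop:RT-new}.
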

We can view the $\tilde \Om^j$ as renormalisations  
$\tilde \Om^j=
-\frac{\tilde \Th^j}{\norm{\tilde \Th^j}_{L^2(M,g)}}$
of the dual basis $\{\tilde \Th^j\}$ of $W^\perp $ 
to $\{\partial \ell_j\}$, i.e. of the elements 
\beq\label{def:tilde-th}
\tilde \Th^j\in W^\perp \text{ for which } 
\de_{i}^j =  \partial \ell_i (\tilde \Th^j)=
-\tfrac{\pi^2}{\ell_j} b_0(\tilde\Th^j, \Col(\si^i)).\eeq

\begin{rmk}\label{rmk:half}
After possibly reducing $\eta_1=\eta_1(\gamma)$, we obtain that in the setting of 
 Lemma~\ref{lemma:RT-neg} 
\beq 
\label{est:b0-lower-half}
b_0(\tilde{\Om}^j, \Col(\si^j)) \Vert dz^2\Vert_{L^2(\Col(\si^j),g)}\geq \tfrac12 \text{ for every } j=1,\ldots, k\eeq 
and in the following we shall always use Lemma~\ref{lemma:RT-neg} for 
$\eta_1=\eta_1(\gamma)$ chosen in this way.
\end{rmk}

\begin{proof}[Proof of Remark \ref{rmk:half}]
Let $\eta_0=\eta_0(\gamma)$ be a number for which Lemma~\ref{lemma:RT-neg} holds true.
Given a hyperbolic surface $(M,g)$ we let $\{\si^1,\ldots,\si^k\}$ be the set of geodesics of length no more than $2\eta_0$, without loss of generality assumed to be ordered by increasing length, and denote by $\bar \Om^1,\ldots,\bar\Om^k$ the basis of $W_{\eta_0}^\perp$ obtained in that lemma. Let now $\bar\eta\in(0,\eta_1]$ for a number $\eta_1\leq \eta_0$ that is to be determined and let $k_1\leq k$ be so that the set of geodesics of length $2\bar \eta$ or less is $\{\si^1,\ldots, \si^{k_1}\}$ and let $\tilde \Om^1,\ldots, \tilde \Om^{k_1}$ be the basis of $W_{\bar \eta}$ from Lemma~\ref{lemma:RT-neg}. 
We note that while the $\tilde \Om^j $ satisfy all of the estimates stated in Lemma~\ref{lemma:RT-neg} the constants $C$ in these estimates depend on $\bar \eta$ so that we cannot directly conclude that \eqref{est:RT-neg3} implies \eqref{est:b0-lower-half} for sufficiently small $\eta_1$.
Instead we apply \eqref{est:RT-neg3} to the corresponding elements $\bar\Om^1,\ldots,\bar\Om^{k_1}$ of the basis of $W_{\eta_0}$, as this allows us to conclude that 
\beq 
\label{snow1}
b_0^j(\bar\Om^j)\norm{dz^2}_{L^2(\Col(\si^j))}\geq 1-C_{\eta_0}\cdot \eta_1^{3/2}\geq \tfrac{1}{2}\eeq
provided $\eta_1=\eta_1(\gamma)$ is chosen sufficiently small (as $\eta_0$, and hence $C_{\eta_0}$ is fixed).
The elements $\tilde \Om^j$ of $W_{\bar \eta}^{\perp}$ are now obtained from $\bar{\Om}^j$ as 
\beqs
\tilde \Om^j=\frac{\bar \Om^j-P_g^{W_{\bar\eta}}(\bar \Om^j)}{\norm{\bar \Om^j-P_g^{W_{\bar\eta}}(\bar \Om^j)}_{L^2(M,g)}}.\eeqs
As $b_0^j(P_g^{W_{\eta}}(\bar \Om^j))=0$, while of course $\norm{\bar \Om^j-P_g^{W_{\bar\eta}}(\bar \Om^j)}_{L^2(M,g)}\leq \norm{\bar \Om^j}_{L^2(M,g)}=1$ we hence have that $b_0^j(\tilde\Om^j)\geq b_0^j(\bar\Om^j)$ and the claim \eqref{est:b0-lower-half} follows from \eqref{snow1}.
\end{proof}

Combining this remark with \eqref{est:dz-L2-upper} and the explicit formula \eqref{def:tilde-th} for the principal part of $\tilde \Th^j$ on $\Col(\si^j)$, this remark implies in particular that 
\beqs 
\label{est:L2-tilde-theta-2}
 \Vert\tilde \Th^j\Vert_{L^2(M,g)}\leq C\ell_j\norm{dz^2}_{L^2(\Col(\si^j))}\leq C\ell_j^{-1/2}.
\eeqs
Using additionally that the principal and collar decay parts are $L^2$-orthogonal, compare \eqref{eq:orth-Four1}, we can obtain a far more refined bound on  $\norm{\tilde \Th^j}_{L^2}^2$ as a direct consequence of the above 
result from \cite{RT-neg}, while the expression for $\norm{dz^2}_{L^1}$ from \eqref{est:sizes_on_collars} furthermore gives a bound on the $L^1$-norm of $\tilde{\Th}^j$. To be more precise, we have

\begin{cor}\label{cor:RT-neg}
In the setting of Lemma~\ref{lemma:RT-neg} 
the elements $\tilde\Th^j$ characterised by \eqref{def:tilde-th} satisfy  
 \beqs 
 \label{est:L2-tilde-theta}
\big| \Vert\tilde \Th^j\Vert_{L^2(M,g)} -\tfrac{\ell_j}{\pi^2} \Vert dz^2\Vert_{L^2(\Col(\si^j),g)}\big|\leq C\ell_j^{5/2}
\text{ and }
\big|\norm{\tilde \Th^j}_{L^1(M,g)}-8\pi\big|\leq C\ell_j.
\eeqs
\end{cor}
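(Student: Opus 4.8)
The plan is to deduce both bounds from the identity $\tilde\Th^j=-\norm{\tilde\Th^j}_{L^2(M,g)}\,\tilde\Om^j$, the explicit values $b_0(\tilde\Th^j,\Col(\si^j))=-\ell_j/\pi^2$ and $b_0(\tilde\Th^j,\Col(\si^i))=0$ for $i\neq j$ coming from \eqref{def:tilde-th}, the sharp $L^\infty$ concentration estimates \eqref{est:RT-neg1} of Lemma~\ref{lemma:RT-neg}, the bound $\norm{\tilde\Th^j}_{L^2(M,g)}\leq C\ell_j^{-1/2}$ established above, and the collar formulas for $\norm{dz^2}_{L^1(\Col(\si))}$ and $\norm{dz^2}_{L^2(\Col(\si))}$ from \eqref{est:sizes_on_collars} and \eqref{est:dz-lower}. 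Throughout we use that $\Area(M,g)=4\pi(\gamma-1)$ and that the area of every collar $\Col(\si)$ is bounded by a universal constant, so that passing from an $L^\infty$-bound to an $L^1$- or $L^2$-bound over $\Col(\si^j)$ or $M\setminus\Col(\si^j)$ never costs a power of $\ell_j$.

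For the $L^2$-estimate, I would first split $\norm{\tilde\Th^j}_{L^2(M,g)}^2$ over $\Col(\si^j)$ and its complement $M\setminus\Col(\si^j)$ and, on $\Col(\si^j)$, split off the principal part using the orthogonality \eqref{eq:orth-Four1}, to obtain
\[
\norm{\tilde\Th^j}_{L^2(M,g)}^2=A_j^2+E_j,\qquad A_j\define\tfrac{\ell_j}{\pi^2}\norm{dz^2}_{L^2(\Col(\si^j),g)},
\]
with $E_j\define\norm{\tilde\Th^j-b_0(\tilde\Th^j,\Col(\si^j))dz^2}_{L^2(\Col(\si^j))}^2+\norm{\tilde\Th^j}_{L^2(M\setminus\Col(\si^j))}^2\geq0$. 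Rewriting the two summands of $E_j$ through $\tilde\Om^j$ and estimating their $L^2$-norms by $L^\infty$-norms times the (universally bounded) areas, \eqref{est:RT-neg1} together with $\norm{\tilde\Th^j}_{L^2(M,g)}\leq C\ell_j^{-1/2}$ gives $E_j\leq C\ell_j^2$. Since \eqref{est:dz-lower} yields $A_j\geq c\ell_j^{-1/2}$ and since $E_j\geq0$ forces $\norm{\tilde\Th^j}_{L^2(M,g)}\geq A_j$, the elementary identity $\norm{\tilde\Th^j}_{L^2(M,g)}-A_j=E_j\big(\norm{\tilde\Th^j}_{L^2(M,g)}+A_j\big)^{-1}$ gives $\big|\norm{\tilde\Th^j}_{L^2(M,g)}-A_j\big|\leq C\ell_j^2\cdot\big(c\ell_j^{-1/2}\big)^{-1}\leq C\ell_j^{5/2}$, which is the first claim.

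For the $L^1$-estimate, the triangle inequality gives
\[
\Big|\norm{\tilde\Th^j}_{L^1(M,g)}-\tfrac{\ell_j}{\pi^2}\norm{dz^2}_{L^1(\Col(\si^j),g)}\Big|\leq\norm{\tilde\Th^j-b_0(\tilde\Th^j,\Col(\si^j))dz^2}_{L^1(\Col(\si^j))}+\norm{\tilde\Th^j}_{L^1(M\setminus\Col(\si^j))},
\]
and bounding each term by the corresponding $L^\infty$-norm times the (bounded) area, rescaling to $\tilde\Om^j$ and invoking \eqref{est:RT-neg1} and $\norm{\tilde\Th^j}_{L^2(M,g)}\leq C\ell_j^{-1/2}$ shows that both terms are $\leq C\ell_j$. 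Since \eqref{est:sizes_on_collars} gives $\tfrac{\ell_j}{\pi^2}\norm{dz^2}_{L^1(\Col(\si^j),g)}=8\pi+O(\ell_j)$, combining the two proves the second claim.

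I do not anticipate a serious obstacle: the argument is essentially a bookkeeping exercise combining Lemma~\ref{lemma:RT-neg}, the explicit principal part of $\tilde\Th^j$ and the collar geometry. The two points that require care are (i) keeping the error $E_j$ manifestly non-negative, so that the square-root step produces the sharp rate $\ell_j^{5/2}$ rather than only $\ell_j^{3/2}$ — this is where the lower bound $A_j\geq c\ell_j^{-1/2}$ from \eqref{est:dz-lower} enters — and (ii) checking that the area of each collar, and of $M$, is bounded independently of the lengths of the short geodesics, so that the $L^\infty$-to-$L^1$ and $L^\infty$-to-$L^2$ passages introduce no extra powers of $\ell_j$.
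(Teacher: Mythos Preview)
Your proposal is correct and follows exactly the approach the paper indicates just before the corollary: split off the principal part on $\Col(\si^j)$ using the orthogonality \eqref{eq:orth-Four1}, bound the remainder via \eqref{est:RT-neg1} together with $\norm{\tilde\Th^j}_{L^2}\leq C\ell_j^{-1/2}$, and use the explicit collar formulas from \eqref{est:sizes_on_collars}. One small correction: the lower bound $A_j\geq c\,\ell_j^{-1/2}$ you need comes from the asymptotic $\norm{dz^2}_{L^2(\Col(\si^j))}^2=\tfrac{32\pi^5}{\ell_j^3}+O(1)$ in \eqref{est:sizes_on_collars}, not from \eqref{est:dz-lower} (which only gives $\norm{dz^2}_{L^2}\geq c_1$ and hence $A_j\geq c\,\ell_j$).
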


\subsection{Proof of Proposition \ref{prop:RT-new} on the dual basis $\{ \Th^j\}_{j=1}^{3(\gamma-1)}$ to $\partial \ell_j$: }
\label{subsec:holo_om}
$ $

Let $(M,g)$ be a closed oriented surface of genus $\gamma$, let  $\mathcal{E}=\{\si^i\}_{i=1}^{3(\gamma-1)}$ be a decomposing collection of simple closed geodesics in a hyperbolic surface $(M,g)$, i.e. a collection of disjoint geodesics which decomposes $(M,g)$ into pairs of pants. Let $\{\Th^j\}_{j=1}^{3(\gamma-1)}$ be the basis of $\Hol(M,g)$ which is dual to 
 $\{\partial \ell_j\}_{j=1}^{3(\gamma-1)}$ of $\Hol(M,g)$.  In this section we want to derive the estimates on 
 $\Th^j$ and the corresponding renormalised elements 
$\Om^j$ stated in Proposition \ref{prop:RT-new}. The key step in this proof is to show the following Lemma~\ref{lemma:lower-est-princ-part} which allows us to bound the principal parts of $\Om^j$ on the corresponding collar. In the last part of the present section we will then combine this lemma with the results from \cite{RT-neg} that we recalled in Section~\ref{subsec:holo_main_1} to give the proof of  Proposition~\ref{prop:RT-new}. 

\begin{lemma}\label{lemma:lower-est-princ-part}
 Let $(M,g)$ be a closed oriented hyperbolic surface of genus $\gamma$ and let $\mathcal{E}=\{\si^1, \ldots, \si^{3(\gamma-1)}\}$ be a decomposing collection of disjoint  simple closed geodesics and let  $\eta\in (0,\arsinh(1))$ and  $\bar L<\infty$ be so that \eqref{ass:eta} and \eqref{ass:upperbound} are satisfied. Then for any number $\ell_0>0$ there exists a constant $\eps_0>0$ depending on $\ell_0$, $\eta$, $\bar L$ and the genus $\gamma$ such that
 the elements $\Om^j$ characterised by \eqref{def:Om-by-Th} satisfy
 \[ b_0 (\Omega^j, \Col(\si^j)) \geq \eps_0 \text{ for every } j \text{ for which } L_g(\si^j)\geq \ell_0.
 \]
\end{lemma}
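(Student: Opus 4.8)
The plan is to argue by contradiction using compactness. Suppose the statement fails for some fixed $\ell_0>0$; then there is a sequence of hyperbolic surfaces $(M,g_m)$ with decomposing collections $\mathcal{E}_m=\{\si^1_m,\dots,\si^{3(\gamma-1)}_m\}$ satisfying \eqref{ass:eta} and \eqref{ass:upperbound} with the same $\eta,\bar L$, and indices $j_m$ with $L_{g_m}(\si^{j_m}_m)\geq \ell_0$, yet $b_0(\Om^{j_m}_m,\Col(\si^{j_m}_m))\to 0$. After relabelling we may assume $j_m=j$ is fixed, and after passing to a subsequence the lengths $\ell^i_m=L_{g_m}(\si^i_m)$ converge in $[0,\bar L]$; the geodesics in $\mathcal{E}_m$ whose length stays bounded below converge (in the pointed/Cheeger--Gromov sense, after suitable marking) to a noded hyperbolic surface $(M_\infty,g_\infty)$ obtained by pinching exactly those $\si^i_m$ whose length tends to $0$. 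Because $L_{g_m}(\si^j_m)\geq\ell_0$, the geodesic $\si^j_m$ persists in the limit as a genuine (non-pinched) simple closed geodesic $\si^j_\infty$ with a collar $\Col(\si^j_\infty)$ of definite size.

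Next I would pass the elements $\Om^j_m$ to the limit. The key structural facts are: the $\Om^j_m$ are holomorphic quadratic differentials, unit-normalised in $L^2$, and dual to $\partial\ell_j$ in the sense $\partial\ell_j(\Th^j_m)=\de^j_j$, i.e. by \eqref{eq:dell-c} their principal part on $\Col(\si^j_m)$ is $b_0(\Th^j_m,\Col(\si^j_m))=-\ell^j_m\norm{\Th^j_m}_{L^2}/\pi^2$ up to the renormalisation $\Om^j=-\Th^j/\norm{\Th^j}_{L^2}$, and they have vanishing principal part on every other short collar $\Col(\si^i_m)$ with $\ell^i_m$ small enough — this last point follows because $\{\partial\ell_i\}$ over short geodesics span the annihilator of $W_{\bar\eta}$ and the relevant duality forces the other principal parts to be controlled (here Lemma~\ref{lemma:RT-neg} and Proposition~\ref{prop:RT-new}'s earlier portions, or rather the machinery they rest on, give the needed bounds). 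Using the $L^\infty$-by-$L^2$ bound \eqref{est:Linfty-by-L1} on the $\de$-thick part together with \eqref{est:W2} on the complement, the $\Om^j_m$ are locally uniformly bounded away from the pinching geodesics, so a diagonal/normal-families argument extracts a subsequential limit $\Om^j_\infty$ which is a nonzero holomorphic quadratic differential on $(M_\infty,g_\infty)$ (nonzero because the $L^2$-mass cannot all escape into the thin parts: on each thin collar $\Upsilon=b_0dz^2+(\text{decay})$, the decay part has uniformly small $L^2$-mass by \eqref{est:RT-neg1}-type control once we know the principal parts on the \emph{other} collars are small, and the principal part on $\Col(\si^j_m)$ is exactly what we are assuming tends to zero, so if $\Om^j_\infty\equiv 0$ then $\norm{\Om^j_m}_{L^2}\to 0$, contradicting the unit normalisation — this requires some care and is part of the main obstacle).

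Now I would derive the contradiction: the hypothesis $b_0(\Om^{j}_m,\Col(\si^j_m))\to 0$ forces $b_0(\Om^j_\infty,\Col(\si^j_\infty))=0$, and by construction $b_0(\Om^j_\infty,\Col(\si^i_\infty))=0$ for all other persisting short collars as well. Equivalently, $\Om^j_\infty$ lies in the space $W$ of Lemma~\ref{lemma:RT-neg} for the limit surface, but it is also a limit of elements dual to $\partial\ell_j$, i.e. of elements of $W_m^\perp$ in the relevant splitting. The point is that $\partial\ell_j$ is, on the limit, a \emph{nonzero} functional on $\Hol(M_\infty,g_\infty)$ (its value on $\Om^j_\infty$ is a nonzero multiple of $b_0(\Om^j_\infty,\Col(\si^j_\infty))$ by the limiting version of \eqref{eq:dell-c} — and here we see the tension), whereas the duality relations force $\Om^j_\infty$ to be simultaneously in the kernel of $\partial\ell_j$ and to represent (after renormalisation) the dual direction to $\partial\ell_j$, which is impossible for a nonzero element. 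More concretely: $\partial\ell_j(\Th^j_m)=1$ gives, after dividing by $\norm{\Th^j_m}_{L^2}$, that $-\partial\ell_j(\Om^j_m)=1/\norm{\Th^j_m}_{L^2}>0$, and by \eqref{eq:dell-c} this equals $\tfrac{\pi^2}{\ell^j_m}b_0(\Om^j_m,\Col(\si^j_m))$; since $\ell^j_m\geq\ell_0$ and $\ell^j_m\leq\bar L$ stay pinched between positive constants, $b_0(\Om^j_m,\Col(\si^j_m))\to 0$ would force $\norm{\Th^j_m}_{L^2}\to\infty$, i.e. $\partial\ell_j(\Om^j_m)\to 0$, so in the limit $\partial\ell_j(\Om^j_\infty)=0$ while $\norm{\Om^j_\infty}_{L^2}=1$ — but then $\Om^j_\infty$ would have to be the zero of the nondegenerate pairing, or at least orthogonal to the one-dimensional span of the dual vector of $\partial\ell_j$, and combined with the vanishing of all the other principal parts this pushes $\Om^j_\infty$ into a subspace that, on the limit surface, is forced to be trivial; that is the contradiction. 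The main obstacle I anticipate is making the degeneration argument fully rigorous: controlling the convergence of the $\Om^j_m$ uniformly across the thin parts, ensuring no $L^2$-mass is lost in the limit, and correctly identifying the limiting duality relations on the noded surface (including that $\partial\ell_j$ survives as a nondegenerate functional precisely because $\si^j$ does not pinch). A cleaner alternative, avoiding noded-surface technology, would be to prove a quantitative version directly: bound $\norm{\Th^j}_{L^2}$ from above by a constant depending on $\ell_0,\eta,\bar L,\gamma$, using \eqref{est:L1-proj}, \eqref{est:W2}, \eqref{est:Linfty-by-L1} and the explicit principal part \eqref{eq:dell-c}, which then gives $b_0(\Om^j,\Col(\si^j))=\ell_j/(\pi^2\norm{\Th^j}_{L^2})\geq\eps_0$ outright; I would in fact attempt this direct route first, falling back on the compactness argument only if the uniform upper bound on $\norm{\Th^j}_{L^2}$ proves elusive.
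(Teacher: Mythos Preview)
Your contradiction-via-degeneration strategy is exactly the paper's approach. Two points where your sketch can be sharpened and which dissolve the obstacles you flag:

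The vanishing of the other principal parts is immediate from the definition of the dual basis, and holds for \emph{all} $i\neq j$, not just the short ones: $\partial\ell_i(\Th^j)=\de^j_i$ together with \eqref{eq:dell-c} gives $b_0(\Om^j,\Col(\si^i))=0$ for every $i\neq j$. No appeal to Lemma~\ref{lemma:RT-neg} or related machinery is needed here. This places $\Om^j_m$ directly in the space $W_m$ of holomorphic quadratic differentials with vanishing principal part on the collapsing collars, and the no-mass-loss convergence is then exactly the content of the result from \cite{RTZ} recalled in the paper as Lemma~\ref{lemma:W} and Corollary~\ref{cor:W}: any unit-norm sequence in $W_m$ subconverges in $C^\infty_{loc}$ to a unit-norm element of $\Hol(\Si,g_\infty)$. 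This is how the paper handles what you identify as the main obstacle.

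For the contradiction itself, your dual/kernel tension argument for the single functional $\partial\ell_j$ is not quite enough. The paper instead uses that the non-collapsing geodesics $\si^{k+1}_m,\ldots,\si^{3(\gamma-1)}_m$ have lengths bounded by $\bar L$, so after a further subsequence their homotopy classes stabilise and the limiting geodesics $\si^{k+1}_\infty,\ldots,\si^{3(\gamma-1)}_\infty$ form a pants decomposition of $(\Si,g_\infty)$. One then needs a separate technical lemma (Lemma~\ref{lemma:princ-part} in the paper, proved via \eqref{eq:length-evol} and $C^1$-convergence of the geodesics) to show $b_0(\Om^j_m,\Col(\si^i_m))\to b_0(\Om_\infty,\Col(\si^i_\infty))$ for each persisting $i$. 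Since all these principal parts vanish along the sequence, $\Om_\infty$ has zero principal part on every collar of a full pants decomposition of the limit, and Wolpert's isomorphism \cite[Theorem~3.7]{Wolpert82} forces $\Om_\infty=0$, contradicting $\norm{\Om_\infty}_{L^2}=1$.
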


We note that the sharp rate for the dependence of $\eps_0$ on $\ell_0$ is $\eps_0=C(\eta, \bar L,\gamma) \ell_0^{3/2}$, but that we shall not need this since the main purpose of the above lemma is to control the principal parts of elements $\Om^j$ corresponding to geodesics $\si^j$ which are not very short.

We note that as $\Om^j$ and $\Th^j$ are related by \eqref{eq:Om-Th} a lower bound on the principal part of $\Om^j$ is equivalent to an upper bound on the $L^2$-norm of the element $\Th^j$ of the dual basis, with the above result implying in particular that 
$$\norm{\Th^j}_{L^2}\leq C(\eta, \bar L, \gamma,\ell_0) \text{ for } j  \text{ so that } L(\si^j)\geq \ell_0.$$

\begin{rmk}\label{rmk:inverse}
If one would instead want to prove Proposition \ref{prop:RT-new} based on Wolpert's estimates on the gradient basis $\text{grad }\ell_j$ from \cite{Wolpert08, Wolpert12}, respectively its complex analogue $\th_{\si^j}$ considered in \cite{WII}, the main step in the proof would be to prove quantitative estimates on the \emph{inverse} of the matrix formed by the inner products $\langle \th_{\si^j},\th_{\si^k}\rangle$ or equivalently on the \emph{inverse} of the matrix $\langle \text{grad }\ell_j, \text{ grad } \ell_k \rangle$. These inner products are well controlled if at least one of the geodesics $\si^j$ and $\si^k$ is short, with the results of \cite{WII, Wolpert08, Wolpert12} yielding that $0<\langle \frac{\text{grad } \ell_j}{\norm{\text{grad } \ell_j}_{L^2}}, \frac{\text{grad } \ell_k}{\norm{\text{grad } \ell_k}_{L^2}}\rangle-\de_{ik}\leq C \ell_j^{3/2}\ell_k^{3/2}$. However this estimate does not yield sufficient information on the inner products of elements corresponding to geodesics whose length is of order $1$ to be able to derive quantitative bounds on the inverse matrix, as it would e.g. allow for two elements $\text{grad }\ell_{j,k} $ corresponding to two different, not too short, geodesics to be close to linear dependent. 
\\
We note that a bound on the inverse of this matrix is equivalent to a bound on the inner products of the elements of the dual bases. Thus also for this alternative route of proof based on results from \cite{Wolpert12} the main step would be to obtain suitable upper bounds on the norms of the elements of the dual spaces (in particular of those that correspond to geodesics that have length of order one) and hence to prove Lemma \ref{lemma:lower-est-princ-part}.

\end{rmk}

The proof of this lemma is based in particular on the following result from \cite{RTZ}: 

\begin{lemma}\label{lemma:W}[Contents of \cite[Lemma~2.4]{RTZ}]
 Let $(M,g_i)$ be a sequence of closed oriented hyperbolic surfaces that degenerate to a punctured (possibly disconnected) hyperbolic surface $(\Si, g_\infty)$ by collapsing $k$ geodesics $\si_i^j\subset (M,g_i)$, $j\in \{1, \ldots, k\}$ as described by the differential geometric version of the Deligne-Mumford compactness theorem, compare e.g. \cite[Proposition A.3]{RT-neg}.
Then
 \begin{align}\label{def:Wi}
  W_i\define \{ \Upsilon\in \Hol(M, g_i)\ |\ b_0(\Upsilon, \Col(\si_i^j))=0, \, j=1,\ldots, k\}
 \end{align}
converge to space $\Hol(\Si, g_\infty)$ of integrable holomorphic quadratic differentials on $(\Si,g_\infty)$
in the following sense:
\newline
There exists a sequence $\{w_i^j\}_{j=1}^{3(\gamma -1)-k}$ of orthonormal bases of $W_i$ and an orthonormal basis $\{w_\infty^j\}_{j=1}^{3(\gamma -1)-k}$ of $\Hol(\Si, g_\infty)$
so that for every $j\in\{1,,\ldots ,3(\gamma-1)-k\}$
$$f_i^*w_i^j\to w_\infty^j \text{ in }C_{loc}^\infty(\Si, g_\infty)\qquad \text{ as } i\to \infty,$$
where $f_i\colon \Si\to M\setminus \cup_{j=1}^{k} \si_i^j$ are the diffeomorphisms from the Deligne-Mumford compactness theorem for which $f_i^*g_i\to g_\infty$ in $C^\infty_{loc}(\Si, g_\infty)$.
\end{lemma}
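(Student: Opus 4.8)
Since this statement is quoted from \cite{RTZ}, I only sketch the argument. The plan is to prove it by a compactness argument for holomorphic quadratic differentials along the (differential--geometric) Deligne--Mumford degeneration, the constraint $b_0(\cdot,\Col(\si_i^j))=0$ being exactly what prevents $L^2$--mass from escaping into the pinching collars. \emph{Step 1: local estimates and extraction of limits.} Fix any $L^2(M,g_i)$--orthonormal bases $\{w_i^j\}_{j=1}^m$ of $W_i$, $m=\dim_\C W_i$. For a compact $K\subset\Si$ the injectivity radius of $g_\infty$ on $K$ is bounded below, so since $f_i^*g_i\to g_\infty$ in $C^\infty_{loc}(\Si)$ we have $f_i(K)\subset\de_K\thick(M,g_i)$ for some $\de_K>0$ and all large $i$; by the $C^k$--version of \eqref{est:Linfty-by-L1} the $f_i^*w_i^j$ are then bounded in $C^k(K)$ uniformly in $i$, for every $k$. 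Arzel\`a--Ascoli together with a diagonal argument over an exhaustion of $\Si$ and over $k$ give, after passing to a subsequence which we do not relabel, $f_i^*w_i^j\to w_\infty^j$ in $C^\infty_{loc}(\Si)$; since $f_i^*w_i^j$ is holomorphic for the complex structure of $f_i^*g_i$ and these complex structures converge to that of $g_\infty$, each $w_\infty^j$ is a holomorphic quadratic differential on $\Si$.

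\emph{Step 2: no loss of mass.} Lower semicontinuity of the $L^2$--norm under $C^\infty_{loc}$--convergence gives $\norm{w_\infty^j}_{L^2(\Si,g_\infty)}\le\liminf_i\norm{w_i^j}_{L^2(M,g_i)}=1$, so in particular $w_\infty^j\in\Hol(\Si,g_\infty)$. The crux is to prove the converse, namely that for every $\eps>0$ there is a compact $K_\eps\subset\Si$ with $\norm{w_i^j}_{L^2(M\setminus f_i(K_\eps),g_i)}^2\le\eps$ for all large $i$ and all $j$, and this is the one place where $b_0(w_i^j,\Col(\si_i^j))=0$ enters. On a collapsing collar $\Col(\si_i^j)=\{\abs{s}<X_i\}$, writing $w_i^j$ as in \eqref{eq:Laurent} with vanishing zeroth coefficient, the circle energies $s\mapsto 2\pi\sum_{n\ne0}\abs{b_n}^2e^{2ns}$ split into a non-decreasing and a non-increasing convex exponential series, each of which is bounded by its value at the corresponding endpoint $s=\pm X_i$ times a factor decaying exponentially in the distance to that endpoint; the endpoint values are in turn $\le C$ by \eqref{est:Linfty-by-L1}, since a fixed--width neighbourhood of $\partial\Col(\si_i^j)$ has uniformly bounded geometry. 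The technical work, carried out in \cite{RTZ}, is to transfer this to hyperbolic collar coordinates using the Keen--Randol collar (Lemma~\ref{lemma:collar}), keeping track of the $\sim(\ell_i^j)^{-2}$ size of $\abs{dz^2}_{g_i}$ there, to conclude, uniformly in $i$, that the $L^2(M,g_i)$--mass of $w_i^j$ on the part of $\Col(\si_i^j)$ at hyperbolic distance $\ge D$ from its boundary tends to $0$ as $D\to\infty$. Since a fixed cusp neighbourhood of $\Si$ corresponds under $f_i$ to a fixed--width neighbourhood of a collar boundary in $(M,g_i)$, removing from $\Si$ the sufficiently deep cusp neighbourhoods produces the required $K_\eps$. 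Combined with Step 1 this upgrades $\norm{w_\infty^j}_{L^2(\Si,g_\infty)}\le1$ to $\norm{w_\infty^j}_{L^2(\Si,g_\infty)}=1$ and, more generally, yields $\langle w_\infty^a,w_\infty^b\rangle_{L^2(\Si,g_\infty)}=\lim_i\langle w_i^a,w_i^b\rangle_{L^2(M,g_i)}=\de_{ab}$, so $\{w_\infty^j\}_{j=1}^m$ is $L^2$--orthonormal in $\Hol(\Si,g_\infty)$.

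\emph{Step 3: dimension count and conclusion.} It remains to see $m=\dim_\C\Hol(\Si,g_\infty)$. Extending $\{\si_i^1,\dots,\si_i^k\}$ to a pants decomposition of $(M,g_i)$ and combining \eqref{eq:ker-dell} with \cite[Theorem~3.7]{Wolpert82} shows that $\Upsilon\mapsto(b_0(\Upsilon,\Col(\si_i^1)),\dots,b_0(\Upsilon,\Col(\si_i^k)))$ maps $\Hol(M,g_i)$ onto $\C^k$, so $\dim_\C W_i=3(\gamma-1)-k$. On the limit side, the integrable holomorphic quadratic differentials on a finite--area hyperbolic surface of signature $(\gamma_\alpha,n_\alpha)$ form a space of complex dimension $3\gamma_\alpha-3+n_\alpha$, and summing over the components of $\Si$ --- each collapsed geodesic either lowers the genus by one and creates two punctures, or separates a component and creates one puncture on each side, in both cases decreasing the total by $1$ --- gives $\dim_\C\Hol(\Si,g_\infty)=3(\gamma-1)-k=m$. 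Hence the $L^2$--orthonormal system $\{w_\infty^j\}_{j=1}^m$ is a basis of $\Hol(\Si,g_\infty)$, which proves the lemma along the chosen subsequence; as the statement only asserts the existence of suitable sequences of bases, this is enough.

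\emph{Main obstacle.} The only non--soft ingredient is the non-concentration estimate of Step 2, i.e.\ the uniform--in--$i$ exponential decay of elements of $W_i$ into the collapsing collars; the delicate point is that this decay must beat the $\sim(\ell_i^j)^{-2}$ blow-up of $\abs{dz^2}_{g_i}$ so that the bound on the deep--collar mass is independent of $\ell_i^j$. The local estimates, the holomorphicity of the limit, and the dimension count are all routine.
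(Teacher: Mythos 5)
The paper does not prove this lemma --- it is quoted verbatim from \cite[Lemma~2.4]{RTZ} --- so there is no in-paper argument to compare against; judged on its own terms, your sketch is the standard and correct route, and it matches the strategy of the cited source: local $C^k$ compactness on the thick part, non-concentration in the pinching collars forced by $b_0=0$, and a dimension count via Wolpert's isomorphism on the approximating surfaces and Riemann--Roch (with simple poles) on the limit. Two small remarks. First, for Step 2 there is a shortcut you could have taken from within this paper: \eqref{est:W2} gives a uniform $L^\infty$ bound $\norm{w}_{L^\infty(M,g_i)}\leq C\norm{w}_{L^2(M,g_i)}$ for $w\in W_i$, and the hyperbolic area of the part of a collar at distance $\geq D$ from its boundary is $\leq Ce^{-D}$ uniformly in the core length, which yields the required tightness without re-running the Fourier analysis (your mode-by-mode argument also works, but one must use the pointwise comparison $\rho^{-2}(s)\approx \pi^2 e^{2c(s)}$, $c(s)$ the distance to the nearer collar boundary, rather than the crude bound $\rho^{-2}\leq C\ell^{-2}$, to see that the decay wins uniformly in $\ell$). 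Second, your closing claim that subsequential convergence ``is enough'' is not quite right as stated: the lemma asserts convergence of bases along the \emph{full} sequence $i$. This is easily repaired --- fix an orthonormal basis $\{w_\infty^j\}$ of $\Hol(\Si,g_\infty)$ in advance and argue by contradiction: any subsequence along which no orthonormal basis of $W_i$ is $\eps$-close to $\{w_\infty^j\}$ admits, by your Steps 1--3, a further subsequence with convergent bases, which after applying the fixed unitary matrix carrying the resulting limit basis to $\{w_\infty^j\}$ contradicts the assumption --- but the step should be said.
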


The above lemma implies in particular:
\begin{cor}\label{cor:W}
In the setting of Lemma~\ref{lemma:W} the following holds true.
For any element $\Omega_\infty\in \Hol(\Si, g_\infty)$ there exists a sequence $\Om_i\in W_i\subset \Hol (M, g_i)$ with $$\Vert \Om_i\Vert_{L^2(M, g_i)}=\Vert \Om_\infty\Vert_{L^2(\Si, g_\infty)} \text{ so that }
f_i^*\Omega_i \to \Omega_\infty \text{ in } C_{loc}^\infty(\Si, g_\infty).$$
Conversely, any sequence of elements $\Om_i\in W_i\subset \Hol(M,g_i)$ with $\norm{\Om_i}_{L^2(M,g_i)}=1$ has a subsequence so that $f_i^*\Om_i$
 converges in $C^\infty_{loc}(\Si,g_\infty)$ to some $\Omega_\infty\in \Hol(\Si, g_\infty)$ with $\norm{\Omega_\infty}_{L^2(\Si,g_\infty)}=1$.
\end{cor}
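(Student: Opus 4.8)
The plan is to deduce the corollary directly from the orthonormal basis statement of Lemma~\ref{lemma:W} by expanding everything in those bases, using that $W_i$ and $\Hol(\Si,g_\infty)$ have the same complex dimension $N\define 3(\gamma-1)-k$ (a consequence of the Deligne--Mumford setup underlying Lemma~\ref{lemma:W}). Throughout I would fix the orthonormal bases $\{w_i^j\}_{j=1}^{N}$ of $W_i$ and $\{w_\infty^j\}_{j=1}^{N}$ of $\Hol(\Si,g_\infty)$ provided by that lemma, for which $f_i^*w_i^j\to w_\infty^j$ in $C^\infty_{loc}(\Si,g_\infty)$, and recall that $\Hol$ is a complex vector space so that coefficients are taken in $\C$.

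For the first claim, given $\Om_\infty\in\Hol(\Si,g_\infty)$ I would write $\Om_\infty=\sum_{j=1}^N a_j w_\infty^j$ for coefficients $a_j\in\C$ and simply set $\Om_i\define\sum_{j=1}^N a_j w_i^j\in W_i$. Since the $\{w_i^j\}$ are $L^2(M,g_i)$-orthonormal, $\norm{\Om_i}_{L^2(M,g_i)}^2=\sum_j|a_j|^2=\norm{\Om_\infty}_{L^2(\Si,g_\infty)}^2$, which gives the required normalisation, and $f_i^*\Om_i=\sum_j a_j f_i^*w_i^j\to\sum_j a_j w_\infty^j=\Om_\infty$ in $C^\infty_{loc}(\Si,g_\infty)$ since the sum is finite and the coefficients are fixed.

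For the converse, given $\Om_i\in W_i$ with $\norm{\Om_i}_{L^2(M,g_i)}=1$ I would expand $\Om_i=\sum_{j=1}^N a_i^j w_i^j$; orthonormality gives $\sum_j|a_i^j|^2=1$, so the vectors $(a_i^1,\dots,a_i^N)$ lie on the unit sphere of $\C^N$. By compactness of that sphere I would pass to a subsequence along which $a_i^j\to a_\infty^j$ for each $j$, with $\sum_j|a_\infty^j|^2=1$. Then $\Om_\infty\define\sum_{j=1}^N a_\infty^j w_\infty^j\in\Hol(\Si,g_\infty)$ satisfies $\norm{\Om_\infty}_{L^2(\Si,g_\infty)}^2=\sum_j|a_\infty^j|^2=1$, and on any compact $K\subset\Si$ and in any $C^m$-norm,
\[
\norm{f_i^*\Om_i-\Om_\infty}_{C^m(K)}\leq\sum_{j=1}^N|a_i^j|\,\norm{f_i^*w_i^j-w_\infty^j}_{C^m(K)}+\sum_{j=1}^N|a_i^j-a_\infty^j|\,\norm{w_\infty^j}_{C^m(K)}\to 0,
\]
so $f_i^*\Om_i\to\Om_\infty$ in $C^\infty_{loc}(\Si,g_\infty)$.

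I do not expect any genuine obstacle here: the entire content is carried by Lemma~\ref{lemma:W}, and the corollary merely repackages it in a form that avoids tracking a whole basis. The only points needing care are that $N$ is independent of $i$, that coefficients live in $\C$, and that a finite sum of sequences converging in $C^\infty_{loc}$ with bounded coefficients again converges in $C^\infty_{loc}$ — all routine.
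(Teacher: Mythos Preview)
Your proof is correct and is essentially identical to the paper's own argument, which also expands $\Om_\infty$ and the $\Om_i$ in the orthonormal bases $\{w_\infty^j\}$, $\{w_i^j\}$ from Lemma~\ref{lemma:W} and passes to a subsequence of the coefficient vectors for the converse direction. You have simply written out in more detail the norm computations and the $C^m(K)$ estimate that the paper leaves implicit.
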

 
 The proof of this corollary immediately follows by writing a given element  $\Omega_\infty\in \Hol(\Si, g_\infty)$ in the form $\Om_\infty= \sum_{j=1}^{3(\gamma -1)-k} a_jw_\infty^j$ and  considering the corresponding sequence $\Om_i =\sum_{j=1}^{3(\gamma -1) -k} a_jw_i^j$ respectively, for the second part of the Corollary, by writing a sequence $\Om_i$ in the form $\Om_i =\sum_{j=1}^{3(\gamma -1) -k} a_j^iw_i^j$ and passing to a subsequence for which the coefficients $(a_j^i)_i$ converge.

To prove Lemma~\ref{lemma:lower-est-princ-part} we will furthermore need 

\begin{lemma}\label{lemma:princ-part}
In the setting of Lemma~\ref{lemma:W} the following holds true. 
Let  $\si_\infty\subset (\Si,g_\infty)$ 
be a simple closed geodesic and let 
$\si_i$ be the (unique) simple closed geodesic in $(M,g_i)$ which is homotopic to $(f_i)_*\si_\infty\subset M$. 
Then for any sequence $\Om_i\in W_i$ for which  
\beq \label{eq:conv-ass-Om} 
f_i^* \Om_i\to \Om_\infty \text{ in } C^\infty_{loc}(\Si)
\qquad  \text{ as } i\to \infty \eeq
we have 
\beqs 
\label{eq:conv-princ=parts} 
b_0(\Om_i,\Col(\si_i))\to b_0(\Om_\infty, \Col(\si_\infty))\qquad  \text{ as } i\to \infty .\eeqs
\end{lemma}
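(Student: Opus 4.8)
My plan is to rewrite the principal part $b_0(\cdot,\Col(\si))$ as an $L^2$-inner product over the collar and then pass to the limit, combining the $C^\infty_{loc}$-convergence provided by Deligne--Mumford with the convergence of the collar parametrisations. First I would record the geometry near $\si_\infty$: since $\si_\infty$ is a fixed compact geodesic of $(\Si,g_\infty)$ we have $\ell_\infty\define L_{g_\infty}(\si_\infty)>0$, and by the Collar lemma the closed collar $\ov{\Col(\si_\infty)}$ is a compact subset of $\Si$ disjoint from the cusp regions, so that $C^\infty_{loc}(\Si)$-convergence is uniform convergence on a fixed neighbourhood of $\ov{\Col(\si_\infty)}$. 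Because $[(f_i)_*\si_\infty]$ is a fixed homotopy class which is not among the pinched ones, the standard consequences of the Deligne--Mumford compactness theorem (compare \cite[Proposition~A.3]{RT-neg}) give $\ell_i\define L_{g_i}(\si_i)\to\ell_\infty$ and $f_i^{-1}(\si_i)\to\si_\infty$ smoothly; in particular, for large $i$, $\si_i$ is not one of the collapsing geodesics, and being freely homotopic to a curve disjoint from the $\si_i^j$ it is disjoint from each $\si_i^j$, so $\Col(\si_i)$ is disjoint from each $\Col(\si_i^j)\supset\si_i^j$. Hence $\Col(\si_i)\subset M\setminus\bigcup_j\si_i^j=f_i(\Si)$ and, comparing $g_i$- and $g_\infty$-distances (uniformly equivalent on compacta), $f_i^{-1}(\Col(\si_i))$ is eventually contained in a fixed compact neighbourhood of $\ov{\Col(\si_\infty)}$.

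The main point, and the step I expect to be the real obstacle, is the convergence of the collar charts. The collar chart $(s,\th)$ around a closed geodesic is determined, up to a symmetry $(s,\th)\mapsto(\pm s,\pm\th+\mathrm{const})$ of the parametrised collar, by the hyperbolic metric alone, being obtained from the Fermi coordinates of the geodesic by an explicit rescaling depending only on the length $\ell$. Since geodesics and their Fermi coordinates depend smoothly on the metric as long as the length is bounded away from $0$, and $f_i^*g_i\to g_\infty$ in $C^\infty_{loc}$ with $\ell_i\to\ell_\infty>0$, I would conclude that one may choose collar coordinates on $\Col(\si_i)$ and on $\Col(\si_\infty)$ for which $f_i^{-1}(\Col(\si_i))\to\Col(\si_\infty)$ and $f_i^*(dz^2)\to dz^2$ in $C^\infty_{loc}$, where $dz^2=(ds+\i d\th)^2$ is the quadratic differential occurring in the Fourier expansion \eqref{eq:Laurent} on the respective collar. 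The symmetry ambiguity is harmless, since $b_0$ is invariant under $(s,\th)\mapsto(\pm s,\pm\th+\mathrm{const})$. (If one preferred to avoid Fermi coordinates, one could instead use that each collar is conformally a fixed standard cylinder and that the uniformising maps converge under $C^\infty_{loc}$-convergence of the metrics, which is already part of the Deligne--Mumford picture used elsewhere in the paper.)

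Finally I would pass to the limit. Taking $\Psi=dz^2$ in \eqref{eq:orth-Four1} and using $b_0(dz^2,\Col(\si))=1$ yields, on any collar,
\beq
\label{eq:b0_inner_prop}
b_0(\Om,\Col(\si))=\frac{\langle\Om,dz^2\rangle_{L^2(\Col(\si))}}{\norm{dz^2}_{L^2(\Col(\si))}^2}.
\eeq
Pulling back the numerator on $\Col(\si_i)$ by $f_i$ gives $\langle f_i^*\Om_i,f_i^*(dz^2)\rangle_{L^2(f_i^{-1}(\Col(\si_i)),\,f_i^*g_i)}$; its integrand is built from $f_i^*\Om_i$, $f_i^*(dz^2)$ and the pointwise norm and volume element of $f_i^*g_i$, which converge in $C^\infty_{loc}$ to $\Om_\infty$, $dz^2$ and $g_\infty$ and are uniformly bounded on the fixed compact set identified above, while the domains $f_i^{-1}(\Col(\si_i))$ converge to $\Col(\si_\infty)$; dominated convergence then gives convergence of the numerator to $\langle\Om_\infty,dz^2\rangle_{L^2(\Col(\si_\infty))}$. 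The same argument with $\Om_i$ replaced by $dz^2$ gives $\norm{dz^2}_{L^2(\Col(\si_i))}^2\to\norm{dz^2}_{L^2(\Col(\si_\infty))}^2\in(0,\infty)$. Dividing and applying \eqref{eq:b0_inner_prop} over $\Col(\si_i)$ and over $\Col(\si_\infty)$ yields $b_0(\Om_i,\Col(\si_i))\to b_0(\Om_\infty,\Col(\si_\infty))$, which is the claim.
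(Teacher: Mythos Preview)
Your argument is correct and takes a genuinely different route from the paper's proof. You characterise $b_0$ as the $L^2$-projection onto $dz^2$ over the collar, formula \eqref{eq:b0_inner_prop}, and then pass to the limit by arguing that the collar parametrisations themselves converge under Deligne--Mumford, so that $f_i^*(dz^2)\to dz^2$ and the domains $f_i^{-1}(\Col(\si_i))\to\Col(\si_\infty)$; dominated convergence then finishes. The paper instead uses the length-derivative characterisation of the principal part, namely $\Rea(b_0(\Om,\Col(\si)))=-\frac{L_g(\si)}{2\pi^2}\frac{d}{dt}\big|_{t=0}L_{g+t\Rea(\Om)}(\si)$, which reduces the problem to a first-variation integral along the geodesic $\si$ only; it then needs nothing more than $C^1$-convergence of the curves $f_i^{-1}(\si_i)\to\si_\infty$, which it establishes by a short ODE/Hausdorff-distance argument, and treats real and imaginary parts separately by replacing $\Om_i$ with $\i\Om_i$. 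What each approach buys: the paper's route avoids any discussion of the two-dimensional collar chart and works entirely on the one-dimensional geodesic, so the analytic input is lighter (only $C^1$ convergence of curves, not smooth convergence of Fermi coordinates). Your route is conceptually more direct---$b_0$ is literally a Fourier coefficient, so project and pass to the limit---but it leans on the smooth dependence of Fermi/collar coordinates on the metric, which you correctly flag as the point requiring care; this is standard but is precisely what the paper's argument is designed to bypass.
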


\begin{proof}[Proof of Lemma~\ref{lemma:princ-part}]
We prove this lemma in two steps, first using the local smooth convergence of the metrics to obtain $C^1$-convergence of 
the geodesics $\tilde \si_i=f_i^*\si_i\subset (\Si,f_i^*g_i)$ to $\si_\infty$ and then in a second step using the relation \eqref{eq:length-evol} between the change of the length coordinate $d\ell(\Rea(\Om))$ and the principal part $\Rea(b_0(\Om))$ on the corresponding collar. We furthermore remark that it suffices to prove the convergence of the real parts $\Rea(b_0(\Om_i))$ as we may replace $\Om_i$ by $\i \Om_i$.

To begin with we note that there is a compact subset $K\subset \Si$
whose interior contains both the simple closed geodesic 
$\si_\infty \subset (\Si,g_\infty)$ as well as the 
simple closed geodesics $\tilde \si_i=f_i^*\si_i\subset (\Si,f_i^*g_i)$, for $i$ sufficiently large. Such a set can for example be obtained by setting $K=\de\thick(\Si,g_\infty)$
for
$\de<\half \arsinh(1)$ chosen so that 
the length of the shortest simple closed geodesic in $(\Si,g_\infty)$ is at least $4\de$. This choice of $\de$ ensures that 
$2\de\thin(\Si,g_\infty)$ is contained in the union of the 
collar neighbourhoods around the punctures, so the  
uniform convergence of the metrics on $K$ assures that for $i$ sufficiently large
$$\de\thin(\Si,g_\infty)\subset \bigcup_j f_i^* (\Col(\si_i^j)\setminus \si_i^j),$$
where $\Col(\si_i^j)$ are the collars around the collapsing geodesics $\si_i^j$ in $(M,g_i)$ and where we recall that $f_i$ is a diffeomorphism from $\Si$ to $ M\setminus(\bigcup_j \si_i^j)$.
Since the collar neighbourhoods of disjoint simple closed geodesics are disjoint, this ensures that $\tilde \si_i\subset \Col(\tilde \si_i)\subset \text{int}(K)$.

As the metrics converge smoothly in $K$ this allows us to obtain parametrisations $\gamma_i\colon S^1\to \tilde \si_i\subset (\Si,f_i^*g_i)$  (proportional to arc length) that converge in $C^1(\Si, g_\infty)$
 to a parametrisation $\gamma_\infty$ of $\si_\infty$ by a standard argument: convergence of the injectivity radii ensures that $\tilde \si_i\to \si_\infty$ in Hausdorff-distance (that may e.g. be computed w.r.t. $ g_\infty)$. Then using the geodesic equation we note that if the tangent vector $\gamma_i'(\th_1)$ were not close to the tangent vector at a nearby point on $\gamma_\infty$ then for some $c>0$ the point $\gamma_i(\th_1 +c)$ could not be close to $\si_\infty$. Appealing once more to the geodesic equation this then yields the desired $C^1$-convergence.

We now recall that 
along a curve of metrics $g(t)$ that evolves by 
 $\partial_t g(0)=\Rea \Om$ for some $\Om\in \Hol(M,g(0))$, the
evolution of the length $L_{g(t)}(\si(t))$ of the simple closed geodesic $\si(t)$ in $(M,g(t))$ that is  homotopic to $\si=\si(0)$  is determined by 
\eqref{eq:length-evol}.
As the geodesic $\si$ minimises length in its homotopy class, we have that
  \[ \frac{d}{dt}|_{t=0} L_{g(t)} (\si(t))= \frac{d}{dt} L_{g(t)} (\si(0)),\] 
  compare \cite[Rem.~4.11]{RT-neg}, so the principal part of a holomorphic quadratic differential $\Om\in \Hol(M,g=g(0))$ can be determined by 
  \beqs 
\label{eq:b0-by-dl}
\Rea(b_0(\Om,\Col(\si)))=-\frac{L_g(\si)}{2\pi^2}\frac{d}{dt}\vert_{t=0} L_{g+t\Rea(\Om)}(\si).
\eeqs
In the context of the lemma we thus find  that 
\beqa \label{est:diff-princ}
&\abs{\Rea\big(b_0(\Om_i, \Col(\si_i))-  b_0(\Om_\infty, \Col(\si_\infty))\big)}\\
&\qquad
=\babs{\frac1{2\pi^2}\big(  \ell_i \cdot 
\tfrac{d}{dt} L_{f_i^*(g_i +t\Rea (\Om_i))} (f_i^*\si_i)
 - \ell_\infty\cdot \tfrac{d}{dt} L_{g_\infty +t\Rea(\Om_\infty)} (\si_\infty)\big)}\\
  &\qquad \leq \,C \cdot |\ell_i-\ell_\infty|\cdot \abs{\tfrac{d}{dt}  L_{g_i +t\Rea (\Om_i)}(\si_i)} \\ 
   &\quad \qquad\, + C \left| \tfrac{d}{dt} \big( L_{f_i^*(g_i +t\Rea (\Om_i))} (f_i^*\si_i) - 
     L_{f_i^*(g_i +t\Rea (\Om_i))} (\si_\infty)\big)\right|\\
   &\quad \qquad \, + C \left| \tfrac{d}{dt} \big(  L_{f_i^*(g_i +t\Rea (\Om_i))} (\si_\infty) - 
     L_{g_\infty +t\Rea (\Om_\infty)} (\si_\infty)\big)\right|\\
     &\qquad =I+II+III
\eeqa where we remark that the real parts of holomorphic quadratic 
differentials need to be computed with respect to the corresponding conformal structure, where 
we write for short $\ell_i\define L_{g_i}(\si_i)\to \ell_\infty\define L_{g_\infty}(\si_\infty)>0$, 
and where derivatives with respect to $t$ are to be evaluated in $t=0$.
 
The first term is bounded by 
$$I\leq C \cdot |\ell_i-\ell_\infty| \cdot \frac{\abs{b_0(\Om_i,\Col(\si_i))}}{\ell_i} \leq C \cdot |\ell_i-\ell_\infty| \cdot\ell_i^{1/2}\cdot \norm{f_i^*\Om_i}_{L^2(K,f_i^*g_i)}
\to 0,$$
compare \eqref{est:b0-trivial-small}. 
    
To bound the third term in 
 \eqref{est:diff-princ} we remark that the convergence of the metrics implies that $\abs{\gamma_\infty'}_{f_i^*g_i}\to \abs{\gamma_\infty'}_{g_\infty}=\frac{\ell_\infty}{2\pi}>0$. For $i$ sufficiently large we may thus use \eqref{eq:conv-ass-Om}  and the convergence of the conformal structures to conclude that 
  \beqas 
 III
 =&\ C \bigg| \int_{S^1} \frac{ \Rea (f_i^*\Om_i)(\gamma_\infty',\gamma_\infty')} 
 {\abs{\gamma_\infty'}_{f_i^*g_i}}
  -\frac{\Rea (\Om_\infty)(\gamma_\infty',\gamma_\infty') }{\abs{\gamma_\infty'}_{g_\infty}}
  \bigg| \\
  \leq &\ C\norm{\Rea (f_i^*\Om_i)-\Rea (\Om_\infty)}_{L^\infty(K)}+C\norm{f_i^*g_i-g_\infty}_{L^\infty(K)}\cdot \norm{f_i^*\Om_i}_{L^\infty(K)}\to 0
 \eeqas
 as $i\to \infty$. 
Similarly, we may bound the second term in \eqref{est:diff-princ} by 
$$II\leq C\cdot \norm{\gamma_i- \gamma_\infty}_{C^1(K)}\cdot \norm{\Rea (f_i^*\Om_i)}_{C^1(K)}\to 0$$
where we may compute the norms with respect to any of the equivalent metrics, say w.r.t. $g_\infty$. Combined we thus obtain the claim of the lemma.
\end{proof}
Based on the above results we can now give the
\begin{proof}[Proof of Lemma~\ref{lemma:lower-est-princ-part}]
 We argue by contradiction. So let us assume that there exist positive numbers $\bar L$, $\eta$ and $\ell_0$ and a sequence of closed hyperbolic surfaces $(M, g_i)$ together with decomposing  collections $\mathcal{E}_i=\{ \si^1_i, \ldots, \si^{3(\gamma-1)}_i\}$ of simple closed geodesics which satisfy \eqref{ass:eta} and \eqref{ass:upperbound} 
for $\bar L$ and $\eta$ so that, after reordering the geodesics if necessary,  
 $$L_{g_i}(\si^{3(\gamma-1)}_i)\geq \ell_0 \text{ but } b_0 (\Om^{3(\gamma-1)}_i, \Col(\si^{3(\gamma-1)}_i))\to 0$$ 
 for the renormalised elements $\Om^j_i=-\Th^j_i \Vert \Th^j_i\Vert_{L^2(M,g_i)}^{-1}$  of the dual bases $\{\Th_i^j\}_{j=1}^{3(\gamma-1)}$ of $\Hol(M,g_i)$ corresponding to $\mathcal{E}_i$, compare \eqref{def:Om-by-Th}. 

After passing to a subsequence, and if necessary relabelling the geodesics $\{\si_i^j\}_{j=1}^{3(\gamma-1)-1}$, we may assume by the Deligne-Mumford compactness theorem, see e.g. \cite[Prop.~5.1]{Hu} or \cite[Prop.~A.3]{RT-neg}, that $(M, g_i)$ converges to a (possibly punctured and disconnected) hyperbolic surface by collapsing the $k\in \{0,\ldots, 3(\gamma -1) -1\}$ geodesics $\si_i^j$, $j=1,\ldots, k$. We note that in the above description only geodesics that are contained in $\mathcal{E}_i$ can collapse since each $\mathcal{E}_i$ satisfies assumption \eqref{ass:eta} for some fixed $\eta>0$.
 
 By construction $b_0 (\Om^{3(\gamma -1)}_i, \Col(\si_i^{j}))= 0$ for
 all $j<3(\gamma-1)$, in particular for $j=1,\ldots, k$,  so $\Om^{3(\gamma -1)}_i$ is an element of the space $W_i$ defined in \eqref{def:Wi}. By Corollary \ref{cor:W} we may thus pass to a
subsequence to obtain that 
$$f_i^*\Om_i^{3(\gamma -1)}\to \Om_\infty \text{ in } C_{loc}^\infty(\Si, g_\infty) \text{ for some } \Om_\infty\in \Hol(\Si, g_\infty) \text{ with } \Vert\Om_\infty\Vert_{L^2(\Si, g_\infty)}=1.$$ 
 We recall that since the simple closed geodesics $\si_i^{k+1}, \ldots, \si_i^{3(\gamma -1)}$ are disjoint from the collars $\Col(\si_i^j)$, $j=1,\ldots, k$, we may choose a compact subset $K\subset \Si$
 as in the proof of Lemma~\ref{lemma:princ-part}
so that for $i$ sufficiently large
$$ \tilde \si_i^j\define f_i^*\si_i^j \subset K \text{ for every } j=k+1,\ldots, 3(\gamma -1).$$
Since the metrics $f_i^*g_i$ converge smoothly to $g_\infty$ on $K$ 
we obtain from \eqref{ass:upperbound}
that  for $i$ sufficiently large
also
 $L_{g_\infty}(\tilde \si_i^j)\leq \bar L+1$ for each $k+1\leq j\leq 3(\gamma-1)$. 
As there are only finitely many homotopy classes of closed curves in $(\Si,g_\infty)$ which have a representative of length no more than $\bar L+1$, we may thus pass to a further subsequence in a way that ensures that for each $j=k+1, \ldots, 3(\gamma-1)$  the curves $\tilde\si_i^j$, $i\in \N$, are homotopic to each other. 
We denote the simple closed geodesic in $(\Si, g_\infty)$ that belongs to this homotopy class $[\tilde \si^j_i]$ by $\si_\infty^j$ and remark that $\{\si^j_\infty\}_{j=k+1}^{3(\gamma-1)}$ decomposes  $(\Si, g_\infty)$ into pairs of pants. 

We recall that $b_0(\Om^{3(\gamma-1)}_i, \Col(\si^{3(\gamma-1)}_i))\to 0$ while by definition $b_0(\Om^{3(\gamma-1)}_i, \Col(\si^{j}_i))=0$ for $j\leq 3(\gamma-1)-1$. Hence Lemma~\ref{lemma:princ-part} implies that 
\beq\label{eq:princ-parts-zero}
b_0(\Om_\infty, \Col(\si^{j}_\infty))=\lim_{i\to \infty} b_0(\Om^{3(\gamma-1)}_i, \Col(\si^{j}_i))=0\eeq
 for the whole decomposing collection $\{\si^j_\infty\}_{j=k+1}^{3(\gamma-1)}$ of $(\Si, g_\infty)$. 
 
However, as the map $\Upsilon\mapsto (\partial \ell_{k+1},\ldots, \partial \ell_{3(\gamma-1)})$ is an isomorphism from $\Hol(\Si,g_\infty)$ to $\C^{3(\gamma-1)-k}$, compare \cite[Theorem~3.7]{Wolpert82} and Remark \ref{rmk:extra-iso}, while \eqref{eq:dell-c} and \eqref{eq:princ-parts-zero} imply that the image of $\Om_\infty$ under this map is zero, we obtain that  $\Omega_\infty\equiv 0$ in contradiction to 
$\Vert \Omega_\infty\Vert_{L^2(\Si, g_\infty)}=1$.
\end{proof} 
In order to prove Proposition \ref{prop:RT-new}, we now want to use Lemma \ref{lemma:lower-est-princ-part} to relate the elements of the full dual basis $\{\Om^j\}$ of $\Hol(M,g)$ to the basis $\{\tilde \Om^j\}$ of $\ker(\partial\ell_1,\ldots,\partial \ell_k)$ for which the results of \cite{RT-neg} that we recalled in Lemma \ref{lemma:RT-neg} already establish precisely the type of bounds that we wish to prove for $\Om^j$. 
\begin{lemma}\label{lemma:Om} 
Let $(M,g)$, $\mathcal{E}$ and $\eta$ be as in Proposition~\ref{prop:RT-new}, where we can assume that $\mathcal{E}$ is ordered so that the simple closed geodesics of length no more than $2\bar \eta\define 2\min(\eta, \eta_1)$, $\eta_1$ as in Lemma~\ref{lemma:RT-neg} and Remark \ref{rmk:half}, are given by 
 $\si^1,\ldots, \si^k$,  $k\in \{0,\ldots, 3(\gamma-1)\}$. Let furthermore 
$\{\tilde\Om^j\}_{j=1}^k$ respectively  $\{\tilde \Th^j\}_{j=1}^k$  be the bases of $\ker(\partial \ell_1,\ldots, \partial \ell_k)^\perp$ from Lemma~\ref{lemma:RT-neg} respectively Corollary~\ref{cor:RT-neg} and let $\{\Om^j\}_{j=1}^{3(\gamma-1)}$ and  $\{ \Th^j\}_{j=1}^{3(\gamma-1)}$ be the bases of $\Hol(M,g)$ characterised by \eqref{def:Om-by-Th}.
 \newline
Then the following claims hold true for a constant 
 $C$ that depends only on the genus and the numbers $\eta$, $\bar L$ from \eqref{ass:eta} and \eqref{ass:upperbound}:
 \newline
For every $j=1,\ldots, 3(\gamma-1)$
  \beq 
 \label{est:Th-L2-upper} 
 \norm{\Th^j}_{L^2(M,g)}\leq C\ell_j^{-1/2}
 \eeq
while for $j=1,\ldots, k$ furthermore $\Th^j\sim \tilde \Th^j$ and $\Om^j\sim\tilde  \Om^j$ in the sense that 
\beq
\label{eq:Th-by-tilde-Th}
\Th^j= \tilde\Th^j+v^j \text{ for some } 
 v^j\in \ker(\partial \ell_1,\ldots, \partial \ell_k)
  \quad \text{ with }\norm{v^j}_{L^\infty(M,g)}\leq C \ell_j,
 \eeq 
respectively 
\beq
\label{eq:Om-by-tilde-Om}
\Om^j= a_j \tilde\Om^j+w^j \text{ for some } a_j\in \R^+ \text{ and } w^j\in \ker(\partial \ell_1,\ldots, \partial \ell_k)
\eeq
for which
\beq
\label{est:coeff_Om-by-tilde-Om}
\abs{1-a_j}\leq C\ell_j^3 \text{ while } \norm{w^j}_{L^\infty(M,g)}\leq C\ell_j^{3/2}.
\eeq
 \end{lemma}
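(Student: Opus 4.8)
The plan is to build the elements $\Th^j$ (for $j\le k$, i.e. those corresponding to the short geodesics $\si^1,\dots,\si^k$) out of the known basis $\tilde\Th^j$ of $W^\perp=\ker(\partial\ell_1,\dots,\partial\ell_k)^\perp$ by adding a correction that lives in $W=\ker(\partial\ell_1,\dots,\partial\ell_k)$, and to show this correction is small. First I would recall that both $\Th^j$ and $\tilde\Th^j$ are characterised purely by the values of $\partial\ell_i$, equivalently by their principal parts $b_0(\cdot,\Col(\si^i))$ via \eqref{eq:dell-c}. Since $\partial\ell_i(\Th^j)=\de^i_j$ for \emph{all} $1\le i\le 3(\gamma-1)$ while $\partial\ell_i(\tilde\Th^j)=\de^i_j$ only for $1\le i\le k$, the difference $v^j\define\Th^j-\tilde\Th^j$ satisfies $\partial\ell_i(v^j)=0$ for $i=1,\dots,k$, hence $v^j\in W$; this gives the decomposition \eqref{eq:Th-by-tilde-Th} for free. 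The whole content is the estimate $\norm{v^j}_{L^\infty}\le C\ell_j$.

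To get that estimate I would exploit the $L^\infty$-by-$L^1$ bound \eqref{est:W2} valid on $W=W_{\bar\eta}$: since $v^j\in W$, it suffices to bound $\norm{v^j}_{L^1(M,g)}$ by $C\ell_j$. Now $v^j=\Th^j-\tilde\Th^j$, and I would write $v^j=P_g^\Hol(\Upsilon)$ where $\Upsilon$ is a concrete quadratic differential constructed to have the right principal parts and be supported (essentially) on the collars $\Col(\si^i)$, $i\le k$, $i\ne j$, together with the part of $\Th^j$ detected by the "long" geodesics $\si^{k+1},\dots$; then apply \eqref{est:L1-proj}. Concretely: $\tilde\Th^j$ already has $b_0(\tilde\Th^j,\Col(\si^i))=-\de^{ij}\ell_j/\pi^2$ for $i\le k$, so $v^j$ has vanishing principal part on every $\Col(\si^i)$, $i\le k$; but $\Th^j$ additionally is constrained by $\partial\ell_i(\Th^j)=0$ for $i>k$, i.e. $b_0(\Th^j,\Col(\si^i))=0$ for the long geodesics too. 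Using \eqref{est:RT-neg1} (the collar-decay bound on $\tilde\Th^j=-\norm{\tilde\Th^j}\tilde\Om^j$, together with $\norm{\tilde\Th^j}_{L^2}\le C\ell_j^{-1/2}$ from Corollary \ref{cor:RT-neg}) one controls $|b_0(\tilde\Th^j,\Col(\si^i))|\le C\ell_j^{3/2}\cdot\|dz^2\|_{L^2(\Col(\si^i))}$-type quantities for the long $\si^i$; this is where the extra order in $\ell_j$ (beyond $\ell_j^{-1/2}$) comes from. One then corrects $\tilde\Th^j$ on the long collars by a differential of $L^1$-norm $\lesssim\ell_j$ whose $\Hol$-projection absorbs these spurious principal parts — here I would invoke Lemma \ref{lemma:lower-est-princ-part}/Lemma \ref{lemma:W} to guarantee that the map $\Upsilon\mapsto(b_0^i(\Upsilon))_{i>k}$ is, on the relevant subspace, invertible with norm bounded uniformly in terms of $\eta,\bar L,\gamma$, so the correction is genuinely $O(\ell_j)$ in $L^1$. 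Applying \eqref{est:L1-proj} then gives $\norm{v^j}_{L^1}\le C\ell_j$, and \eqref{est:W2} upgrades this to \eqref{eq:Th-by-tilde-Th}.

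For \eqref{est:Th-L2-upper}: when $\ell_j$ is not small (i.e. $j>k$) this is exactly the conclusion of Lemma \ref{lemma:lower-est-princ-part} restated via $\Om^j=-\Th^j/\norm{\Th^j}_{L^2}$, giving $\norm{\Th^j}_{L^2}\le C(\eta,\bar L,\gamma,\bar\eta)\le C\ell_j^{-1/2}$ trivially since $\ell_j$ is bounded below. When $j\le k$ it follows from \eqref{eq:Th-by-tilde-Th}, the bound $\norm{\tilde\Th^j}_{L^2}\le C\ell_j^{-1/2}$ from Corollary \ref{cor:RT-neg}, and $\norm{v^j}_{L^2}\le C\norm{v^j}_{L^\infty}\le C\ell_j$. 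Finally, \eqref{eq:Om-by-tilde-Om} and \eqref{est:coeff_Om-by-tilde-Om} are obtained by renormalising: $\Om^j=-\Th^j/\norm{\Th^j}_{L^2}$ and $\tilde\Om^j=-\tilde\Th^j/\norm{\tilde\Th^j}_{L^2}$, so $\Om^j=a_j\tilde\Om^j+w^j$ with $a_j=\norm{\tilde\Th^j}_{L^2}/\norm{\Th^j}_{L^2}>0$ and $w^j=-v^j/\norm{\Th^j}_{L^2}\in W$; one then estimates $\norm{w^j}_{L^\infty}\le C\ell_j\cdot\norm{\Th^j}_{L^2}^{-1}$. Getting the sharp exponent $\ell_j^{3/2}$ here (rather than $\ell_j^{1/2}$) requires knowing $\norm{\Th^j}_{L^2}\gtrsim\ell_j^{1/2}$, which follows from $\norm{\Th^j}_{L^2}\ge\tfrac{\ell_j}{\pi^2}\|dz^2\|_{L^2(\Col(\si^j))}\cdot(1-C\ell_j^2)\sim\ell_j^{1/2}$ using the explicit principal part $b_0(\Th^j,\Col(\si^j))=-\ell_j/\pi^2$, \eqref{eq:orth-Four1}, and $\|dz^2\|_{L^2(\Col(\si^j))}\sim\ell_j^{-1/2}$; and the bound $|1-a_j|\le C\ell_j^3$ comes from comparing $\norm{\Th^j}_{L^2}^2$ with $\norm{\tilde\Th^j}_{L^2}^2$, both of which equal $\tfrac{\ell_j^2}{\pi^4}\|dz^2\|_{L^2(\Col(\si^j))}^2$ up to errors that, via \eqref{eq:orth-Four1} and the collar-decay estimates, are $O(\ell_j^{5/2})\cdot O(\ell_j^{-1/2})=O(\ell_j^2)$ relative, hence $O(\ell_j^3)$ after the ratio expansion $a_j=1+O(\ell_j^3)$.

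The main obstacle I anticipate is the middle step: producing the correction on the long collars with the sharp $O(\ell_j)$ bound on its $L^1$-norm. This needs a \emph{quantitative} (genus-, $\eta$-, $\bar L$-dependent but $\ell_j$-uniform) inverse bound for the principal-part map restricted to $W\cap(\text{differentials vanishing on the short collars})$, which is precisely the compactness content distilled in Lemmas \ref{lemma:W}–\ref{lemma:lower-est-princ-part}; assembling that inverse bound together with the collar-decay estimate for $\tilde\Th^j$ on the long collars is the real work, the rest being bookkeeping with \eqref{est:L1-proj}, \eqref{est:W2}, \eqref{eq:orth-Four1} and the explicit collar norms.
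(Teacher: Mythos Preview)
Your approach is correct and takes a genuinely different route from the paper's. The paper works with the normalised elements $\Om^j$ first: it writes $\Om^j=a_j\big(\tilde\Om^j+\sum_{m>k}c_m^j\Om^m\big)$, bounds each $|c_m^j|$ directly by comparing principal parts on $\Col(\si^m)$ (using Lemma~\ref{lemma:lower-est-princ-part} to get $b_0^m(\Om^m)\ge\eps_0$ in the denominator and \eqref{est:RT-neg1} to bound the numerator $b_0^m(\tilde\Om^j)$), then extracts $|1-a_j|\le C\ell_j^3$ by equating $L^2$-norms; only afterwards does it deduce the $\Th^j$ statement by rescaling. You instead start from $\Th^j=\tilde\Th^j+v^j$, which has the pleasant feature that $v^j\in W$ is automatic and, since $\tilde\Th^j\in W^\perp$, the orthogonality $\langle\tilde\Th^j,v^j\rangle=0$ makes the computation of $a_j=\norm{\tilde\Th^j}_{L^2}/\norm{\Th^j}_{L^2}$ via $\norm{\Th^j}_{L^2}^2=\norm{\tilde\Th^j}_{L^2}^2+\norm{v^j}_{L^2}^2$ cleaner than your sketch suggests.

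Two small points. First, your middle step is phrased more indirectly than necessary: there is no need to project a hand-built non-holomorphic $\Upsilon$. Since $\{\Th^i\}_{i>k}$ is a basis of $W$ (they lie in $W$ and have the right count), you can write $v^j=\sum_{i>k}e_i\Th^i$ with $e_i=-\partial\ell_i(\tilde\Th^j)=\tfrac{\pi^2}{\ell_i}b_0^i(\tilde\Th^j)$; then $|b_0^i(\tilde\Th^j)|\le C\ell_j$ from \eqref{est:RT-neg1} as you say, $\ell_i\ge 2\bar\eta$, and $\norm{\Th^i}_{L^2}\le C$ by Lemma~\ref{lemma:lower-est-princ-part}, giving $\norm{v^j}_{L^2}\le C\ell_j$ directly, after which \eqref{est:W2} finishes. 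This is exactly your ``quantitative inverse bound'' made explicit, and it is not circular because Lemma~\ref{lemma:lower-est-princ-part} is proved independently. Second, there is a sign slip in your exponents: to get $\norm{w^j}_{L^\infty}\le C\ell_j^{3/2}$ from $\norm{v^j}_{L^\infty}\le C\ell_j$ you need $\norm{\Th^j}_{L^2}\gtrsim\ell_j^{-1/2}$ (not $\ell_j^{1/2}$); your own computation $\norm{\Th^j}_{L^2}\ge\tfrac{\ell_j}{\pi^2}\norm{dz^2}_{L^2(\Col(\si^j))}\sim\ell_j\cdot\ell_j^{-3/2}=\ell_j^{-1/2}$ gives the correct bound, so this is just a typo.
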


We note that  the upper bound \eqref{est:Th-L2-upper} 
 on the dual basis is equivalent to a lower bound \eqref{est:lower-hallo} on the principal part of the $\Om^j$, since $\Th^j$ and $\Om^j$ are related by 
\beq\label{eq:Om-Th}
\Th^j=-\frac{\ell_j}{\pi^2 b_0(\Om^j, \Col(\si^j))} \Om^j, 
\eeq
as $b_0^j(\Th^j)=\frac{\ell_j}{\pi^2}$, compare \eqref{eq:dell-c}. We also remark that proving such a lower bound on $b_0^j(\Om^j)$ can be seen to be equivalent to establishing bounds on the inverse of the isomorphism 
\beq
\label{def:iso}
\Hol(M,g)\ni \Upsilon\mapsto \left(\partial \ell_1(\Upsilon), \ldots, \partial \ell_{3(\gamma-1)}(\Upsilon)\right)\in \C^{3(\gamma-1)}.
\eeq

\begin{proof}[Proof of Lemma~\ref{lemma:Om}] \label{proof:Om}
We will first prove 
the claims \eqref{eq:Om-by-tilde-Om} and \eqref{est:coeff_Om-by-tilde-Om} on $\Om^j$, $j=1,\ldots, k$, then establish that \eqref{est:Th-L2-upper} holds true (for any $j$) and finally combine these two parts of the lemma to derive 
\eqref{eq:Th-by-tilde-Th}. 

So let 
$j\in \{1,\ldots, k\}$ and hence $\ell_j\leq 2 \bar \eta$, which allows us to apply the results of \cite{RT-neg} on the corresponding elements $\tilde \Om^j$ and $\tilde \Th^j$ which we recalled in Lemma~\ref{lemma:RT-neg} and Corollary \ref{cor:RT-neg}, as well as Remark \ref{rmk:half}.
 As $\{\Om^j\}_{j=1}^{3(\gamma-1)}$ is a basis of $\Hol(M,g)$, 
we can write each such 
$\tilde \Om^j=\sum_{i=1}^{3(\gamma-1)} d^j_i \Om^i$ for complex coefficients $d^j_i$ which we claim must be so that 
$d_j^j\in \R^+$ while $d_j^i=0 $ for $i\in\{1,\ldots, k\}$ with $i\neq j$: Indeed the first property follows 
since  the principal parts of  $\Om^j$ and $\tilde \Om^j$ on $\Col(\si^j)$ are both positive while 
$b_0(\Om^i,\Col(\si^j))=0$ for every
$i\neq j$, while the second property follows since for $i\in\{1,\ldots, k\}$ with $i\neq j$
 the only element in the above expression whose principal part on $\Col(\si^i)$ is non-zero is  $\Om^i$.

We may thus write each $\Om^j$, $j=1,\ldots,k$, in the form 
\beq \label{eq:proof-Om}  \Om^j=a_j \cdot \big[\tilde{\Om}^j + \sum_{m=k+1}^{3(\gamma-1)} c_{m}^j\Om^m \big] \text{ for some } a_j\in \R^+ \text{ and } c_{m}^j\in\C,
  \eeq
 and we claim that 
  \beq 
\label{claim:coeff-lemma-Om-proof}
\abs{1-a_j}\leq C\ell_j^3 \text{ while } 
 |c_{m}^j|\leq C\ell_j^{3/2},\quad  m\in\{k+1,\ldots, 3(\gamma-1) \}.
\eeq
As $\Om^m\in \ker(\partial \ell_1,\ldots,\partial \ell_k)$, $m \geq k+1$, and hence, by \eqref{est:W2}, $\norm{\Om^m}_{L^\infty(M,g)}\leq C_\eta \norm{\Om^m}_{L^1(M,g)}\leq C_\eta\norm{\Om^m}_{L^2(M,g)} \Area(M,g)^{1/2} \leq C(\eta,\gamma)$, 
 this will imply the two claims \eqref{eq:Om-by-tilde-Om} and \eqref{est:coeff_Om-by-tilde-Om} about $\Om^j$ made in the lemma.

To prove the claimed estimate on $c_{m}^j$ we compare the principal parts in \eqref{eq:proof-Om} on $\Col(\si^m)$, $m\geq k+1$: since $b_0(\Om^i,\Col(\si^m))=0$ for $i\neq m$,
 while, by Lemma \ref{lemma:lower-est-princ-part}, 
 $b_0(\Om^m,\Col(\si^m))\geq \eps_0(\bar \eta, \gamma, \bar L)>0$,
we may use \eqref{est:b0-trivial-2}
to
bound 
\beqas
 \abs{c_{m}^j}&=\babs{\frac{ b_0(\tilde{\Om}^j, \Col( \si^m))}{b_0(\Om^m, \Col( \si^m))}}\leq C |b_0(\tilde{\Om}^j, \Col(\si^m))|
\leq C \Vert \tilde{\Om}^j\Vert_{L^2(\Col(\si^m))}\leq C\Vert \tilde{\Om}^j\Vert_{L^2(M\setminus \Col(\si^j))}
\leq  C\ell_j^{3/2},
\eeqas
where the fact that  collars around disjoint geodesics are disjoint is used in the penultimate step (recall that $j<m$), while estimate \eqref{est:RT-neg1} of Lemma~\ref{lemma:RT-neg} is used in the last step.

Having thus established the bound on $c_{m}^j$ claimed in \eqref{claim:coeff-lemma-Om-proof} we now turn to the analysis of $a_j$ which is characterised by 
$a_j=\frac{b_0(\Om^j,\Col(\si^j))}{b_0(\tilde \Om^j,\Col(\si^j))}\in\R^+.$
Using \eqref{est:b0-lower-half} we obtain an initial 
bound of   
$$a_j=\frac{b_0(\Om^j,\Col(\si^j))}{b_0(\tilde \Om^j,\Col(\si^j))}\leq 2 b_0(\Om^j,\Col(\si^j))\norm{dz^2}_{L^2(\Col(\si^j))}\leq 2\norm{\Om^j}_{L^2(M,g)}=2. $$
To obtain the more precise bound on $a_j$ claimed in \eqref{claim:coeff-lemma-Om-proof} we first remark that 
 the elements  $\Om^m$, $m\geq k+1$, are almost orthogonal to 
 $\tilde \Om^j$. To be more precise, since $b_0(\Om^m,\Col(\si^j))=0$ while $\tilde \Om^j$ is essentially given by $b_0(\tilde \Om^j,\Col(\si^j))dz^2$, compare \eqref{est:RT-neg1}, we may 
use \eqref{eq:orth-Four1} to write
\beqas 
 \abs{ \langle \tilde{\Om}^j, {\Om}^m\rangle_{L^2(M,g)} } &=   \abs{\langle \tilde{\Om}^j - b_0(\tilde{\Om}^j, \Col(\si^j))dz^2 , {\Om}^m\rangle_{L^2(\Col(\si^j))} +   \langle \tilde{\Om}^j, {\Om}^m\rangle_{L^2(M\setminus \Col(\si^j))}}\\
  &\leq  C \Vert  \tilde{\Om}^j - b_0(\tilde{\Om}^j, \Col(\si^j))dz^2\Vert_{L^\infty(\Col(\si^j))}  + C \Vert  \tilde{\Om}^j \Vert_{L^\infty(M\setminus \Col(\si^j))}\\
& \leq C\ell_j^{3/2}.
  \eeqas
Comparing the norms of the two sides of \eqref{eq:proof-Om}, we may hence conclude that indeed
\beqas
\abs{1-a_j^2}&=\babs{\norm{\Om^j}_{L^2(M,g)}^2-\norm{a_j\tilde \Om^j}_{L^2(M,g)}^2 }\\
&\leq 2|a_j|\sum_m 
\abs{c_{m}^j}\abs{  \langle \tilde{\Om}^j, {\Om}^m\rangle_{L^2(M,g)} }+|a_j|^2\norm{ \sum_m c_{m}^j\Om^m}_{L^2(M,g)}^2
\leq C\ell_j^3,
\eeqas
as claimed, which completes the proof of the claims \eqref{eq:Om-by-tilde-Om} and \eqref{est:coeff_Om-by-tilde-Om} on $\Om^j$. 

We now turn to the proof of \eqref{est:Th-L2-upper}, which we recall is the only estimate that we need to prove for all $j\in\{1,\ldots,3(\gamma-1)\}$ rather then just for those $j$ for which $\ell_j\leq 2\bar\eta$. To this end we first choose $\ell_0\in (0,2\bar \eta)$ small enough so that \eqref{claim:coeff-lemma-Om-proof} ensures that if 
$\ell_j\leq \ell_0$ (and hence in particular $j\leq k$) then $a_j\geq \half$. For such indices we hence obtain from Remark \ref{rmk:half} and Lemma \ref{lemma:RT-neg} that
\beq \label{est:lower-hallo}
b_0^j(\Om^j)\norm{dz^2}_{L^2(\Col(\si^j))}\geq\epsilon_1
\eeq
holds true for any $\eps_1\leq \frac14$, while Lemma \ref{lemma:lower-est-princ-part} ensures that this bound holds true for any other $j\in\{1,\ldots,3(\gamma-1)\}$ for a constant $\eps_1$ that depends only on the genus, $\eta$ and $\bar L$. 

As $\Th^j$ is related to $\Om^j$ by \eqref{eq:Om-Th} we hence obtain the desired upper bound on $\norm{\Th^j}_{L^2}$ from 
\beqas  \norm{\Th^j}_{L^2(M,g)}  &\leq \frac{\ell_j\cdot \norm{dz^2}_{L^2(\Col(\si^j))}}{\pi^2 b_0( \Om^j, \Col(\si^j))\cdot \norm{dz^2}_{L^2(\Col(\si^j))}} \leq C\ell_j  \norm{dz^2}_{L^2(\Col(\si^j))}\leq C\ell_j^{-1/2}.\eeqas

To complete the proof of the lemma it now remains to show that 
$\Th^j=-\norm{\Th^j}_{L^2}\Om^j$, $j\leq k$, is described by \eqref{eq:Th-by-tilde-Th}. Multiplying the expression \eqref{eq:Om-by-tilde-Om} for $\Om^j$ that we have already proven with $-\norm{\Th^j}_{L^2}$, and using  that $\tilde \Om^j$ is a real multiple of $\tilde \Th^j$, allows us to write $\Th^j= c_j \tilde \Th^j +v_j$ for some $c_j\in\R$, later shown to be $c_j=1$,  and $v_j=-\norm{\Th^j}_{L^2}w_j$. The claimed bound on $\norm{w^j}_{L^\infty}$ hence follows from \eqref{est:coeff_Om-by-tilde-Om} and the fact that $\norm{\Th^j}_{L^2}\leq C\ell_j^{-\half}$. We finally recall that by definition 
$\partial \ell_j(\Th^j)=\partial \ell_j(\tilde \Th^j)=1$ and thus, by \eqref{eq:dell-c}, 
$b_0(\tilde \Th^j, \Col(\si^j))=-\frac{\ell_j}{\pi^2}= b_0(\Th^j, \Col(\si^j))$, while 
$v^j\in \ker ( \ell_j)$ and thus $b_0(v^j, \Col(\si^j))=0$. Hence the leading coefficient $c_j$ must
indeed be identically $1$.
\end{proof}

 Proposition~\ref{prop:RT-new} now follows by a short argument that combines the results from \cite{RT-neg} on $\tilde \Om^j$ that we recalled in Lemma~\ref{lemma:RT-neg} with Lemma~\ref{lemma:Om}.
\begin{proof}[Proof of Proposition~\ref{prop:RT-new}]
To begin with we recall that the upper bound in \eqref{est:RT-neg3-new} is trivially satisfied as $\norm{\Om^j}_{L^2(M,g)}=1$, compare \eqref{est:b0-trivial}, and that we have already established \eqref{est:RT-neg3-newest} in the above proof of Lemma \ref{lemma:Om}, compare \eqref{est:lower-hallo}.

In case that $\ell_j\geq 2\bar\eta$, for $\bar \eta$ as in Lemma \ref{lemma:Om},
the lower bound on the principal part in \eqref{est:RT-neg3-new}
is trivially satisfied if $C$ is chosen sufficiently large and also \eqref{est:RT-neg1-new} is trivially satisfied thanks to \eqref{est:W2}. 

On the other hand, for indices with $\ell_j\leq  2\bar\eta$, the claim  \eqref{est:RT-neg1-new} 
is a direct consequence of the corresponding bound \eqref{est:RT-neg1} for $\tilde \Om^j$ and 
the relations \eqref{eq:Om-by-tilde-Om} and \eqref{est:coeff_Om-by-tilde-Om} between $\Om^j$ and $\tilde \Om^j$ and the lower bound of \eqref{est:RT-neg3-new} follows from \eqref{est:RT-neg3}  as
$ b_0(\Om^j, \Col(\si^j))=a_jb_0(\tilde\Om^j, \Col(\si^j))$ with $a_j$ satifying \eqref{est:coeff_Om-by-tilde-Om}.

Finally, to estimate  the inner products, 
we combine \eqref{eq:orth-Four1} with the bounds from \eqref{est:RT-neg1-new} 
to obtain that for $i\neq j$
\begin{align*}
 |\langle \Om^i, \Om^j\rangle_{L^2(M,g)}|\leq&\  \Vert \Om^i-b_0(\Om^i, \Col(\si^i))dz^2\Vert_{L^\infty(\Col(\si^i))} \Vert \Om^j\Vert_{L^1(\Col(\si^i))}  \\
&\ + \Vert \Om^j-b_0(\Om^j, \Col(\si^j))dz^2\Vert_{L^\infty(\Col(\si^j))} \Vert \Om^i\Vert_{L^1(\Col(\si^j))}  \\
 &\ + \Vert \Om^i\Vert_{L^2(M\setminus \Col(\si^i))}\Vert \Om^j\Vert_{L^2(M\setminus \Col(\si^j))}\\
 \leq & C\ell_i^{3/2} \ell_j^{3/2}.\qedhere
\end{align*}
\end{proof}
From Proposition \ref{prop:RT-new} we also obtain the following useful estimates for the bases $\{\Om^j\}$ and $\{\Th^j\}$: 
\begin{rmk}\label{rmk:lower_lin_comb}
The uniform lower bound on the principal parts \eqref{est:RT-neg3-newest}, together with \eqref{eq:orth-Four1}, implies in particular that 
the estimate
\beqs \norm{\sum c_j\Om^j}_{L^2(M,g)}^2\geq \eps_1^2\cdot  \sum \abs{c_j}^2  \text{ for all } c_j\in\C \eeqs
holds true for the same constant $\eps_1>0$ for which also \eqref{est:RT-neg3-newest} holds true.
Combining 
\eqref{est:RT-neg3-new} and \eqref{est:RT-neg3-newest} with \eqref{est:sizes_on_collars}, \eqref{est:dz-lower} and \eqref{est:dz-L2-upper}  furthermore implies that 
\beq
\label{est:princ-parts-Om-Th}
c\ell_j^{3/2} \leq b_0^j(\Om^j)\leq C\ell_j^{3/2} 
\eeq
for some $c=c(\eta,\bar L,\gamma)>0$ and $C=C(\bar L)$. 
Finally we remark that the analogue of 
the estimates from Corollary~\ref{cor:RT-neg} remain valid for the $\Th^j$, namely, 
combining \eqref{est:princ-parts-Om-Th} and  \eqref{est:RT-neg1-new} with \eqref{eq:orth-Four1} gives
$\abs{\norm{\Th^j}_{L^2(M,g)}^2- \frac{\ell_j^2}{\pi^4}\norm{dz^2}_{L^2(\Col(\si^j))}^2}\leq C\ell_j^2$, which, when combined with  the expression for $\norm{dz^2}_{L^2}$ from \eqref{est:sizes_on_collars}, yields
\beq
\label{est:precise-norm-Th}
\abs{\norm{\Th^j}_{L^2(M,g)}^2-\tfrac{32\pi}{\ell_j}}\leq C\ell_j^2.
\eeq
\end{rmk}

In addition, it is useful to observe that for the elements $\tilde \Om^j$ considered in \cite{RT-neg} we obtain: 
\begin{rmk}\label{rmk:extending}
As we may extend 
an arbitrary 
collection $\{\si^1,\ldots, \si^k\}$ of disjoint simple closed geodesics  to a collection which decomposes $(M,g)$ into pairs of pants which satisfies \eqref{ass:upperbound} for some 
$\bar L=\bar L(\gamma, \max_{j=1,\ldots,k}(L_g(\si^j)))$, compare \cite[Theorem~3.7]{Hu}, it is now easy to prove that 
 Lemma~\ref{lemma:RT-neg} remains valid also without the  smallness assumption on the $\ell_j$'s and with constants that depend only on the numbers $\bar L$ and $\eta$ for which \eqref{ass:eta} and \eqref{ass:upperbound} are satisfied and as usual the genus $\gamma$. 
 In particular, for any such collection a uniform lower bound of 
\beq\label{est:b0-tilde-lower}
b_0^j(\tilde\Om^j)\norm{dz^2}_{L^2(\Col(\si^j))} \geq \tilde\eps_1
\eeq
holds true for a constant $\tilde \eps_1$ that depends only on $\gamma$, $\eta$ and $\bar L$.
\end{rmk}

This final remark is easily derived by combining the estimates on the full dual basis $\Om^j$ of $\Hol(M,g)$ corresponding to the above extended set of geodesics with the fact that $
\tilde \Om^j=\frac{\Om^j-P_g^{W}( \Om^j)}{\norm{ \Om^j-P_g^{W}( \Om^j)}_{L^2(M,g)}}$, as elements of $W=\ker(\partial \ell_1,\ldots, \partial \ell_k)$ have zero principal part on $\Col(\si^j)$, which, combined with \eqref{est:RT-neg3-newest}, yields $\eps_1\leq b_0^j(\Om^j)\norm{dz^2}_{L^2(\Col(\si^j))}\leq 
\norm{\Om^j-P_g^{W}( \Om^j)}_{L^2(M,g)}\leq 1$.

\subsection{Proof of Theorem \ref{thm:Dehn-twists-main} on the elements of $\Hol$ representing Dehn-twist}
\label{subsec:holo_twist}
$ $

Before we begin to prove our main result, Theorem \ref{thm:Dehn-twists-main}, on the elements of  $\Hol$  which generate only Dehn-twists we note that a 
\textit{non-horizontal} curve of hyperbolic metrics $g(t)$ which moves only by Dehn-twists around a given geodesic $\si^j$ can be explicitly be constructed as follows:
Let $\Col(\si^j)$ be the collar region around $\si^j$ which we recall is disjoint from the collars around all other geodesics which are disjoint from $\si^j$. We can then cut the collar off from the surface, twist it by angle $t$ and glue it back into the surface using the same gluing map.

The twisting on the collar can be obtained by pulling back the metric on this collar (not on the whole surface)
 with a diffeomorphism that is given in collar coordinates, compare Lemma~\ref{lemma:collar}, by
$$f_t\colon \Col(\si^j)\to \Col(\si^j),\ f_t(s,\th)=(s,\th+t\xi(s))$$
where we choose $\xi$ such that $\xi\equiv \pm \frac12$ 
for $s$ near $\pm X(\ell_j)$.  Thus near the ends of the collars
we just carry out a rotation by a fixed angle $\pm \frac12t$ so we can glue the collar back to the rest of the surface and thus obtain a metric whose twist coordinate $\psi^j$ has increased by $t$. This results in a smooth curve of complete hyperbolic metrics $(g(t))_t$ with $g(0)=g$ which evolves by 
$$\pt g(0)=k^j=
\begin{cases} 0 &\text{ on } M\setminus \Col(\si^j)\\
\xi'(s)\rho^2(s)(ds\otimes d\th+d\th\otimes ds)& \text{ on }  \Col(\si^j).\end{cases}
 $$
We note that we can equivalently view $k^j$ as the real part of the quadratic (but not holomorphic) differential $K^j$ which is supported on $\Col(\si^j)$
and there given in collar coordinates by 
$$ K^j=-\i \xi'(s)\rho^2(s) \, (ds+\i d\th)^2.$$
We now recall that neither the length nor the twist coordinates of a metric can be changed by pulling back the metric with a diffeomorphism that is homotopic to the identity (and of course defined on all of $M$ as opposed to the $f_t$ above). 
Hence the horizontal part 
$P_g^\Hol(K^j)$
of $\pt g(0)=\Rea(K^j)=L_Xg+\Rea(\Psi^j) $ induces the same Dehn-twist as $K^j$, so we can characterise the element $\Psi^j$ of $\Hol(M,g)$ that we analyse in the present section by 
$$\Psi^j=P_g^\Hol(K^j).$$

We note that since $\Rea(\Psi^j)$ leaves the length coordinates invariant, we have,  by \eqref{eq:length-evol}, that
\beq 
\label{eq:other-princ-part-Psi}
\Rea(b_0(\Psi^j,\Col(\si^i)))=0 \text{ for every } i=1,\ldots, 3(\gamma-1),
\eeq

while the above expression for $\Psi^j$ furthermore implies that for every $\Upsilon\in \Hol(M,g)$ 
\begin{align}
\langle \Upsilon,\Psi^j\rangle&=
\langle \Upsilon, K^j\rangle =\i b_0(\Upsilon, \Col(\si^j))\int_{\Col(\si^j)} \rho^2 \xi'\abs{dz^2}_g^2 dv_g
= 8\pi \i b_0(\Upsilon, \Col(\si^j)) \label{eq:inner-prod-Psi}\end{align}
since the other Fourier modes are orthogonal to $dz^2$ on every circle $\{s\}\times S^1$, in particular $\Psi^j \perp \ker(\partial \ell_j)$. 
We note that these  properties of $\Psi^j$ could alternatively be derived based on the length-twist duality of Wolpert \cite[Section~3]{Wolpert82}.

\begin{rmk} \label{rmk:extra-iso}
We note that the above expression and properties of the elements inducing Dehn-twists allow for a short proof of the fact that the map \eqref{def:iso} is an isomorphism, both in the setting of closed hyperbolic surfaces as considered in \cite[Thm.~3.7]{Wolpert82} and also for punctured hyperbolic surfaces as  considered in the proof of Lemma \ref{lemma:lower-est-princ-part} above.
\end{rmk}

 \begin{proof}[Proof of Theorem~\ref{thm:Dehn-twists-main}]
As $\Psi^j\perp \ker(\partial \ell_j)=\text{span}\{\Om^i\}_{i\neq j}$, 
we may write 
\beqa\label{eq:ren_psi_proof}
 \frac{\Psi^j}{\Vert \Psi^j\Vert_{L^2(M,g)}} = -a_j\cdot \i (\Om^j - P_g^{\ker\partial\ell_j} (\Om^j)),
  \eeqa 
  where we note that $a_j\in \R^+$
  since $ b_0^j(\Psi^j)=b_0(\Psi^j, \Col(\si^j))\in \i\R_-$, compare \eqref{eq:inner-prod-Psi}, while $b_0^j(\Upsilon)=0$ for  $\Ups\in \ker(\partial \ell_j)$. 
Clearly also $\abs{a_j}\geq 1$ as the element on the left hand side has norm $1\geq \norm{\Om^j - P_g^{\ker\partial\ell_j} (\Om^j)}_{L^2(M,g)} $.

The trivial upper bound \eqref{est:b0-trivial} on $b_0^j(\frac{\Psi^j}{\norm{\Psi^j}{L^2}})$ 
yields 
that 
 $$\abs{ a_j\cdot b_0^j(\Om^j)} \cdot  \norm{dz^2}_{L^2(\Col(\si^j))}= \abs{b_0^j\big(\tfrac{\Psi^j}{\Vert \Psi^j\Vert_{L^2(M,g)}}\big) }\cdot \norm{dz^2}_{L^2(\Col(\si^j))} \leq 1$$
and hence, thanks to the estimates \eqref{est:RT-neg3-new} and \eqref{est:RT-neg3-newest} for the principal part of $\Om^j$,
$$1\leq a_j\leq \frac{1}{\max(\eps_1,1-C\ell_j^{3})}\leq 1+C\ell_j^3,$$
resulting in the second claim of \eqref{est:coeff-Psi}.

We then note that since the principal part of $\Psi^j$ on each of the collars $\Col(\si^k)$ is purely imaginary and since 
 $\{\Om^k\}_{k\neq j}$ is a basis of $\ker(\partial \ell_j)$ we may  write the second term in  \eqref{eq:ren_psi_proof}  as
\beq \label{eq:writ_second_part_Psi}
a_j\i P_g^{\ker\partial \ell_j} (\Om^j)= \sum_{k\neq j} c_k^j \i\Om^k, \eeq
for coefficients $c_k^j\in \mathbb R$, which gives the claimed expression \eqref{eq:write-Psi-with-Om} for $\tfrac{\Psi^j}{\norm{\Psi^j}_{L^2}}$. 
 
 In order to prove the  estimates on the coefficients $c_k^j$ claimed in \eqref{est:coeff-Psi}
 we first prove that 
 $\norm{P_g^{\ker\partial \ell_j} (\Om^j)}_{L^2}\leq C\ell_j^{3/2}$, which, by  Remark~\ref{rmk:lower_lin_comb},
will give an initial bound of  $\abs{c_k^j}\leq C\ell_j^{3/2}$. 
To this end we note that since  $b_0^j(P_g^{\ker\partial \ell_j} (\Om^j))=0$ we may apply 
 \eqref{eq:orth-Four1} as well as the estimates \eqref{est:RT-neg1-new} on $\Om^j$ from Proposition \ref{prop:RT-new} to bound 
\beqas
   \Vert P_g^{\ker\partial \ell_j} (\Om^j)\Vert^2_{L^2(M,g)}&= \langle P_g^{\ker\partial \ell_j} (\Om^j), \Om^j\rangle_{L^2(M,g)}
 \\
 &=  \langle  P_g^{\ker\partial \ell_j} (\Om^j), \Om^j - b_0^j(\Om^j) dz^2 \rangle_{L^2(\Col(\si^j))}
   + \langle P_g^{\ker\partial \ell_j} (\Om^j), \Om^j\rangle_{L^2(M\setminus \Col(\si^j))}\\
   &\leq C\ell_j^{3/2} \Vert P_g^{\ker\partial \ell_j} (\Om^j)\Vert_{L^1(M,g)} 
   \eeqas
and hence to conclude that indeed
\beq \label{est:proj-ker}
   \Vert P_g^{\ker\partial \ell_j} (\Om^j)\Vert_{L^2(M,g)}\leq C\ell_j^{3/2}.
\eeq
To improve the obtained bound of $\abs{c_k^j}\leq C\ell_j^{3/2}$
we now consider the inner product of \eqref{eq:writ_second_part_Psi} with $\Om^k\in \text{ker}(\partial \ell_j)$ which, thanks to the estimates on $\Om^j$ from  Proposition~\ref{prop:RT-new}, yields
\begin{align*}
|c_k^j|\leq & |\langle \Om^k, a_j\i  P_g^{\ker\partial \ell_j} (\Om^j) \rangle| +\sum_{i\neq j,k} |c_i^j|\cdot |\langle \Om^k,\Om^i \rangle| \leq C \abs{\langle \Om^k,\Om^j\rangle} 
 +C \sum_{i\neq j,k} \ell_j^{3/2}\ell_k^{3/2} \ell_i^{3/2}\\
\leq& C\ell_j^{3/2}\ell_k^{3/2}
\end{align*}
as claimed in \eqref{est:coeff-Psi}.
We also remark that the claims \eqref{eq:write-Psi-with-Om} and \eqref{est:coeff-Psi} of the theorem, which we have just now established, imply in particular that \eqref{est:lemma-Psi1} holds true since $\norm{\Om^k}_{L^\infty(M,g)}\leq C\ell_k^{-1/2}$, compare \eqref{est:Linfty_Om_thick}.

It remains to show the bound \eqref{est:lemma-Psi4} on
 $\norm{\Psi^j}_{L^2}$ which, by \eqref{eq:inner-prod-Psi}, is given as
$$ \norm{\Psi^j}_{L^2(M,g)}={8\pi\i}b_0^j(\tfrac{\Psi^j}{\norm{\Psi^j}_{L^2(M,g)}})=a_j{8\pi}b_0^j(\Om^j).
$$ 
Combining the bound 
\eqref{est:RT-neg3-new} on $b_0^j(\Om^j)$ with the estimate \eqref{est:coeff-Psi} on $a_j$ yields
\beqas 
 \left|  \Vert \Psi^j\Vert_{L^2(M,g)} - 8\pi \Vert dz^2\Vert_{L^2(\Col(\si^j))}^{-1}\right|&\leq
 8\pi a_j \abs{b_0^j(\Om^j) -\Vert dz^2\Vert_{L^2(\Col(\si^j))}^{-1}}
 +
 8 \pi\abs{1-a_j}\cdot \Vert dz^2\Vert_{L^2(\Col(\si^j))}^{-1} \\
 &
  \leq C\ell_j^3 \norm{dz^2}_{L^2(\Col(\si^j))}^{-1}\leq C\ell_j^{9/2}
\eeqas as claimed,
where we used \eqref{est:sizes_on_collars} respectively \eqref{est:dz-lower} in the last step.
 \end{proof}

\subsection{Proof of Theorem \ref{thm:La} on the elements dual to $d\ell$}
\label{subsec:holo_la}
$ $

In this section we prove the desired properties of the elements $\La^j$ in two steps: In a first step we will construct a tensor $h^j\in T_g\M$ which induces the desired change of the Fenchel-Nielsen coordinates and hence 
projects down onto 
$\Rea \La^j$ under the projection $P_g^H\colon T_g\M\to H(g)=\Rea(\Hol(M,g))$ 
but is itself not horizontal. In a second step we will then analyse its projection using the estimates on the dual basis $\Th^j$ to the complex differentials $\partial \ell_j$ proven in Lemma~\ref{lemma:Om}.

Given a simple closed geodesic $\si^j\in \mathcal{E}$ we construct such an element $h^j\in T_g\M$ as follows. 
We decompose $M$ into pairs of pants by cutting along the curves in $\mathcal{E}$ and consider the (closures of the) pair(s) of pants  $P_i$ for which $\si^j$ is a boundary curve, where either $i=1,2$ with $\si^j$ corresponding to one boundary curve of each of the $P_i$'s, or $i=1$ with $\si^j$ corresponding to two of the boundary curves of $P_1$.

\begin{minipage}{0.44\textwidth}
We decompose these one or two pairs of pants further by cutting along the seams, i.e. the shortest geodesics between the boundary curves, resulting in one respectively two pairs of identical geodesic rectangular hexagons.
\newline

On these hexagons we can consider an evolution of the metric as described in the following Lemma~\ref{lemma:hex} where one should think of the sides $\Gamma_{a,b,c}$ as the seams of such a pair of pants $P_i$, while $\gamma_{a,b,c}$ correspond to (half)curves from $\mathcal{E}$ which have constant length unless they correspond to half of $\si^j$, compare Figure~\ref{fig_hexa_0}.
\end{minipage}\hfill
\begin{minipage}{0.64\textwidth}\begin{center}
\begin{tikzpicture}[scale=0.52]
 \draw (0,5)  .. controls (1,5.6) .. (2,7)  .. controls (3,6.52) ..  (4.8,6.87)  .. controls (5.5, 5.2) .. (6,4.5)  .. controls (5,3.6)  .. (4,1)   .. controls (3, 1.5) .. (2,1) .. controls (1.4,3) ..(0,5);

\begin{scope} 
\clip (0,5) node[left] {\large $p_1$}  .. controls (1,5.6) .. (2,7) node[above] {\large $p_2$} .. controls (3,6.52) ..  (4.8,6.87)  node[above] {\large  $p_3$} .. controls (5.5, 5.2) .. (6,4.5) node[right] {\large $p_4$} .. controls (5,3.6)  .. (4,1) node[below] {\large $p_5$}  .. controls (3, 1.5) .. (2,1) node[left] {\large $p_6$}.. controls (1.4,3) ..(0,5);

\draw (0,5) circle (0.2cm);
\draw (0.1,4.97) circle (0.01cm);

\draw (2,7) circle (0.2cm);
\draw (2.03,6.88) circle (0.01cm);

\draw (4.8,6.87) circle (0.2cm);
\draw (4.77,6.75) circle (0.01cm);

\draw (6,4.5) circle (0.2cm);
\draw (5.9,4.5) circle (0.01cm);

\draw  (4,1) circle (0.2cm);
\draw (3.95,1.1) circle (0.01cm);

\draw (2,1) circle (0.2cm);
\draw (2.05,1.1) circle (0.01cm);
\end{scope}

\draw (0.6,4.15) .. controls (2.3, 5) .. (3,6.64);
\draw (4,6.69) .. controls (4.4, 4.7) .. (5.3,3.8) ;
\draw (1.1,3.4) .. controls (3, 4) .. (5,3.4) ;

 \begin{scope} 
 \clip (0.6,4.15) node[left] {\large $q_1$}.. controls (2.3, 5) .. (3,6.64) node[above] {\large $q_2$}  .. controls (3.6, 6.6) ..  (4,6.75) node[above] {\large $q_3$}.. controls (4.4, 4.7) .. (5.3,3.8) -- (5,3.4) .. controls (3, 4) .. (1.1,3.4) -- (0.6, 4.15);

\draw (0.6,4.15) circle (0.2cm);
\draw (3,6.64) circle (0.2cm);
\draw (4,6.69) circle (0.2cm);
\draw (5.3,3.8) circle (0.2cm);
\draw (5,3.4) circle (0.2cm);
\draw (1.1,3.4) circle (0.2cm);

\end{scope}

\draw (0.7,4.1) circle (0.01cm);
\draw (3.1,6.55) circle (0.01cm);
\draw (3.9,6.6) circle (0.01cm);
\draw (5.2,3.78) circle (0.01cm);
\draw (4.95,3.5) circle (0.01cm);
\draw (1.15,3.5) circle (0.01cm);

\draw (0.8,6) node {\large $\gamma_a$};
\draw (5.6,5.7) node {\large $\gamma_b$};
\draw (3,1) node {\large $\gamma_c$};

\draw (1.6,5.5) node {\large $R_a$};
\draw (4.8,5.3) node {\large $R_b$};
\draw (3,2.7) node {\large $R_c$};
\draw (3,4.7) node {\large $\tilde{H}_0$};

\draw (5.5,3.1) node {\large $\Gamma_a$};
\draw (0.7,3.3) node {\large $\Gamma_b$};
\draw (3.3,7) node {\large $\Gamma_c$};

\end{tikzpicture}\\[-0.2cm]

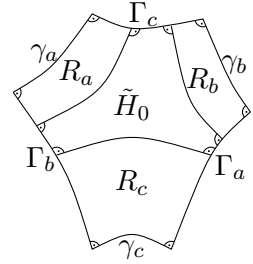
\captionof{figure}{A rectangular hexagon corresponding to half of a pair of pants $P_i$. Here $\Gamma_.$ are the seams of $P_i$ and $\gamma_.$ are (half)curves from $\mathcal{E}$. The collar parts $R_.$ and the interior hexagon $\tilde{H}_0$ are defined in Lemma~\ref{lemma:hex}.
}\label{fig_hexa_0}
                                \end{center}
\end{minipage}

\begin{lemma}\label{lemma:hex}
Let $\bar L>0$ be any given number and let $(H_t)_t$ be a family of rectangular, geodesic hexagons in the hyperbolic plane $(\Hyp, g_\Hyp)$
whose sides $\gamma_{a,t}, \Gamma_{c,t}, \gamma_{b,t},\Gamma_{a,t}, \gamma_{c,t}, \Gamma_{b,t}$ satisfy either 
\beq \label{ass:hex-1}
L_{g_\Hyp}(\gamma_{a,t})=a_t\equiv a, \qquad L_{g_\Hyp}(\gamma_{b,t})=b_t\equiv b, \qquad L_{g_\Hyp}(\gamma_{c,t})=c_t=\frac12(\ell+t)
\eeq
or 
\beq \label{ass:hex-2}
L_{g_\Hyp}(\gamma_{a,t})=a_t\equiv a, \qquad L_{g_\Hyp}(\gamma_{b,t})=b_t=L_{g_\Hyp}(\gamma_{c,t})=c_t= \frac12(\ell+t)
\eeq
for numbers $0<a_t,b_t,c_t\leq \frac12 \bar L$. 
We set $c_5\define \frac{w_{\bar{L}}}{4}$ for $w_{\bar{L}}$ the width of the collar around a simple closed geodesic of length $\bar L$ characterised by \eqref{eq:width}, and consider the 
subset $R_a(t)$  of the collar around $\gamma_{a,t}$ given by 
$$
R_a(t)=\{ p\in \Col(\ga_{a,t}): \dist(p, \partial \Col(\ga_{a,t}))\geq c_5\in (0,\tfrac{w_{2a_t}}{2})\}$$
as well as the analogue subsets $R_b(t)$ and $R_c(t)$  of $\Col(\gamma_{b,t})$ and $\Col(\gamma_{c,t})$, see Figure~\ref{fig_hexa_0}.
\newline
Then there exists 
a family of diffeomorphisms $F_t\colon (H_0, g_\Hyp)\to (H_t,g_\Hyp)$, which is generated by a smooth vector field $X$ on $H_0$, so that the following holds true: 
\begin{enumerate}[(i)]
\item\label{hex_iso} $F_t$ is an isometry from $R_a\define R_a(0)$ to $R_a(t)$, and in the setting of  \eqref{ass:hex-1} also from $R_b\define R_b(0)$ to $R_b(t)$.
\item\label{hex_collar} $F_t$ maps $R_c\define R_c(0)$ onto $R_c(t)$ and takes the form $$F_t(s,\theta)=(f_{c,t}(s),\theta) \qquad \text{ on } R_c$$
with respect to the corresponding collar coordinates, where  $f_{c,t}(\cdot)\colon (-X(2c),X(2c))\to (-X(2c_t),X(2c_t))$ is an odd function.  
 In the setting of  \eqref{ass:hex-2} the same property holds also for $R_b$.
\item\label{hex_inner} The change $\pt g(0)=L_Xg$ 
of the induced metrics $g(t)=F_t^*g_{\mathbb{H}}$ on 
$$\tilde H_0\define  H_0\setminus (R_a\cup R_b\cup R_c)$$ is bounded by 
$$\norm{\pt g(0)}_{L^\infty(\tilde H_0)}\leq C \ell$$
for a constant $C$ that depends only on  $\bar L$.
\item\label{hex_sym} On $\Gamma_{a,b,c}$ we have that the normal derivatives of 
odd order $\big(\frac{\partial}{\partial n_{\Gamma_{\cdot}}}\big)^{2j+1} X$, $j=0,1,\ldots$, vanish identically.
\end{enumerate}
\end{lemma}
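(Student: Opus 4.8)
The plan is to reduce the lemma to the hyperbolic trigonometry of right-angled hexagons, to prescribe $F_t$ explicitly on $R_a,R_b,R_c$ and on fixed-size neighbourhoods of the seams $\Gamma_{a,b,c}$, and then to interpolate over the remaining piece $\tilde H_0$. To begin with I would recall that a right-angled geodesic hexagon in $(\Hyp,g_\Hyp)$ is determined up to isometry by any three of its alternating side lengths, so $H_t$ is determined by $(a,b,c_t)$, and that the remaining side lengths and all interior distances are then given by the cosine rules for right-angled hexagons and depend real-analytically on $(a,b,c_t)$. The key point is that, for quantities describing the part of $H_t$ that stays a fixed distance away from the side $\gamma_{c,t}$, the parameter $c_t$ enters only through $\cosh c_t$: for instance the seam $\Gamma_{c,t}$ opposite to $\gamma_{c,t}$ has $\cosh L_{g_\Hyp}(\Gamma_{c,t})\,\sinh a\,\sinh b=\cosh a\,\cosh b+\cosh c_t$. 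Thus every such ``bulk'' quantity $Q=Q(a,b,c_t)$ is an even function of $c_t$, so $\partial_{c_t}Q\vert_{c_t=0}=0$ and hence $\partial_tQ\vert_{t=0}=\tfrac12\,\partial_{c_t}Q\vert_{c_t=\ell/2}=O(\ell)$ with a constant depending only on $\bar L$; this is the origin of the rate $O(\ell)$ in part (iii).

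For the construction of $F_t$ I would proceed piece by piece. As $L_{g_\Hyp}(\gamma_{a,t})\equiv a$ is constant, the collar piece $R_a(t)$ is isometric to $R_a$, by an isometry that can be chosen to depend smoothly on $t$; on $R_a$ I let $F_t$ be this isometry, so $L_Xg$ vanishes there, and under \eqref{ass:hex-1} the same applies to $R_b$. On $R_c$ — and, under \eqref{ass:hex-2}, also on $R_b$ — the length does change, from $\ell/2$ to $c_t$; here I set $F_t(s,\theta)=(f_{c,t}(s),\theta)$ in collar coordinates, choosing the odd diffeomorphism $f_{c,t}\colon(-X(2c),X(2c))\to(-X(2c_t),X(2c_t))$ so that it spreads the change of collar width uniformly over all of $R_c$, whose width in $s$ is of order $X(2c)\asymp\ell^{-1}$; this makes $\abs{L_Xg}$ only of order $\ell^{-1}$ on $R_c$, which is harmless as $R_c\not\subset\tilde H_0$. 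Finally, on a neighbourhood of fixed hyperbolic width of each seam $\Gamma_{a,b,c}$ I let $F_t$ reparametrise purely in the direction tangential to the seam and independently of the transverse coordinate, so that there the generating field has the form $X=X^{\tau}(\tau)\,\partial_\tau$; all normal derivatives of $X$ then vanish, which gives part (iv).

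It remains to extend $F_t$ over $\tilde H_0$ with $\norm{L_Xg}_{L^\infty(\tilde H_0)}\leq C\ell$. I would split $\tilde H_0$ into the strip $S\define\tilde H_0\cap\Col(\gamma_c)$, consisting of the points within distance $c_5$ of $\partial\Col(\gamma_c)$, and the complement $B\define\tilde H_0\setminus S$. The set $B$ has uniformly bounded diameter and carries the restriction of $g_\Hyp$ to a domain depending on $c_t$ evenly, so a suitable interpolation there satisfies $L_Xg=O(\ell)$ by the trigonometric discussion above. On $S$ the observation is that, in the rescaled Fermi coordinates $\big(\tau/(2c_t),\,n-w_{2c_t}\big)$ around the closed geodesic of length $2c_t$ underlying $\gamma_c$, the hyperbolic metric near $\partial\Col(\gamma_c)$ is $c_t$-independent to leading order, with corrections even in $c_t$ and of size $O(\ell^2)$; since in these coordinates $S$ corresponds, up to an $O(\ell)$ motion of its outer boundary, to a fixed bounded region, one may take $F_t$ on $S$ to be, up to an $O(\ell)$ correction, the identity in the rescaled coordinates, and obtain $L_Xg=\partial_tg=O(\ell)$ there.

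The hard part will be the last step: organising the overlaps so that a single smooth vector field $X$ exists which restricts correctly to the prescribed fields on $R_a,R_b,R_c$ and near the seams, satisfies the symmetry in (iv), and still obeys the sharp bound $\norm{L_Xg}_{L^\infty(\tilde H_0)}\leq C\ell$. This is delicate near the six corners of $H_0$, where the collar prescriptions, the tangential prescription near the seams, and the behaviours on $B$ and on $S$ all meet, and also along the common boundary of $R_c$ and $S$, where the collar coordinates and the rescaled Fermi coordinates differ by an explicit but $c_t$-dependent change of variables that has to be interpolated. Carrying this out should require only a careful choice of $c_5$ and of the cut-off functions, together with the uniform ($\bar L$-dependent, $\ell$-independent) control on the bulk hexagon and on the collar boundaries furnished by the Collar Lemma~\ref{lemma:collar} and the hexagon cosine rules.
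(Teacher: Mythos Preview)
Your outline is broadly on the right track and shares the paper's skeleton: prescribe $F_t$ as an isometry on $R_a$ (and $R_b$ in case \eqref{ass:hex-1}), as an explicit odd stretch in collar coordinates on $R_c$, and then extend over the inner hexagon $\tilde H_0$ using that its geometry moves only at rate $O(\ell)$. Your evenness heuristic is morally the right mechanism, but the statement ``$c_t$ enters only through $\cosh c_t$'' is not literally true for the quantities you actually need: for instance $L(\Gamma_{a,t})=\arsinh\big(\sinh L(\Gamma_{c,t})\sinh a/\sinh c_t\big)$ and $\tfrac{w_{2c_t}}{2}=\arsinh(1/\sinh c_t)$ each involve $\sinh c_t$, so individually their $t$-derivatives are of order $\ell^{-1}$. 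What saves the day is that in the combination $\dist(q_4(t),q_5(t))=L(\Gamma_{a,t})-\tfrac{w_{2b}}{2}-\tfrac{w_{2c_t}}{2}+2c_5$ the two $\log\sinh c_t$ contributions cancel exactly, leaving something that is indeed even in $c_t$ and differentiates to $O(\ell)$. The paper isolates this cancellation explicitly rather than appealing to an evenness principle, and you would have to do the same computation to justify your claim for the seam-segment lengths.

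The main structural difference is your decomposition $\tilde H_0=S\cup B$. The paper avoids this entirely: it observes that $\tilde H_0$ itself has uniformly bounded geometry (the curves $\beta_{a,b,c}$ have length comparable to $e^{-c_5}$ and geodesic curvature bounded by $1$, while the seam segments have length $\geq 2c_5$), and then reduces the extension problem to controlling the boundary data of $X$ on $\partial\tilde H_0$ in $C^1$. On $\beta_a$ this is trivial ($X\equiv 0$); on $\beta_c$ the field $X$ is forced by the prescription on $R_c$, and the paper obtains the required $C^1$ bound $\abs{X}+\abs{X'}\leq C\ell$ by differentiating the constant-curvature ODE $\nabla_{\beta_{c,t}'}\beta_{c,t}'=\kappa_{c,t}\abs{\beta_{c,t}'}(\beta_{c,t}')^\perp$ in $t$ and applying Gronwall, using that $\tfrac{d}{dt}\kappa_{c,t}$, $\tfrac{d}{dt}L(\beta_{c,t})$ and the motion of the initial point $q_6(t)$ are all $O(\ell)$. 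This ODE step is the piece your outline does not have; your rescaled-Fermi-coordinate argument on $S$ is a plausible substitute (and your computation that the metric there is $c_t$-independent up to $O(\ell^2)$ is correct), but it transfers the difficulty to the matching along $\beta_c=\partial R_c\cap\partial S$, which in the end requires essentially the same $C^1$ control that the ODE argument provides directly. The paper's route is shorter here: once $X\vert_{\partial\tilde H_0}$ is known to be $O(\ell)$ in $C^1$, a single smooth extension (chosen with the reflection symmetry across the seams to enforce (iv)) finishes the proof, with no need to treat the strip $S$ separately or to prescribe $X$ on tubular neighbourhoods of the seams.
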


Returning to the construction of a  tensor $h^j\in T_g\M$ that induces the desired change of the Fenchel-Nielsen coordinates
we note that \eqref{ass:hex-1} corresponds to having two pairs of pants adjacent to $\si^j$,  while the case that $\si^j$ only has one adjacent pairs of pants for which it corresponds to two boundary curves is treated by considering the case \eqref{ass:hex-2}.

We furthermore remark that \eqref{hex_sym} imposes compatibility conditions  on the sides $\Gamma_{a,b,c}$ of the hexagon corresponding to seams of the pairs of pants that guarantee that the resulting tensor $\pt g(0)=L_Xg$ can be extended to a smooth tensor on $P_{1,2}$ respectively to $P_1$  by symmetry. Since the function $f_{\cdot}$ in part \eqref{hex_collar} of Lemma~\ref{lemma:hex} is odd we may glue the resulting tensors on $P_{1,2}$ respectively on $P_1$
along $\si^j$ to obtain a smooth tensor on 
the closed set $P\subset M$ which corresponds to the (union of the) pairs of pants that are adjacent to $\si^j$.
By \eqref{hex_iso} $\pt g(0)$ is zero near the boundary of $P$ so we may extend the obtained tensor by zero to the rest of $M$ to finally obtain  an element $h^j$ of $T_g\M$ which, thanks to \eqref{ass:hex-1} resp. \eqref{ass:hex-2}, induces the desired change of the Fenchel-Nielsen coordinates.
As a result we therefore obtain  

\begin{cor}\label{cor:h}
 Let $(M,g)$ be a hyperbolic surface, let $(\ell_i, \psi_i)_{i=1}^{3(\gamma-1)}$ be the Fenchel-Nielsen coordinates corresponding to a collection $\mathcal{E}$ of disjoint simple closed geodesics 
 which decomposes $M$ into pairs of pants and let $\eta\in (0,\arsinh(1))$, $\bar L$ be constants for which \eqref{ass:eta} and \eqref{ass:upperbound} hold true. 
  Then for every $j\in \{1,\ldots, 3(\gamma-1)\}$ there exists a tensor $h^j\in T_g\mathcal{M}_{-1}$ 
 such that 
 \beq \label{evol_l}
 d\ell_i(h^j)=\de_{i}^j \text{ and } d\psi_i(h^j)=0 \text{ for every } i=1,\ldots, 3(\gamma-1)
 \eeq
so that on the subset 
$$\Col_{c_5}(\si^j)\define  \{p\in \Col(\si^j): \text{dist}(p,\partial \Col(\si^j))\geq c_5\},$$ $c_5=c_5(\bar L)>0$ as in Lemma~\ref{lemma:hex}, of the collar $\Col(\si^j)$
the tensor $h^j$ takes the form 
\beq \label{eq:form-h-collar}
h^j=\xi_1(s)(ds^2-d\th^2) +\xi_2(s)(ds^2+d\th^2) \eeq 
with respect to collar coordinates $(s,\th)$
while 
\beqs \supp(h^j)\setminus \Col_{c_5}(\si^j)\subset \tilde \eta \thick (M,g)
\eeqs
for a number $\tilde\eta=\tilde \eta(\bar L, \eta)>0$
and so that for  a constant $C=C(\eta,\bar L)$
\beq 
\label{est:h_L1}
\Vert h^j\Vert_{L^\infty(M\setminus \Col_{c_5}(\si^j))}\leq C\ell_j.\eeq
\end{cor}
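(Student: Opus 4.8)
The plan is to obtain $h^j$ by the explicit cut‑and‑paste construction sketched above the statement, deforming the metric only on the union $P\subset M$ of the one or two pairs of pants of the decomposition $\mathcal{E}$ adjacent to $\si^j$, and then to read off the four claimed properties from Lemma~\ref{lemma:hex}. First I would cut each such pair of pants along its three seams into a pair of congruent right‑angled geodesic hexagons. On each resulting hexagon $H_0$ I would label the three sides coming from boundary geodesics of the pair of pants by $\gamma_a,\gamma_b,\gamma_c$ so that $\gamma_c$ is one of the two arcs making up $\si^j$, while $\gamma_a$ (and $\gamma_b$, unless $\si^j$ occurs as two boundary curves of a single pair of pants) are arcs of geodesics in $\mathcal{E}\setminus\{\si^j\}$; by \eqref{ass:upperbound} the lengths of $\gamma_{a}$, $\gamma_b$, and by standard pair‑of‑pants geometry the seam lengths, are bounded in terms of the genus $\gamma$ and $\bar L$. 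Applying Lemma~\ref{lemma:hex} with $\ell=\ell_j$ --- using assumption \eqref{ass:hex-1} when $\si^j$ bounds two distinct pairs of pants and \eqref{ass:hex-2} when it bounds a single one twice --- then produces on each $H_0$ a vector field $X$ and diffeomorphisms $F_t$, hence a symmetric $(0,2)$‑tensor $\pt g(0)=L_Xg$.

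The second step is the gluing. By \eqref{hex_sym} all odd‑order normal derivatives of $X$ vanish on the seams $\Gamma_{a,b,c}$, so the even reflection of $L_Xg$ across the seams is smooth, and this glues the two hexagons of each pair of pants into a smooth deformation of that pair of pants. By \eqref{hex_collar} the diffeomorphisms are purely radial near $\si^j$, of the form $F_t(s,\th)=(f_{c,t}(s),\th)$ with $f_{c,t}$ odd in $s$, so the deformations of the one or two pairs of pants match smoothly across $\si^j$; and by \eqref{hex_iso} the deformation is trivial on a collar neighbourhood of every boundary curve of $P$ other than $\si^j$, so we may extend it by $0$ to all of $M$. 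Each $g(t)$ so obtained is hyperbolic --- it is glued from pulled‑back pieces of $(\Hyp,g_\Hyp)$ and equals $g$ off $P$ --- so $h^j\define\pt g(0)\in T_g\M$.

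Finally I would verify \eqref{evol_l}, \eqref{eq:form-h-collar} and \eqref{est:h_L1}. By \eqref{ass:hex-1}/\eqref{ass:hex-2} the length of $\si^j$ along $g(t)$ is $\ell_j+t$ and the lengths of all other curves in $\mathcal{E}$ are unchanged, and since $g(t)\equiv g$ near every $\si^i$ with $i\neq j$ the corresponding geodesics are unaffected; since $F_t$ is radial on $\Col(\si^j)$ it induces no rotation there and hence no change of twist; this gives $d\ell_i(h^j)=\de_i^j$ and $d\psi_i(h^j)=0$. On $R_c$, which contains $\Col_{c_5}(\si^j)$, the tensor $h^j$ is the $t$‑derivative at $t=0$ of the pullback of $g_\Hyp$ under $(s,\th)\mapsto(f_{c,t}(s),\th)$, which in collar coordinates is a multiple of $ds^2$ plus a multiple of $d\th^2$ with coefficients depending only on $s$, hence is of the form \eqref{eq:form-h-collar}. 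Outside $\Col_{c_5}(\si^j)$ the tensor is supported in the union of the interior hexagons $\tilde H_0$; these lie in the $\tilde\eta$‑thick part of $(M,g)$ for some $\tilde\eta=\tilde\eta(\bar L,\eta)>0$, because the only short geodesics meeting $P$ are boundary curves of $P$ --- hence elements of $\mathcal{E}$ by \eqref{ass:eta} --- and the deep parts of their collars ($R_a$, $R_b$, $R_c$) have been removed in passing from $H_0$ to $\tilde H_0$; on each $\tilde H_0$ estimate \eqref{hex_inner} gives $\norm{h^j}_{L^\infty}\leq C\ell_j$ with $C=C(\bar L)$, which yields \eqref{est:h_L1}. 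The genuinely delicate analysis is all packaged inside Lemma~\ref{lemma:hex}; the main work left for the corollary is the bookkeeping of the two gluings --- across the seams via \eqref{hex_sym} and across $\si^j$ via the oddness in \eqref{hex_collar} --- together with the (standard) fact that a purely radial deformation of a collar leaves the associated twist coordinate fixed.
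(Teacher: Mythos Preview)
Your proof is correct and follows essentially the same route as the paper: the construction of $h^j$ by gluing the hexagon deformations from Lemma~\ref{lemma:hex} (via \eqref{hex_sym} across the seams and the oddness of $f_{c,t}$ across $\si^j$), the extension by zero via \eqref{hex_iso}, and the verification of \eqref{evol_l}, \eqref{eq:form-h-collar}, \eqref{est:h_L1} and the support claim all match the paper's argument. One small correction: your aside that ``the seam lengths are bounded in terms of $\gamma$ and $\bar L$'' is false in general (the seam opposite a very short boundary geodesic is long), but this is harmless since Lemma~\ref{lemma:hex} only requires $a_t,b_t,c_t\leq\tfrac12\bar L$, which follows directly from \eqref{ass:upperbound}.
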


Before giving the proof of Lemma~\ref{lemma:hex} we complete the

\begin{proof}[Proof of Corollary~\ref{cor:h}]
It remains to show that the tensor $h^j\in T_g\M$, which we obtained above by gluing together the tensors $\pt g=L_Xg$ from Lemma~\ref{lemma:hex}, has the desired properties. 

We have already observed that $h^j$ induces the desired change  \eqref{evol_l} of the Fenchel-Nielsen coordinates and note that 
\eqref{est:h_L1} is an immediate consequence 
 of part \eqref{hex_inner} of  Lemma~\ref{lemma:hex}. 
 Furthermore $\pt g=L_Xg$ has the desired form \eqref{eq:form-h-collar} on  $\Col_{c_5}(\si^j)$ as on this set the vector field $X$ generating $F_t$ has the form $X(s,\theta)=\xi(s) \frac{\partial}{\partial s}$ for some function $\xi$, while the metric is of course given by $g=\rho^2(s)(ds^2+d\th^2)$.
We furthermore
note that part \eqref{hex_iso} of of  Lemma~\ref{lemma:hex} ensures that
points in $\supp(h^j)\cap \Col(\si^i)$, $i\neq j$, have distance no more than $c_5$ from $\partial \Col(\si^i)$ 
which, by \eqref{est:rho-ends-of-collar}, means that their injectivity radius is bounded from below by a constant depending only on $c_5$ and hence $\bar L$. Since $M\setminus \bigcup_i \Col(\si^i)\subset \eta\thick(M,g)$, compare \eqref{ass:eta}, this finally yields the claim on the support of $h^j$. 
\end{proof}

\begin{proof}[Proof of Lemma~\ref{lemma:hex}]
Given a family $H_t$ of hexagons as described in the lemma we denote by 
 $p_i(t)$ the vertices of the hexagon $H_t$ as shown in 
Figure~\ref{fig_hexa} and note that 
since the length of the geodesics $\ga_{a,t}$ is constant, we may assume without loss of generality that the corresponding sides of $H_0$ and $H_t$ coincide, i.e. that $p_1(t)=p_1$ and $p_2(t)=p_2$ for every $t$ where we abbreviate $p_i=p_i(0)$.
Together with the prescribed lengths $a_t, b_t, c_t$ of the alternate sides $\ga_{a,t}, \ga_{b,t}$ and $\ga_{c,t}$ this determines the subset $H_t$ in the hyperbolic plane.

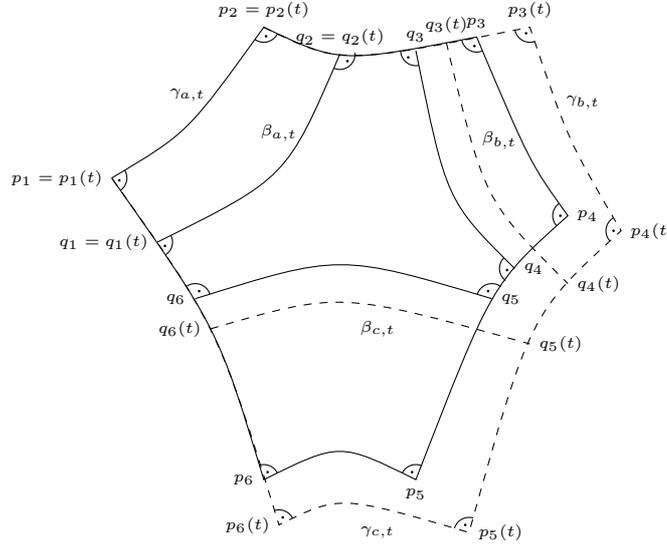
\begin{figure}[h!]
\centering  \begin{tikzpicture}
 \draw (0,5) node[left] {\tiny $p_1=p_1(t)$}  .. controls (1,5.6) .. (2,7) node[above] {\tiny $p_2=p_2(t)$} .. controls (3,6.52) ..  (4.8,6.87)  node[above] {\tiny  $p_3$} .. controls (5.5, 5.2) .. (6,4.5) node[right] {\tiny $p_4$} .. controls (5,3.6)  .. (4,1) node[below] {\tiny $p_5$}  .. controls (3, 1.5) .. (2,1) node[left] {\tiny $p_6$}.. controls (1.4,3) ..(0,5);

\begin{scope} 
\clip (0,5) node[left] {\tiny $p_1=p_1(t)$}  .. controls (1,5.6) .. (2,7) node[above] {\tiny $p_2=p_2(t)$} .. controls (3,6.52) ..  (4.8,6.87)  node[above] {\tiny  $p_3$} .. controls (5.5, 5.2) .. (6,4.5) node[right] {\tiny $p_4$} .. controls (5,3.6)  .. (4,1) node[below] {\tiny $p_5$}  .. controls (3, 1.5) .. (2,1) node[left] {\tiny $p_6$}.. controls (1.4,3) ..(0,5);

\draw (0,5) circle (0.2cm);
\draw (0.1,4.97) circle (0.01cm);

\draw (2,7) circle (0.2cm);
\draw (2.03,6.88) circle (0.01cm);

\draw (4.8,6.87) circle (0.2cm);
\draw (4.77,6.75) circle (0.01cm);

\draw (6,4.5) circle (0.2cm);
\draw (5.9,4.5) circle (0.01cm);

\draw  (4,1) circle (0.2cm);
\draw (3.95,1.1) circle (0.01cm);

\draw (2,1) circle (0.2cm);
\draw (2.05,1.1) circle (0.01cm);
\end{scope}

\draw[dashed] (2,7)  .. controls (3,6.5) ..  (5.5,7) node[above] {\tiny $p_3(t)$}
 .. controls (6, 5.5) .. (6.7,4.3) node[right] {\tiny $p_4(t)$}
 .. controls (5.5,3.2)  .. (4.7,0.3) node[right] {\tiny $p_5(t)$}  .. controls (3, 0.8) .. (2.2,0.4) node[left] {\tiny $p_6(t)$}.. controls (1.4,3.02) ..(0,5);

\begin{scope} 
 \clip (2,7)  .. controls (3,6.5) ..  (5.5,7) node[above] {\tiny $p_3(t)$}
 .. controls (6, 5.5) .. (6.7,4.3) node[right] {\tiny $p_4(t)$}
 .. controls (5.5,3.2)  .. (4.7,0.3) node[right] {\tiny $p_5(t)$}  .. controls (3, 0.8) .. (2.2,0.4) node[left] {\tiny $p_6(t)$}.. controls (1.4,3.02) ..(0,5);
\draw (5.5,7) circle (0.2cm);
\draw (5.45,6.92) circle (0.01cm);

\draw (6.7,4.3) circle (0.2cm);
\draw (6.6,4.3) circle (0.01cm);

\draw (4.7,0.3) circle (0.2cm);
\draw (4.65,0.4) circle (0.01cm);

\draw (2.2,0.4) circle (0.2cm);
\draw (2.25,0.5) circle (0.01cm);
\end{scope}

\draw (0.6,4.15) node[left] {\tiny $q_1=q_1(t)$}.. controls (2.3, 5) .. (3,6.64) node[above] {\tiny $q_2=q_2(t)$};

\draw (4,6.69) node[above] {\tiny $q_3$}.. controls (4.4, 4.7) .. (5.3,3.8) node[right] {\tiny $q_4$};

\draw[dashed] (4.4,6.8) node[above] {\tiny $q_3(t)$}.. controls (4.9, 4.7) .. (6,3.6) node[right] {\tiny $q_4(t)$};

\draw (1.1,3.4) node[left] {\tiny $q_6$}.. controls (3, 4) .. (5,3.4) node[right] {\tiny $q_5$};

\draw[dashed] (1.3,3) node[left] {\tiny $q_6(t)$}.. controls (3, 3.5) .. (5.5,2.8) node[right] {\tiny $q_5(t)$};

 \begin{scope} 
 \clip (0.6,4.15) node[left] {\tiny $q_1=q_1(t)$}.. controls (2.3, 5) .. (3,6.64) node[above] {\tiny $q_2=q_2(t)$}  .. controls (3.6, 6.7) ..  (4,6.69) node[above] {\tiny $q_3$}.. controls (4.4, 4.7) .. (5.3,3.8) -- (5,3.4) .. controls (3, 4) .. (1.1,3.4) -- (0.6, 4.15);

\draw (0.6,4.15) circle (0.2cm);
\draw (3,6.64) circle (0.2cm);
\draw (4,6.69) circle (0.2cm);
\draw (5.3,3.8) circle (0.2cm);
\draw (5,3.4) circle (0.2cm);
\draw (1.1,3.4) circle (0.2cm);

\end{scope}

\draw (0.7,4.1) circle (0.01cm);
\draw (3.1,6.55) circle (0.01cm);
\draw (3.9,6.6) circle (0.01cm);
\draw (5.2,3.78) circle (0.01cm);
\draw (4.95,3.5) circle (0.01cm);
\draw (1.15,3.5) circle (0.01cm);

\draw (1,6.1) node {\tiny $\gamma_{a,t}$};
\draw (6.2,6) node {\tiny $\gamma_{b,t}$};
\draw (3.5,0.3) node {\tiny $\gamma_{c,t}$};

\draw (2.2,5.6) node {\tiny $\beta_{a,t}$};
\draw (5.1,5.5) node {\tiny $\beta_{b,t}$};
\draw (3.5,3) node {\tiny $\beta_{c,t}$};

\end{tikzpicture}
\caption{Hexagons $H_0$ and $H_t$:
All lines are geodesics except $\beta_{a,b,c}$ which are curves of constant geodesic curvature.
}\label{fig_hexa}
\end{figure}

We set $c_5=\frac{w_{\bar{L}}}{4}$ and note that 
$c_5\leq \frac{w_{2i}}{4}< 
\frac{w_{2i}}{2}=\dist(\gamma_{i}, \partial \Col(\gamma_{i}))$ for $i\in \{a,b,c\}$ as we have assumed that $a_t,b_t,c_t\leq \frac{\bar L}2$.
We may thus consider the curve $\beta_{a,t}$ of all 
points in the collar $\Col(\gamma_{a,t})$ whose distance to $\partial \Col(\gamma_{a,t})$ is $c_5$, i.e. whose distance to $\gamma_{a,t}$ is equal to $\frac{w_{2a_t}}{2}-c_5$.
These curves $\beta_{a,t}$  meet the geodesic boundary curves $\Gamma_{b,t}$ and $\Gamma_{c,t}$ orthogonally in two points which we denote by $q_1(t)$ and $q_2(t)$.
The set $R_a$ introduced in the lemma then corresponds to the rectangular quadrangle with vertices $p_1,p_2, q_2,q_1$ whose boundary curves are geodesics except for $\beta_{a}$ along which the geodesic curvature is constant, but non-zero. 
We introduce the analogue notation also on the collars around $\gamma_{b,t}$ and $\gamma_{c,t}$, compare Figure~\ref{fig_hexa}, and denote by $\tilde H_t$ the 
inner rectangular hexagon.

We furthermore remark that the collar region $\Col(\gamma_{a,t})$ around $\gamma_{a,t}$ is isometric to 
$\big([0,X(2a_t))\times [0,\pi], \rho_{2a_t}(s)^2(ds^2+d\th^2)\big)$, $X,\rho$ as always given by \eqref{eq:rho-X}, 
as it corresponds to a quarter of a collar around a simple closed geodesic of length $2a_t$ in a closed hyperbolic surface.
In collar coordinates, the curve $\beta_{a,t}$ corresponds to the semicircle $\{Z_{c_5}(2a_t)\}\times [0,\pi]$, where $Z_{c}(\ell)= \frac{2\pi}{\ell}(\frac{\pi}{2}-Y_c(\ell))$ is characterised by 
$$c=\int_{Z_c(\ell)}^{X(\ell)} \rho(s) ds =\int^{\frac\pi2-\frac{\ell}{2\pi} Z_c(\ell)}_{\frac\pi2-\frac\ell{2\pi} X(\ell)}\tfrac{1}{\sin(t)}dt = \log\left(\tan(\thalf Y_c(\ell))\right)-\log\left(\tan(\thalf\arctan(\sinh(\tfrac{\ell}{2})))\right)
$$
and hence 
\beq\label{eq:Yc}
Y_c(\ell)= 2 \arctan \big( e^c \tan (\thalf Y_0(\ell))\big) \quad \text{where\ } Y_0(\ell)=\arctan \sinh (\tfrac{\ell}{2}).
\eeq
We now construct the desired diffeomorphism $F_t\colon H_0\to H_t$ as follows: 

On the rectangular subsets $R_{\cdot}$ which contain a boundary curve $\gamma_{\cdot}$ whose length is fixed, 
i.e. on $R_a$ and $R_b$ in the setting of \eqref{ass:hex-1} respectively only on $R_a$ in the setting of \eqref{ass:hex-2}, 
the diffeomorphism $F_t\colon R_a\to R_a(t)$ is uniquely determined by the condition that it is an isometry. 

The other rectangular sets $R_{\cdot}$
correspond to subsets of collars 
around geodesics of length  $\frac{1}2(\ell+t).$
Here we can choose $F_t\colon R_{b,c}\to R_{b,c}(t)$ explicitly e.g. as a linear map
$F_t(s,\theta)=(\tfrac{Z_{c_5}(\ell+t)}{Z_{c_5}(\ell)}\cdot s, \theta)$
in the corresponding collar coordinates, in which $R_.(t)$ is given by $[0,Z_{c_5}(\ell+t)]\times [0,\pi]$.

It hence remains to show that we can define $F_t$ on the inner hexagon $\tilde H_0$ in such a way that $F_t$ is smooth, so that the induced change of the metric 
$\frac{d}{dt}|_{t=0} F_t^*g_\Hyp=L_X g_{\Hyp}$ is of order $O(\ell)$ as required in \eqref{hex_inner} and so that $X$ satisfies the symmetry conditions from \eqref{hex_sym}.
The main step to prove this is to show that 
 the change of all geometric quantities characterising $\tilde H_t$, i.e. the distances $\text{dist}(q_i(t),q_{i+1}(t))$, $i=1,\ldots, 6$ (where we set $q_7=q_1$) as well as the geodesic curvatures of those boundary curves $\beta_{a,b,c}$ of $\tilde H_t$ which are not geodesics, is of order $O(\ell)$:
 
  This trivially holds true for the curves $\beta_{a,t}$, 
 and  in case of \eqref{ass:hex-1} also for $\beta_{b,t}$,
 since $a_t$ is constant and hence these curves are isometric to one-another.
  
For $\beta_{c,t}$ (and so in case of \eqref{ass:hex-2} by symmetry also for $\beta_{b,t}$) this can be seen as follows:
We write 
$\beta_{c,t}$ in collar coordinates as  $\{Z_{c_5}(\ell)\}\times [0,\pi]$,  $Z_{c_5}(\ell)=\tfrac{2\pi}{\ell}(\tfrac{\pi}{2}-Y_{c_5}(\ell))$ where $Y_{c_5}$ is given by \eqref{eq:Yc} and hence satisfies in particular 
\beq \label{eq:Yc-main}
Y_{c_5}(\ell)=e^{c_5}\tfrac{\ell}{2}+O(\ell^3) \text{ and } Y_{c_5}'(\ell)=e^{c_5}\tfrac{1}{2}+O(\ell^2).\eeq
As the geodesic curvature 
of a curve $\{s\}\times S^1$ in a collar $\Col(\si)$ around a geodesic of length $\ell$ is
 given by
\beqs\label{eq:geod-curv}
\kappa_g= \big\langle \na_{\rho^{-1}\tfrac{\partial}{\partial \theta}}(\rho^{-1}\tfrac{\partial}{\partial \theta}) , \rho^{-1}\tfrac{\partial}{\partial s}\big\rangle = \rho^{-1}\Gamma_{\th\th}^s=-\rho^{-2}(s)\rho'(s)=-\sin(\tfrac{\ell}{2\pi}s),
\eeqs
we thus obtain that the curvature 
$\kappa_{c,t}=-\sin(\tfrac{\ell+t}{2\pi} Z_{c_5}(\ell+t))=-\cos(Y_{c_5}(\ell+t))$
of
 $\beta_{c,t}$  
satisfies
$$\abs{\tfrac{d}{dt} \kappa_{{c,t}}}
=\sin(Y_{c_5}(\ell))\cdot \abs{\tfrac{d Y_{c_5}(\ell)}{d\ell}}\leq C
\ell,$$
where here and in the following all time derivatives are evaluated in $t=0$.
As the length of $\beta_{c,t}$ is $L(\beta_{c,t})=\pi\rho(Z_{c_5}(\ell+t))=\frac{\ell +t}{2\sin(Y_{c_5}(\ell +t))}$ also its change is 
only of order 
$$\babs{\tfrac{d}{dt} L(\beta_{c,t})}
=\babs{\frac{\sin( Y_{c_5}(\ell))-\ell Y_{c_5}'(\ell) \cos (Y_{c_5}(\ell +t))}{2\sin^2( Y_{c_5}(\ell))}}\leq C\ell,
$$
compare \eqref{eq:Yc-main}. 

The change of the lengths of the sides of $\tilde H_t$ which are subsets of the boundary curves $\Gamma_{a,b,c}$ of the original hexagon, and hence geodesics, can be computed using standard formulas from hyperbolic trigonometry as found e.g. in  \cite[p.454]{Buser} as we explain in the following:

We first consider the case \eqref{ass:hex-1} in which both $a_t$ and $b_t$ are constant. 
Here we may use 
$$ \cosh (L(\Gamma_{c,t}))= \frac{\cosh c_t + \cosh a\cosh b}{\sinh a\sinh b}$$
to express the length of the side opposite $\gamma_{c,t}$ as 
\beqas 
L(\Gamma_{c,t}) &= \log \left(\cosh c_t + \cosh a\cosh b + \sqrt{ (\cosh c_t + \cosh a\cosh b)^2-\sinh^2 a\sinh^2 b}\right)\\&\quad - \log (\sinh a\sinh b). \eeqas
We we note that the last term is constant, while the term in the square root is bounded away from zero by $\cosh^2 a\cosh^2 b-\sinh^2 a\sinh^2 b\geq 1$ so 
\begin{align}\label{eq:Gamma_deriv}\abs{\tfrac{d}{dt} L(\Gamma_{c,t}) }\leq C \sinh(c_t)\cdot \abs{\tfrac{d}{dt} c_t}\leq 
C\ell.\end{align}
Here and in the following $C$ denotes a generic constant that depends at most on the upper bound $\frac12\bar L$ on $a,b,c$.

As $\dist(q_i(t),p_i(t))=\frac{w}{2}-c_5$, $w$ the width of the corresponding collar, we have 
$$\dist(q_2(t),q_3(t)) = L(\Gamma_{c,t})+2c_5-\tfrac{w_a}{2}-\tfrac{w_b}{2}$$
and hence, still considering only the setting of \eqref{ass:hex-1} where $a,b$ and thus $w_a,w_b$ are constant, 
$$\abs{\tfrac{d}{dt} \dist(q_2(t),q_3(t))}= \abs{\tfrac{d}{dt} L(\Gamma_{c,t}) }\leq C\ell.$$
We now observe that the terms in
$\dist(q_4(t),q_5(t)) = L(\Gamma_{a,t})+2c_5-\tfrac{w_a}{2}-\tfrac{w_{c_t}}{2}$
are given by 
\beqas
L(\Gamma_{a,t}) &=\arsinh\bigg(\frac{\sinh L(\Gamma_{c,t}) \sinh a}{\sinh c_t}\bigg)
\\
&=\log\big[ \sinh L(\Gamma_{c,t})\sinh a + \sqrt{\sinh^2 L(\Gamma_{c,t}) \sinh^2 a + \sinh^2 c_t} \big] -\log\sinh c_t
\eeqas
respectively, using \eqref{eq:width}, by  
$$\tfrac{w_{c_t}}{2}=\arsinh(\sinh^{-1}c_t)=
\log\big[ 1+\sqrt{1+ \sinh^2 c_t}\big]-\log \sinh c_t.$$
The terms $\log\sinh c_t$, whose derivative would be of order $1$ rather than $\ell$, thus cancel and
\beqas 
\dist(q_4(t),q_5(t)) 
=&\ 
 -\log\big[ 1+\sqrt{1+ \sinh^2 c_t}\big] +2c_5-\frac{w_a}{2}
 +\log(\sinh L(\Gamma_{c,t}))
 +\log(\sinh a)
\\
& +\log\bigg[ 1+ \sqrt{1 +\sinh^2 c_t\sinh^{-2}(a)\sinh^{-2}( L(\Gamma_{c,t})) }\bigg],
\eeqas
where we note that $L(\Gamma_{c,t})\geq \frac{w_a}{2}$ and hence 
$\sinh(a)\sinh(L(\Gamma_{c,t}))\geq \sinh(a)\cdot \sinh(\frac{w_a}{2})=
1$.

Since $L(\Gamma_{c,t})\geq w_{\bar L}$ while  $\ddt L(\Gamma_{c,t})$  is controlled by \eqref{eq:Gamma_deriv},  we may thus estimate
$$\abs{\tfrac{d}{dt} \dist(q_4(t),q_5(t))}\leq C \cdot \sinh(c_t)+C\cdot \abs{\tfrac{d}{dt} L(\Gamma_{c,t})} \leq C\ell.$$
The same argument also applies for 
$\abs{\tfrac{d}{dt} \dist(q_6(t),q_1(t))}$.

This completes the proof that the change of all geometric quantities of $\tilde H_t$ are of order $O(\ell)$ in the case \eqref{ass:hex-1}. 
In the setting of \eqref{ass:hex-2} we may argue similarly, now additionally using the symmetry of the hexagon $H_t$, to obtain the analogue estimates. 

Returning to the construction of the diffeomorphism $F_t$ on $\tilde H_0$ we note that 
the derived bounds imply in particular that 
the 
points $q_3(t)$ and $q_6(t)$ in the hyperbolic plane move only by 
$\abs{\tfrac{d}{dt} q_3(t)}_{g_\Hyp}+\abs{\tfrac{d}{dt} q_6(t)}_{g_\Hyp}\leq C\ell$ 
along the geodesics $p_2p_3$ respectively $p_1p_6$ of the original hexagon. 
We will later choose the vector field $X$ along these sides of $\tilde H_0$ as
a suitable interpolation between $X(q_1)=0$ and $X(q_6)=O(\ell)$,
respectively $X(q_2)$ and $X(q_3)$, but before that consider the curves $\beta_{c,b}$, 
which we
parametrise by the corresponding collar coordinate $\theta$, i.e. 
proportionally to arc length $\abs{\beta'_{c,t} (\theta)}=\pi^{-1} L(\beta_{c,t})$ over the interval $[0,\pi]$. 
On these curves 
$F_t$ is already determined by the choice of $F_t$ on the rectangles $R_{b,c}$ which requires that 
$F_t(\beta_{c}(\theta))=\beta_{c,t}(\theta)$ etc. Hence $X\circ \beta_{c}= \tfrac{d}{dt}\vert_{t=0} \beta_{c,t}$, respectively $X\circ \beta_{b} = \tfrac{d}{dt}\vert_{t=0} \beta_{b,t}$.

While we can control the change of the metric $L_Xg$ induced by the diffeomorphisms by working directly in collar coordinates, to obtain the necessary extension of $X$ to the interior hexagon $\tilde H_t$ we want to interpolate between the vector fields 
$X\circ \beta_{a}\equiv 0$, $X\circ \beta_{b}$ and $X\circ \beta_{c}$. So we need $C^1$-bounds on $X\circ \beta_{b}$ and $X\circ \beta_{c}$ which we obtain as follows.
As $\beta_{c,t}$ has constant geodesic curvature $\kappa_{c,t}$, it is characterised by 
its initial data $\beta_{c,t}(0)=q_6(t)$ and $\beta_{c,t}'(0)$,
which points in direction of the interior normal of $\partial H_t$ at  $q_6(t)$ and has length $\pi^{-1} L(\beta_{c,t})$, and the ODE
$$\nabla_{\beta_{c,t}'} \beta_{c,t}'=\kappa_{c,t}\cdot \abs{\beta'_{c,t}} \cdot (\beta_{c,t}')^\perp.$$ 
Here we denote by $v^\perp$ the vector obtained by rotating $v$ by $\pi/2$, by $\cdot'$ 
 derivatives with respect to $\theta$ and by  $\na$ covariant derivatives in $(\mathbb{H},g_{\mathbb{H}})$. 
We can write this equation equivalently as 
$$\beta_{c,t}''= \pi^{-1} L(\beta_{c,t}) \kappa_{c,t}\cdot  (\beta_{c,t}')^\perp-\Gamma(\beta_{c,t})(\beta'_{c,t}, \beta'_{c,t}).$$
where $\Gamma(p)(v,w)\define \Gamma_{ij}^k(p)v^iw^j \frac{\partial}{\partial x^k}$ is given in terms of the Christoffel-symbols of the hyperbolic plane. 
Differentiation in time yields that the vector field 
$Y(\th)\define X(\beta_{c,t}(\theta))= \frac{d}{dt}|_{t=0}\beta_{c,t}(\theta)$ satisfies a  second order ODE 
of the form 
$$Y''=f_{c,1}(\th)+f_{c,2}(\th,Y,Y')$$
where the first term is bounded by 
$\abs{f_{c,1}}\leq C\abs{\frac{d}{dt}|_{t=0}\kappa_{c,t}} +C \abs{\frac{d}{dt}|_{t=0}L(\beta_{c,t})}\leq C\ell$, 
 while the second term is linear in $Y$ and $Y'$ and has Lipschitz-constant bounded by $ C (1+L(\beta_{c})^{2}) \leq C(\bar L).$ 
Rewriting this as a system of first order linear ODEs and applying Gronwall's inequality hence allows us to bound 
$$\abs{Y(\theta)}+\abs{Y'(\theta)}\leq C \ell+ C\cdot (\abs{Y(0)}+\abs{Y'(0)})\leq C\ell$$
for a constant $C$ that depends only on the upper bound $\bar L$ on the sidelengths $a,b,c$.
The same argument applies of course also for $X\circ \beta_{b}$.

Having thus determined the size of $X$ on the boundary curves $\beta_{a,b,c}$ of $\tilde H_0$ we finally remark that the lengths of the other sides of $\tilde H_0$ are bounded away from zero by the constant $2c_5>0$
that depends only on $\bar L$ as the collars are disjoint.
This allows us to extend $X$ to a smooth vector field on all of $H_t$, chosen with the symmetries from \eqref{hex_sym},
for which the $C^1$-norm on $\tilde H_0$ is bounded by a fixed multiple of 
the $C^1$-norm on $\beta_{a}\cup \beta_{b}\cup \beta_c$, i.e. by $C\ell$ as claimed in \eqref{hex_inner}.
\end{proof}
We can now finally prove the properties of the holomorphic quadratic differentials $\La^j$ that are dual to the (real) differentials of the Fenchel-Nielsen length coordinates.

\begin{proof}[Proof of Theorem~\ref{thm:La}]
Let $h^j\in T_g\M$ be a tensor  that induces the desired change \eqref{def:Psi_La} 
of the Fenchel-Nielsen coordinates as obtained in Corollary~\ref{cor:h}. 
As its projection $P_g^H(h^j)$
 onto the horizontal space $H(g)$
differs from $h^j$
only by a Lie-derivative, it induces the same change \eqref{def:Psi_La} of the Fenchel-Nielsen coordinates. The desired element $\La^j$ of $\Hol(M,g)$ is hence characterised uniquely by 
\beq \label{eq:La-h}
\Rea \La^j =P_g^H(h^j).\eeq
We then write 
\beq \label{eq:proof-lemma-La} 
\La^j-\tfrac12\Th^j= \sum_{k=1}^{3(\gamma-1)} \i c_k^j \Om^k\eeq
in terms of the bases $\{\Om^j\}$ and $\{\Th^j\}$ described in Proposition \ref{prop:RT-new} and Lemma~\ref{lemma:Om}.
By definition $\La^j$ and $\half\Th^j$ induce the same change of the length coordinates, namely 
 $d\ell_i(\La^j)=\de_{i}^j=d\ell_i(\tfrac12\Th^j)$,
so  \eqref{eq:length-evol} implies that 
 $\Rea(b_0^i(\La^j-\tfrac12\Th^j))=0$ for every $i=1,\ldots, 3(\gamma-1)$.
 Since $b_0^i(\Om^k)=0$ if $i\neq k$ while $b_0^k(\Om^k)>0$, we thus find that the coefficients $c_k^j$ in the above expression must all be real, as claimed in the theorem. 
 
As a first step towards the proof of the estimate \eqref{est:coeff-lemma-La} for the $c_k^j$ claimed in the theorem, we show that 
$$\bar c\define  \max_{k} \abs{c_k^j}\leq C\ell_j$$
 by comparing the $L^2$-norms of the two sides of 
\eqref{eq:proof-lemma-La}. 
Remark \ref{rmk:lower_lin_comb} implies that 
\beqa \label{est:proof-c-initial}
\norm{\La^j-\thalf \Th^j}_{L^2(M,g)}^2=
\bnorm{\sum_k \i c_k^j \Om^k}_{L^2(M,g)}^2
\geq &\, \eps_1^2\cdot \sum_k \abs{c_k^j}^2 \geq \eps_1^2 \cdot \ov{c}^2
\eeqa
where $\eps_1>0$ is the lower bound on $\abs{b_0^k(\Om^k)} \cdot \norm{dz^2}_{L^2(\Col(\si^k))}$ obtained in \eqref{est:RT-neg3-newest}.
Using \eqref{eq:La-h} as well as \eqref{eq:proof-lemma-La}
we may furthermore write 
\beqa \label{eq:proof-La-2}
\norm{\La^j-\frac12\Th^j}_{L^2(M,g)}^2&=2
\norm{\Rea(\La^j)-\tfrac12\Rea(\Th^j)}_{L^2(M,g)}^2\\
&=
-2\langle \Rea(\La^j-\tfrac12 \Th^j), \tfrac12\Rea(\Th^j)\rangle +2\langle \Rea(\La^j-\tfrac12\Th^j), P^H_g(h^j)\rangle\\
&=- \sum_k c_k^j \langle \Rea(\i\Om^k), \Rea(\Th^j)\rangle +
2\sum_k c_k^j \langle \Rea(\i\Om^k),h^j\rangle\\
&\leq C\cdot \bigg( \sum_k \abs{\langle \Rea(\i\Om^k), \Rea(\Th^j)\rangle}+\sum_k \abs{\langle \Rea(\i\Om^k), h^j\rangle}\bigg)\cdot \bar c.
\eeqa
Once we bound the above sums, we thus obtain a bound that is linear in $\bar c$ which, combined with the bound on $\bar c^2$ that results from \eqref{est:proof-c-initial}, will allow us to bound $\bar c$.

To deal with the first sum, we 
note that the term with $k=j$ vanishes since 
 $\Th^j$ is a \textit{real} multiple of $\Om^j$ and since $\langle \Rea(\Om^j),\Rea(\i\Om^j)\rangle=0$, compare \eqref{eq:Re_inner_prod}.
 To estimate the other terms in the first sum 
we use the estimate \eqref{est:RT-neg4-new} on  $\Om^j=-\Th^j\norm{\Th^j}_{L^2(M,g)}^{-1}$ from Proposition~\ref{prop:RT-new} and the bound
\eqref{est:Th-L2-upper} on $\norm{\Th^j}_{L^2(M,g)}$ from Lemma \ref{lemma:Om}  to obtain that for every $k\neq j$ 
\beqas 
\abs{\langle \Rea(\i\Om^k), \Rea(\Th^j)\rangle}=
\norm{\Th^j}_{L^2(M,g)}\cdot \abs{\langle \Rea(\i\Om^k), \Rea(\Om^j)\rangle}\leq C \ell_j^{-1/2} \ell_j^{3/2}\ell_k^{3/2} \leq C\ell_j.
\eeqas
To estimate the terms in the second sum in \eqref{eq:proof-La-2} we 
first consider the subset 
$\Col_{c_5}(\si^j)$ of the collar $\Col(\si^j)$ introduced in Corollary~\ref{cor:h} on which $h^j$ is of the form \eqref{eq:form-h-collar}. 
We recall that horizontal tensors are trace-free and observe that 
for every circle  $\{s\}\times S^1$ contained in this set and every $k$
we obtain 
\beqas
\int_{\{s\}\times S^1}
\langle \Rea(\i \Om^k), h^j\rangle d\th& =
\sum_{n} \int_{S^1}\Rea \big({b_n^j(\i\Om^k)} e^{n(s+\i\th)}\big)\cdot  \xi_1(s) 2\rho^{-4}(s) d\th \\
&=-2\xi_1(s)\rho^{-4}(s)\text{Im}(b_0^j(\Om^k))2\pi=0
\eeqas  
as the principal parts of elements $\Om^k$ are real.
In particular $\langle  \Rea(\i \Om^k), h^j\rangle_{L^2(\Col_{c_5}(\si^j))}=0$ .

As Corollary \ref{cor:h} ensures that $\supp(h^j)\setminus \Col_{c_5}(\si^j)\subset \tilde \eta\thick (M,g)$, 
we may thus combine the bounds \eqref{est:h_L1}
for $h^j$ and \eqref{est:Linfty_Om_thick} for $\Om^k$ that are valid on this set to estimate 
\beqa \label{fast_fertig}
\abs{\langle  \Rea(\i \Om^k), h^j\rangle_{L^2(M,g)}}&=
\abs{\langle  \Rea(\i \Om^k), h^j\rangle_{L^2(\supp(h^j)\setminus \Col_{c_5}(\si^j) ,g)}}\\
&
\leq  \norm{h^j}_{L^1(M\setminus \Col_{c_5}(\si^j))} \cdot  \norm{\Rea \Om^k}_{L^\infty(\tilde \eta\thick(M,g))}\\  
&\leq C\ell_j \ell_k^{3/2}.
\eeqa
Hence also the second sum in 
 \eqref{eq:proof-La-2} is bounded by 
  $C\ell_j$ so we obtain 
  that $\norm{\La^j-\half \Th^j}_{L^2}^2\leq C\ell_j\bar c$. Combined with \eqref{est:proof-c-initial} this gives  
an initial bound of $\max_k\, |c_k^j|=\ov{c}\leq C\ell_j.$

For indices $k$ for which $\ell_k$ is bounded away from zero by some fixed constant this already yields the bound on $c_k^j$ claimed in the theorem.
For any other index, we multiply
\eqref{eq:proof-lemma-La} with $\i\Om^k$ to get
 \beqas 
\abs{c_k^j}\leq &   \half \abs{\Rea \langle \Th^j, \i\Om^k\rangle}+2\abs{\langle \Rea(\La^j),\Rea(\i\Om^k)\rangle} +\bar c \sum_{i\neq k} \abs{\langle\Om^i,\Om^k\rangle}
\\
\leq &C\ell_j \ell_k^{3/2}+ 2 \abs{\langle h^j,\Rea(\i\Om^k)\rangle}+
 C\ell_j \sum_{i\neq k}\ell_k^{3/2}  \ell_i^{3/2} 
 \leq  C\ell_j \ell_k^{3/2},
\eeqas 
see \eqref{fast_fertig} for the last step.
Having thus established \eqref{est:coeff-lemma-La}, we finally  remark that \eqref{est:La-minus-Th} is an immediate consequence of this estimate and the bound \eqref{est:Linfty_Om_thick} on $\norm{\Om^k}_{L^\infty(M,g)}$.
\end{proof}

\appendix

\section{Appendix:}\label{appendix:collar}
We will need the following `Collar lemma' throughout the paper.

\begin{lemma}[Keen-Randol \cite{randol}] \label{lemma:collar}
Let $(M,g)$ be a closed oriented hyperbolic surface and let $\si$ be a simple closed geodesic of length $\ell$. Then there is a neighbourhood $\Col(\si)$ around $\si$, a so-called collar, which is isometric to 
$\big((-X(\ell),X(\ell))\times S^1, \rho^2(s)(ds^2+d\theta^2)\big)$
where 
\beq \label{eq:rho-X} 
\rho(s)=\rho_\ell(s)=\frac{\ell}{2\pi \cos(\frac{\ell s}{2\pi})} 
\qquad\text{ and }\qquad  
X(\ell)=\frac{2\pi}{\ell}\left(\frac\pi2-\arctan\left(\sinh\left(\frac{\ell}{2}\right)\right) \right).\eeq
\end{lemma}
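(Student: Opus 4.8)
The plan is to pass to the universal cover, to realise a suitable neighbourhood of $\si$ in $M$ as the quotient of a metric tube around a lift of $\si$ in the hyperbolic plane $(\Hyp,g_\Hyp)$, and then to read off the induced metric by writing it first in Fermi coordinates based on $\si$ and, after a conformal change of the normal coordinate, in the isothermal form asserted in the lemma. So first I would fix a complete geodesic $\tilde\si\subset\Hyp$ that covers $\si$ and recall that its stabiliser in the Fuchsian group $\Ga\cong\pi_1(M)$ is the infinite cyclic group $\langle A\rangle$ generated by the primitive hyperbolic isometry $A$ with axis $\tilde\si$ and translation length $\ell$ (as $M$ is closed every non-trivial element of $\Ga$ is hyperbolic, and as $M$ is oriented no element of $\Ga$ reverses $\tilde\si$, since that would force torsion). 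For $r>0$ the tube $U_r\define\{p\in\Hyp:\dist(p,\tilde\si)<r\}$ is $\langle A\rangle$-invariant, so $U_r/\langle A\rangle$ is an open annular neighbourhood of the image of $\tilde\si$ in the hyperbolic cylinder $\Hyp/\langle A\rangle$, and $\Col(\si)$ will be taken to be the image of $U_{r_0}$ in $M$ for $r_0\define\arsinh\big(1/\sinh(\tfrac{\ell}{2})\big)$.

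The only non-elementary point, and what I expect to be the main obstacle, is to show that this image is embedded, i.e.\ that the covering projection $\Hyp/\langle A\rangle\to M$ is injective on $U_{r_0}/\langle A\rangle$, equivalently that $g\,U_{r_0}\cap U_{r_0}=\emptyset$ for every $g\in\Ga\setminus\langle A\rangle$. This is precisely the classical Collar Lemma of Keen and Randol, and it is here that the simplicity of $\si$ enters: since $\si$ has no self-intersection, for $g\notin\langle A\rangle$ the distinct geodesics $\tilde\si$ and $g\tilde\si$ cannot cross and are therefore disjoint, and then the standard hyperbolic-trigonometric estimate underlying the Collar Lemma — which bounds the distance between the axes of two hyperbolic elements of a discrete torsion-free group ($A$ and $gAg^{-1}$, both of translation length $\ell$) from below in terms of their translation lengths — yields $\dist(\tilde\si,g\tilde\si)\geq 2r_0$, so that the tubes of half-width $r_0$ about the two axes are disjoint. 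I would simply quote this estimate from \cite{randol} (see also \cite[Ch.~4]{Buser}) rather than reprove it.

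Granting this, it remains to compute the metric on $U_{r_0}/\langle A\rangle$, which is routine. In Fermi coordinates $(r,t)$, with $t\in\R/\ell\Z$ the arclength parameter along $\si$ and $r\in(-r_0,r_0)$ the signed normal distance, one has $g_\Hyp=dr^2+\cosh^2(r)\,dt^2$. Putting $\theta\define\tfrac{2\pi}{\ell}t\in S^1$ and introducing a new normal coordinate $s=s(r)$ determined by $\tfrac{ds}{dr}=\tfrac{2\pi}{\ell\cosh r}$, that is
\[
s(r)=\tfrac{2\pi}{\ell}\int_0^r\frac{d\tau}{\cosh\tau}=\tfrac{2\pi}{\ell}\arctan(\sinh r),
\]
one obtains $\cosh r=1/\cos\big(\tfrac{\ell s}{2\pi}\big)$ and hence
\[
g_\Hyp=dr^2+\tfrac{\ell^2}{4\pi^2}\cosh^2(r)\,d\theta^2=\frac{\ell^2}{4\pi^2\cos^2(\tfrac{\ell s}{2\pi})}\big(ds^2+d\theta^2\big)=\rho_\ell(s)^2\,(ds^2+d\theta^2)
\]
with $\rho_\ell$ as in \eqref{eq:rho-X}. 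Finally, as $r$ runs over $(-r_0,r_0)$ the variable $s$ runs over $(-X(\ell),X(\ell))$ with
\[
X(\ell)=\tfrac{2\pi}{\ell}\arctan(\sinh r_0)=\tfrac{2\pi}{\ell}\arctan\big(1/\sinh(\tfrac{\ell}{2})\big)=\tfrac{2\pi}{\ell}\Big(\tfrac{\pi}{2}-\arctan\big(\sinh(\tfrac{\ell}{2})\big)\Big),
\]
where I used $\sinh r_0=1/\sinh(\tfrac{\ell}{2})$ and $\arctan(1/x)=\tfrac{\pi}{2}-\arctan x$ for $x>0$; this is exactly the value of $X(\ell)$ in \eqref{eq:rho-X}, which would complete the argument.
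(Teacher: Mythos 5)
Your argument is correct and is the standard proof of the Keen--Randol Collar Lemma; the paper itself offers no proof but simply quotes the result from \cite{randol}, so there is nothing to compare against beyond noting that you have correctly isolated the one non-routine step (the disjointness $g\,U_{r_0}\cap U_{r_0}=\emptyset$ for $g\in\Ga\setminus\langle A\rangle$, which is where simplicity of $\si$ and discreteness enter) and deferred it to the classical estimate. The coordinate computation is also correct: your choice $\sinh r_0=1/\sinh(\tfrac{\ell}{2})$ reproduces the width relation \eqref{eq:width}, and the change of variables $s=\tfrac{2\pi}{\ell}\arctan(\sinh r)$ yields exactly $\rho_\ell$ and $X(\ell)$ as in \eqref{eq:rho-X}.
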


On collars we will always use the complex variable $z=s+\i\th$. 

We will use in particular the following properties of hyperbolic collars, and refer to 
\cite{Buser} as well as the appendices of \cite{RT-neg,RTZ,RT-horizontal} and the references therein for more information:

The width of a collar, i.e.~the distance $w_{\ell}\define \int_{-X(\ell)}^{X(\ell)} \rho(s) ds$ between the two boundary curves, is related to the length $\ell$ of the central geodesic by 
\beq\label{eq:width}
\sinh \tfrac{w_{\ell}}{2} \sinh \tfrac{\ell}{2}=1.
\eeq

The injectivity radius of points on the boundary curves of a collar is at least $\arsinh(1)$ 
and as the injectivity radii and conformal factors $\rho$ are of comparable size at points with bounded (euclidean) distance, we hence have that
\beq 
\label{est:rho-ends-of-collar}
\pi \rho(s)\geq \inj_g(s,\th)\geq c_\La>0 \text{ for all } \abs{s}\in [X(\ell)-\La,X(\ell))
\eeq
with $c_\La>0$ depending only on $\La$, compare e.g. \cite[(A.7)-(A.9)]{RT-neg}.

In our analysis of holomorphic quadratic differentials we use repeatedly that on a collar $\Col(\si)$ around a geodesic of length $\ell\in (0,2\arsinh (1))$ we have
\beqa
\label{est:sizes_on_collars}
|dz^2|_g=2\rho^{-2};
\qquad
& \|dz^2\|_{L^1(\Col(\si))}=8\pi X(\ell);\\
\qquad
\|dz^2\|_{L^\infty(\Col(\si))}=\frac{8\pi^2}{\ell^2};
\qquad & \|dz^2\|_{L^2(\Col(\si))}^2=\frac{32\pi^5}{\ell^3}-\frac{16\pi^4}{3}+O(\ell^2),
\eeqa
where norms on $\Col(\si)$ are always computed with respect to 
$g=\rho^2(ds^2+d\th^2)$.

We also remark that for every $\bar L$ there exists a constant $c_1=c_1(\bar L)>0$ so that if $\ell<\bar L$ then
\beq \label{est:dz-lower} \|dz^2\|_{L^2(\Col(\si))}\geq c_1\eeq
while an upper bound of the form 
\beq
\label{est:dz-L2-upper} 
\|dz^2\|_{L^2(\Col(\si))}\leq C\ell^{-3/2}\eeq
 holds true for a universal constant $C$.

As the principal part is orthogonal to the collar decay part we may combine the above estimates with 
\beq 
\label{est:b0-trivial} 
\abs{b_0(\Upsilon,\Col(\si))}\cdot \norm{dz^2}_{L^2(\Col(\si))}\leq \norm{\Upsilon}_{L^2(\Col(\si))}\leq \norm{\Upsilon}_{L^2(M,g)}
\eeq 
to obtain a trivial upper bound for the coefficient of the principal part on collars 
of 
\beq 
\label{est:b0-trivial-small} 
\abs{b_0(\Upsilon,\Col(\si))}\leq C \ell^{3/2} \norm{\Upsilon}_{L^2(M,g)}
\eeq 
so in particular 
 \beq 
\label{est:b0-trivial-2} 
\abs{b_0(\Upsilon,\Col(\si))}\leq C(\bar L) \norm{\Upsilon}_{L^2(\Col(\si))}\leq C(\bar L) \norm{\Upsilon}_{L^2(M,g)}
\eeq
for collars around geodesics of bounded length $L_g(\si)\leq \bar L$.

{\sc Mathematisches Institut, Universit{\"a}t Freiburg, 79104 Freiburg, Germany }\\
{\sc Mathematical Institute, University of Oxford, Oxford, OX2 6GG, UK}


\begin{thebibliography}{99}
\parskip 1pt
\setlength{\itemsep}{1pt plus 0.3ex}

\bibitem{Buser}
P. Buser:  \emph{Geometry and spectra of compact Riemann surfaces.} Progress
in Mathematics, \textbf{106}, (Birkh\"{a}user Boston, Inc., Boston,
MA, 1992).

\bibitem{eigenvalue} N.~Gro{\ss}e and M.~Rupflin: \emph{Sharp eigenvalue estimates on degenerating surfaces.}  \url{https://arxiv.org/abs/1701.08491}

\bibitem{Hu} C. Hummel: \emph{Gromov's compactness theorem for
pseudo-holomorphic curves.} Progress in Mathematics, 151,
Birkh\"{a}user Verlag, Basel, (1997).

\bibitem{Masur} H. Masur: \textit{Extension of the Weil-Petersson metric to the boundary of Teichm\"uller space.} 
Duke Math. J. {43}, 623--635  (1976).
 
\bibitem{Mazzeo-Swoboda} R. Mazzeo and J. Swoboda: \emph{Asymptotics of the Weil-Petersson metric.}
IMRN 2017, 1749--1786.

\bibitem{randol} B. Randol: \emph{Cylinders in Riemann surfaces.\/}
Comment. Math. Helvetici {54}, 1--5  (1979).

\bibitem{RT2} M. Rupflin and P.M. Topping: 
\emph{A uniform Poincar\'e estimate for quadratic differentials on closed surfaces.\/} 
Calc. Var. Partial Differential Equations {53}, 587--604  (2015).


 \bibitem{RT-neg} M. Rupflin and P.M. Topping:
 \emph{Teichm\"uller harmonic map flow into nonpositively curved targets.} To appear in J. Differential Geometry,
 \url{https://arxiv.org/abs/1403.3195}

\bibitem{RTZ} M. Rupflin, P.M. Topping and M. Zhu:
 \textit{Asymptotics of the Teichm\"uller harmonic map flow.} 
 Advances in Math. {244}, 874--893  (2013).
 
 \bibitem{RT-horizontal}  M. Rupflin and P.M. Topping: \emph{Horizontal curves of hyperbolic metrics.} \url{https://arxiv.org/abs/1605.06691}.
 
\bibitem{Wolpert82} S. Wolpert: \emph{The {F}enchel-{N}ielsen deformation.} Ann. of Math. 3, 501--528 (1982).

\bibitem{WII} S. Wolpert: \emph{Spectral limits for hyperbolic surfaces.  {II}}, Invent. Math. 108(1), 91--129 (1992).


\bibitem{Wolpert03} S. Wolpert: \emph{Geometry of the Weil-Petersson completion of Teichm\"uller space.} In 
Surveys in Differential Geometry, Vol. VIII, 357--393 (2003).

\bibitem{Wolpert08} S. Wolpert: 
\emph{Behavior of geodesic-length functions on Teichm\"uller space.} J. Differential Geom. 79, 277--334 (2008).

\bibitem{Wolpert12}  S. Wolpert: \emph{Geodesic-length functions and the Weil-Petersson curvature tensor.}  J. Differential Geom. {91}, 321--359 (2012).

\bibitem{Yamada1} S. Yamada: \emph{On the Weil-Petersson Geometry of Teichm\"uller Spaces.} Math. Research Letters  {11}(3),  327--344  (2004).

\bibitem{Yamada2} S. Yamada: \emph{Local and Global Aspects of Weil-Petersson Geometry.} Handbook of Teichm\"uller Theory IV, 
EMS (2014), \url{https://arxiv.org/abs/1206.2083v2}.


\end{thebibliography}
\end{document}